\documentclass{amsart}


\usepackage{amsmath,amssymb,amsfonts,amsthm}
\usepackage{enumerate}
\usepackage{array}
\usepackage{mathrsfs}
\usepackage{csquotes}
\usepackage{fullpage}
\usepackage{graphicx}
\usepackage[all]{xy}
\usepackage{ latexsym }
\usepackage{mathtools}
\usepackage{tikz}


\usepackage{color}
\definecolor{Chocolat}{rgb}{0.36, 0.2, 0.09}
\definecolor{BleuTresFonce}{rgb}{0.215, 0.215, 0.36}

\usepackage[colorlinks,final,hyperindex]{hyperref}
\hypersetup{citecolor=BleuTresFonce, linkcolor=Chocolat}

\newcommand{\Grav}{\mathcal{G}\textit{rav}\,}

\newcommand{\M}{\mathcal{M}}
\newcommand{\Mbar}{\overline{\mathcal{M}}}
\newcommand{\Mdelta}{\mathcal{M}^\delta}
\newcommand{\K}{\mathcal{K}}

\newcommand{\Q}{\mathbb{Q}}
\newcommand{\C}{\mathbb{C}}

\renewcommand{\leq}{\leqslant}
\renewcommand{\geq}{\geqslant}

\newcommand{\isom}{\simeq}
\newcommand{\tr}{\mathfrak{t}}
\newcommand{\di}{\mathfrak{d}}
\newcommand{\calC}{\mathcal{C}}


\newtheorem*{thma}{Theorem A}
\newtheorem*{thmb}{Theorem B}

\theoremstyle{theorem}
\newtheorem{thm}{Theorem}[section]
\newtheorem{prop}[thm]{Proposition}
\newtheorem{lem}[thm]{Lemma}
\newtheorem{coro}[thm]{Corollary}

\theoremstyle{remark}
\newtheorem{rem}[thm]{Remark}
\newtheorem{ex}[thm]{Example}

\theoremstyle{definition}
\newtheorem{defi}[thm]{Definition}


\title{Brown's moduli spaces of curves and the gravity operad}


\subjclass[2010]{Primary 14H10; Secondary 14C30, 18D50}


\keywords{Moduli spaces of genus zero curves, operads, mixed Hodge structures}

\thanks{B.V. was supported by the ANR SAT grant. }

\author{Cl\'{e}ment Dupont}
\address{ Institut Montpelli\'{e}rain Alexander Grothendieck, Universit\'{e} de Montpellier, CNRS, UMR 5149,
Place Eug\`{e}ne Bataillon, 
34090 Montpellier, France.}
\email{clement.dupont@umontpellier.fr}

\author{Bruno Vallette}
\address{Laboratoire Analyse, G\'eom\'etrie et Applications, Universit\'e Paris 13, Sorbonne Paris Cit\'e, CNRS, UMR 7539, 93430 Villetaneuse, France.}
\email{vallette@math.univ-paris13.fr}


\begin{document}

\begin{abstract}
This paper is built on the following observation: the purity of the mixed Hodge structure on the cohomology of Brown's moduli spaces is essentially equivalent to the freeness of the dihedral operad underlying the gravity operad. We prove these two facts by relying on both the geometric and the algebraic aspects of the problem: the complete geometric description of the cohomology of Brown's moduli spaces and the coradical filtration of cofree cooperads. This gives a conceptual proof of an identity of Bergstr\"om--Brown which expresses the Betti numbers of Brown's moduli spaces via the inversion of a generating series. This also generalizes the Salvatore--Tauraso theorem on the nonsymmetric Lie operad. 
\end{abstract}

\maketitle

\setcounter{tocdepth}{1}
\tableofcontents

\section*{Introduction}

	The moduli space of genus zero smooth curves with~$n$ marked points, denoted by~$\M_{0,n}$, is a classical object in algebraic geometry, as well as its Deligne--Mumford--Knudsen compactification~$\Mbar_{0,n}$, which parametrizes stable genus zero curves with~$n$ marked points. In~\cite{brownPhD}, Brown introduced a \enquote{partial compactification}
	$$\M_{0,n}\subset\Mdelta_{0,n}\subset\Mbar_{0,n}$$
	in order to prove a conjecture of Goncharov and Manin~\cite{goncharovmanin} on the relation between certain period integrals on~$\Mbar_{0,n}$ and multiple zeta values.

\smallskip

	The homology groups of the moduli spaces~$\M_{0,n}$, as well as those of the compactified moduli spaces~$\Mbar_{0,n}$, assemble to form two operads, respectively called the gravity and hypercommutative operads by Getzler. These two operads are Koszul dual in the sense of the Koszul duality of operads; see Getzler ~\cite{getzlermodulispacesgenuszero} and Ginzburg and Kapranov ~\cite{ginzburgkapranov}. As pointed out by Getzler, this is very much related to the purity of the mixed Hodge structures on the cohomology groups under consideration. This implies that the exponential generating series encoding the Betti numbers of~$\M_{0,n}$ and~$\Mbar_{0,n}$ are inverse to one another.

\smallskip

	A similar identity was conjectured by Bergstr\"om and Brown in~\cite{bergstrombrown}: the ordinary generating series encoding the Betti numbers of the moduli spaces~$\M_{0,n}$ and~$\M^\delta_{0,n}$ should be inverse to one another. More precisely, it is showed how such a relation can be derived from a more conceptual fact: the purity of the mixed Hodge structure on the cohomology groups of Brown's moduli spaces. This is the first result of the present paper. 

	\begin{thma}
	For every integers~$k$ and~$n$, the mixed Hodge structure on the cohomology group~$H^k(\Mdelta_{0,n})$ is pure Tate of weight~$2k$.
	\end{thma}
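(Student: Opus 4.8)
The plan is to reduce the purity statement to an exactness property of a family of Gysin complexes, via the weight spectral sequence of the smooth projective compactification $\Mbar_{0,n}$, and then to prove that exactness on the algebraic side, where it becomes the freeness of the dihedral operad underlying the gravity operad.

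\emph{Reduction.} Write $\Mdelta_{0,n}=\Mbar_{0,n}\setminus Z$, where $Z$ is the union of the boundary divisors $D_{S}$ indexed by the subsets $S$ of marked points that are \emph{not} consecutive for the cyclic order underlying $\delta$; the divisor $Z$ has normal crossings, and, writing $Z^{[p]}$ for the disjoint union of the $p$-fold intersections of its components (with $Z^{[0]}=\Mbar_{0,n}$), every $Z^{[p]}$ is a disjoint union of products of Deligne--Mumford spaces $\Mbar_{0,m}$, hence is smooth projective with cohomology pure of Tate type and of weight equal to the cohomological degree. Deligne's weight spectral sequence for the open immersion $\Mdelta_{0,n}\hookrightarrow\Mbar_{0,n}$,
$$E_{1}^{-p,q}=H^{q-2p}\bigl(Z^{[p]}\bigr)(-p)\ \Longrightarrow\ H^{q-p}\bigl(\Mdelta_{0,n}\bigr),$$
degenerates at $E_{2}$, and $E_{2}^{-p,q}=\gr^{W}_{q}H^{q-p}(\Mdelta_{0,n})$. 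Each term $E_{1}^{-p,q}$ is pure of weight $q$, so $\gr^{W}_{q}H^{m}(\Mdelta_{0,n})$ is the cohomology in position $m-q$ of the complex $(E_{1}^{\bullet,q},d_{1})$, whose differentials are the alternating sums of Gysin maps. Therefore $H^{k}(\Mdelta_{0,n})$ is pure Tate of weight $2k$ for all $k$ and $n$ if and only if, for each $n$ and each $r\geq1$, the complex
$$0\longrightarrow H^{0}\bigl(Z^{[r]}\bigr)(-r)\longrightarrow H^{2}\bigl(Z^{[r-1]}\bigr)(-r+1)\longrightarrow\cdots\longrightarrow H^{2r-2}\bigl(Z^{[1]}\bigr)(-1)\longrightarrow H^{2r}\bigl(\Mbar_{0,n}\bigr)\longrightarrow0$$
is exact at every term except the leftmost one, in which case $H^{r}(\Mdelta_{0,n})$ is the kernel of its first map and is tautologically pure Tate of weight $2r$.

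\emph{Operadic reformulation and the main input.} Under the dictionary between the stratification of $\Mbar_{0,n}$ by boundary divisors and stable trees, together with Getzler's description of $\{H_{\bullet}(\Mbar_{0,n})\}_{n}$ and $\{H_{\bullet}(\M_{0,n})\}_{n}$ as the Koszul dual hypercommutative and gravity operads, the Gysin complexes of the Reduction step (assembled over all $n$ and $r$) are the geometric incarnation of the (co)bar resolution governing this Koszul duality, now carried out for the underlying \emph{dihedral} (co)operads; spelling out this identification --- including the matching of the geometric Gysin differentials, signs and all, with the cooperadic cocomposition --- is the \enquote{complete geometric description of the cohomology of Brown's moduli spaces}. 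Granting it, the required exactness away from the leftmost term is equivalent to the acyclicity, in positive homological degree, of the dihedral (co)bar construction, which in turn is equivalent to the freeness of the dihedral operad underlying the gravity operad --- the companion Theorem~B. We prove this last statement purely algebraically: one identifies the space of cogenerators sitting inside the dihedral cogravity cooperad, forms the comparison morphism to the cofree cooperad on those cogenerators, and shows by induction along the coradical filtration (which for a cofree cooperad is well understood) that this morphism is an isomorphism, so that no corelations appear at any stage.

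I expect the main obstacle to lie in this last input --- pinning down the correct cogenerators and carrying the coradical-filtration induction through; by contrast the geometric half (the normal-crossing structure of $Z$, Deligne's weight spectral sequence, and the combinatorial identification of the Gysin differentials with cocomposition) is comparatively routine. A secondary technical point is to fix once and for all the dihedral-operadic framework --- the cyclic and dihedral structures on $\{H_{\bullet}(\Mbar_{0,n})\}_{n}$ and the ambient category in which the (co)bar construction is performed --- so that the Koszul-duality statements and the sign conventions are unambiguous. Everything else reduces to bookkeeping with the weight spectral sequence and the Künneth formula.
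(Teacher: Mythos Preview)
Your overall strategy --- reduce purity to the exactness of an $E_1$-page via a spectral sequence, then prove exactness operadically via the coradical filtration --- is the paper's strategy. But the spectral sequence you choose is different from the paper's, and this difference makes your operadic reformulation incorrect as stated.

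The paper does \emph{not} use Deligne's weight spectral sequence for the smooth compactification $\Mdelta_{0,n}\hookrightarrow\Mbar_{0,n}$. It uses instead a \emph{residue spectral sequence} for the pair $(\Mdelta_{0,n},\partial\Mdelta_{0,n})$, whose $E_1$-terms are the cohomology groups of the \emph{open} strata $\M(\di)\cong\prod\M_{0,E(p)}$ indexed by \emph{dissections} (equivalently, dihedral trees). Since $H^k(\M_{0,m})$ is already pure of weight $2k$, one gets $E_2=E_\infty$, and the $q$-th row of $E_1$ is, after a shift and a twist, literally the dihedral cobar construction of the gravity cooperad. Purity of $H^k(\Mdelta_{0,n})$ is then equivalent to $E_2^{p,q}=0$ for $p>0$, which is exactly the freeness criterion for a dihedral cooperad in terms of its cobar construction. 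The coradical filtration argument you sketch is indeed what the paper uses to prove Theorem~B.

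Your spectral sequence, by contrast, has $E_1$-terms built from the cohomology of the \emph{compact} strata $Z^{[p]}\cong\coprod\prod\Mbar_{0,m}$, indexed by $S$-trees \emph{all of whose internal edges are non-dihedral}. This is neither indexed by dissections nor built from the gravity cooperad; it is closer to a piece of the cyclic cobar construction of the hypercommutative cooperad restricted to a peculiar class of trees. Your claim that this Gysin complex is ``the (co)bar resolution governing this Koszul duality, carried out for the underlying dihedral (co)operads'' is therefore not right: the dihedral cobar construction of gravity is the object the paper obtains from the \emph{other} stratification. The exactness you need for your complex does not follow from Theorem~B by any direct identification; it would need a separate argument (or a further Koszul-duality manipulation relating the two complexes) that you have not supplied. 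So the gap is in the bridge between your Reduction and your Operadic reformulation: the complex you wrote down and the complex whose acyclicity is equivalent to Theorem~B are different objects.
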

	
	This theorem has the following straightforward consequences:
	\begin{enumerate}[--]
	\item the cohomology algebra of Brown's moduli space~$\Mdelta_{0,n}$ embeds into that of the moduli space~$\M_{0,n}$ (Corollary~\ref{coroinj});
	\item there is a recursive formula for the Betti numbers of~$\Mdelta_{0,n}$, conjectured in Bergstr\"om and Brown ~\cite{bergstrombrown} (Corollary~\ref{corobetti});
	\item Brown's moduli spaces~$\Mdelta_{0,n}$ are formal topological spaces in the sense of rational homotopy theory (Corollary~\ref{coroformality}).
	\end{enumerate}
	
	It turns out that the purity of the mixed Hodge structure of Theorem A can be equivalently interpreted in the following operadic terms.

	\begin{thmb}
	The dihedral gravity operad is free. Its space of generators in arity~$n$ and degree~$k$ is (non-canonically) isomorphic to the homology group~$H_{k+n-3}(\Mdelta_{0,n})$.
	\end{thmb}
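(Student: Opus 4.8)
The plan is to establish the equivalence between Theorem B and Theorem A, and to deduce freeness from the algebraic structure of the gravity operad as a sub-object of a cofree cooperad. First I would recall that the gravity operad is, by Getzler's work, Koszul dual to the hypercommutative operad, so that its underlying dihedral collection sits inside the cobar construction of the hypercommutative cooperad; concretely, the homology $H_\bullet(\M_{0,n})$ (up to the degree shift $k \mapsto k+n-3$) is computed by a complex built from the cofree cooperad on the reduced homology of the spaces $\Mbar_{0,m}$. The key structural input is the \emph{coradical filtration} on a cofree cooperad: the associated graded of the gravity operad with respect to this filtration is naturally identified with a free operad, and the question becomes whether this filtration splits, i.e. whether the gravity operad is \emph{itself} free and not merely filtered-free.

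The main step is to translate the splitting of the coradical filtration into the purity statement of Theorem A. Here is the mechanism I would use: the spaces $\Mdelta_{0,n}$ fit between $\M_{0,n}$ and $\Mbar_{0,n}$, and the Gysin/residue spectral sequence (or the weight spectral sequence coming from the normal crossings boundary $\Mdelta_{0,n}\setminus\M_{0,n}$) computing $H^\bullet(\M_{0,n})$ has an $E_1$-page expressed in terms of the cohomology of Brown's moduli spaces and of the boundary strata. The differentials in this spectral sequence are precisely the cooperadic cobar differentials, so that the $E_2$-page (equivalently the $E_\infty$-page, by Theorem A forcing degeneration) records $H^\bullet(\Mdelta_{0,n})$ as the cohomology of the cobar-type complex. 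Dually, on homology, $H_{k+n-3}(\Mdelta_{0,n})$ is identified with a subquotient — in fact, after applying Theorem A, a direct summand — of the dihedral gravity operad, which is exactly the space of generators one reads off from the associated graded free operad. Thus Theorem A guarantees that the coradical filtration on the dihedral gravity operad splits, yielding a (non-canonical, filtration-splitting-dependent) isomorphism with the free dihedral operad on the collection $n \mapsto H_{k+n-3}(\Mdelta_{0,n})$.

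Concretely the steps are: \textbf{(1)} identify the dihedral gravity operad with the homology of the dihedral (or rather, following Brown, the "dihedral structure") version of the moduli operad, and exhibit it as a subobject of a cofree cooperad via Koszul duality with the hypercommutative cooperad; \textbf{(2)} set up the coradical filtration on this cofree cooperad and compute its associated graded as a free operad, whose generators in arity $n$ are the "primitive" part, a subquotient of $H_{k+n-3}$; \textbf{(3)} use the geometry of $\M_{0,n}\subset\Mdelta_{0,n}\subset\Mbar_{0,n}$ to build the weight/residue spectral sequence whose $E_1$-term is the cobar complex and whose abutment is $H^\bullet(\Mdelta_{0,n})$; \textbf{(4)} invoke Theorem A to conclude that this spectral sequence degenerates and that the relevant filtration splits, upgrading "filtered-free with those generators" to "free with those generators", and dualize to get the stated isomorphism of the generating collection with $H_{k+n-3}(\Mdelta_{0,n})$.

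I expect the main obstacle to be step \textbf{(3)}–\textbf{(4)}: precisely matching the purity statement of Theorem A with the splitting of the coradical filtration, i.e. checking that the weight grading on $H^\bullet(\M_{0,n})$ is exactly the grading induced by the coradical filtration on the gravity operad, and that Tate purity of $H^k(\Mdelta_{0,n})$ is not just a consequence but genuinely equivalent to this splitting. This requires a careful bookkeeping of weights: one must verify that each stratum contributes in the single weight predicted by the operadic grading, so that no room is left for higher-weight classes, which is what forces the filtration to be split rather than merely exhibiting a free associated graded. The identification of the generators with $H_{k+n-3}(\Mdelta_{0,n})$ itself should then follow formally from comparing Euler characteristics / Poincaré series on both sides, once freeness is known.
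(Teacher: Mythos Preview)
Your proposal correctly identifies the mechanism of Theorem~\ref{propequivalence} --- the residue spectral sequence for the stratification of $\Mdelta_{0,S}$ has $E_1$ equal (up to shift) to the dihedral cobar construction of the gravity cooperad, and purity of $H^k(\Mdelta_{0,S})$ is equivalent to exactness of the rows away from degree zero, which by Proposition~\ref{propfreecobar} is equivalent to cofreeness. But this is only the \emph{equivalence} of Theorem~A and Theorem~B; it is not a proof of either one. Your step~(4) says ``invoke Theorem~A'', yet you never supply an independent argument for Theorem~A, and in the paper's logical order Theorem~A is a \emph{corollary} of Theorem~B (Corollary~\ref{coropurity}), not an input. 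So the proposal is circular as written.

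Two more local issues. First, the direction of your spectral sequence is reversed: the paper's residue spectral sequence has $E_1$ built from the cohomology of the open strata $\M(\di)$ (products of $\M_{0,E_i}$), and it converges to $H^\bullet(\Mdelta_{0,S})$ --- not the other way around. Second, the Koszul duality with the hypercommutative operad, while true, is a red herring here: it governs the \emph{cyclic} (symmetric) structure and gives no leverage on the dihedral freeness question.

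What the paper actually does to break the circularity is entirely missing from your outline: it introduces a concrete combinatorial filtration on $\mathcal{C}(S,\delta)=H^{\bullet-1}(\M_{0,S})(-1)$, the \emph{residual filtration} $R_r$, defined using Brown's presentation of $H^\bullet(\M_{0,S})$ by chord monomials $\omega_{c_1}\wedge\cdots\wedge\omega_{c_k}$ (counting ``residual'' chords, i.e.\ those not crossed by any other). It then proves by hand (Theorem~\ref{thmmain}) that the associated graded decomposition map
\[
\mathrm{gr}^R_r\,\mathcal{C}(S,\delta)\;\longrightarrow\;\bigoplus_{\di\in\mathsf{Diss}_r(S,\delta)} R_0\mathcal{C}(\di)
\]
is an isomorphism, by constructing an explicit inverse via pullback along forgetful maps $\M_{0,S}\to\M_{0,S'}$. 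This verifies the hypotheses of the abstract freeness criterion Proposition~\ref{propfreefiltration}, giving cofreeness directly and identifying $R_0\mathcal{C}$ with the cogenerators. Only then does one read off, via the equivalence, that $R_0\mathcal{C}(S,\delta)\cong H^{\bullet-1}(\Mdelta_{0,S})(-1)$ and that Theorem~A holds. The identification of generators is thus not an Euler-characteristic afterthought but comes packaged with the construction.
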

	
	We introduce here the new notion of a dihedral operad, which faithfully takes into account the dihedral symmetry of Brown's moduli spaces. Such a notion forgets almost all the symmetry properties of a cyclic operad, except for the dihedral structure. Theorem B can also be viewed as a kind of nonsymmetric analog of the Koszul duality between the gravity and the hypercommutative operad, since a free operad is Koszul, its dual being a nilpotent operad. We prove it by introducing a combinatorial filtration on the cohomology groups of the spaces $\M_{0,n}$, and identifying it with the coradical filtration of the dihedral gravity cooperad.
	
	\smallskip
	
	The problem of studying whether the nonsymmetric operad underlying a given operad is free is not new. In~\cite{salvatoretauraso}, Salvatore and Tauraso proved that the nonsymmetric operad underlying the  operad of Lie algebras is free. This result is actually the top dimensional part of Theorem B. Thus, the geometric methods developed throughout this paper provide us with a new proof of (a dihedral enhancement of) the theorem of Salvatore and Tauraso. 
	
	\smallskip
	
	Note that in the preprint~\cite{almpetersen} (which appeared on the arXiv one day after the present article), Alm and Petersen give independent proofs of Theorem A and Theorem B. Their proofs rely on an explicit basis for the gravity cooperad, and a construction of Brown's moduli spaces in terms of blow-ups and deletions. The freeness of the (nonsymmetric) gravity operad has been used in~\cite{almBV} to study an exotic $A_\infty$-structure on Batalin--Vilkovisky algebras.

	\subsection*{Layout.} 
	The first section deals with the various combinatorial objects and notions of operads used in this text. In the second section, we introduce the moduli spaces of curves~$\M_{0,n}$ and~$\Mbar_{0,n}$, as well as the notion of mixed Hodge structure. The study of Brown's moduli spaces~$\M^\delta_{0,n}$ and the dihedral gravity cooperad fills the third section. The fourth section contains the proofs of Theorems A and B and their corollaries. 
	
	\subsection*{Conventions.} Throughout the paper, the field of coefficients is the field~$\Q$ of rational numbers. For a topological space~$X$, we simply denote by~$H_\bullet(X)$ and~$H^\bullet(X)$ the (co)homology groups of~$X$ with rational coefficients. We work with graded vector spaces and switch between the homological convention (with degrees as subscripts) and the cohomological convention (with degrees as superscripts), the two conventions being linear dual to one another.
	
	\subsection*{Acknowledgements.} We would like to express our sincere appreciation to Johan Alm, Francis Brown and Dan Petersen for useful discussions, and to the anonymous referee for their suggestions, which helped improve the clarity of this article. We would like to thank the Max-Planck-Institut f\"{ur} Mathematik (where the first author was holding a position and where the second author came during several visits) and the University Nice Sophia Antipolis (vice versa) for the excellent working conditions.

\section{Freeness criteria for dihedral cooperads}

	The purpose of this first section is to recall the various notions of operads (classical, cyclic, nonsymmetric, cyclic nonsymmetric) and to introduce a new one (dihedral operad) which suits the geometry of Brown's moduli spaces. We first describe the combinatorial objects (trees and polygon dissections) involved in the proof of the results of the paper. In the end of this section, we prove two freeness criteria for dihedral cooperads, one based on their cobar construction and the other based on their coradical filtration.

	\subsection{Dissections of polygons and trees}\label{sec:Diss}

		\begin{defi}[Structured sets] Let~$S$ be a finite set of cardinality~$n$.
		\begin{enumerate}[--]
		
		\item A \emph{basepoint}~$\rho$ on~$S$ is a map~$\rho : \{*\} \to S$. A pair~$(S,\rho)$ is called a \emph{pointed set}.

		\item A \emph{total order}~$\omega$ on~$S$ is a bijection between~$S$ and the set~$\{1,\ldots,n\}$. There are~$n!$ total orders on~$S$. A pair~$(S,\omega)$ is called a \emph{totally ordered set}. By convention, we view a totally ordered set as a pointed set, the basepoint being the maximal element.

		\item A \emph{cyclic structure}~$\gamma$ on~$S$ is an identification of~$S$ with the edges of an oriented~$n$-gon, modulo rotations. There are~$\frac{n!}{n}=(n-1)!$ cyclic structures on~$S$. A pair~$(S,\gamma)$ is called a \emph{cyclic set}.

		\item A \emph{dihedral structure}~$\delta$ on~$S$ is an identification of~$S$ with the edges of an unoriented~$n$-gon, modulo dihedral symmetries. There are~$\frac{n!}{2n}=\frac{1}{2}(n-1)!$ dihedral structures on~$S$. A pair~$(S,\delta)$ is called a \emph{dihedral set}.

		\end{enumerate}
		\end{defi}
		
		In the sequel, we will identify a dihedral set~$(S,\delta)$ with an unoriented polygon with its edges decorated by~$S$ in the dihedral order prescribed by~$\delta$.

		\begin{defi}[Chords and dissections]
		Let~$(S,\delta)$ be a dihedral set.
		 \begin{enumerate}[--]
		\item	A \emph{chord} of~$(S,\delta)$ is an unordered pair of non-consecutive vertices of the underlying unoriented polygon. 

		\item A \emph{dissection}~$\di$ of~$(S,\delta)$ is a (possibly empty) set of non-crossing chords. 
		The refinement of dissections endows them with a poset structure:
		$$ \di\leq \di' \quad \text{if} \quad \di \subset \di'\ ,~$$
		in which the smallest element is the empty dissection.
		We denote by~$\mathsf{Diss}(S,\delta)$ the poset of dissections of~$(S,\delta)$, and by~$\mathsf{Diss}_k(S,\delta)$ the subset consisting of dissections with~$k$ chords.
		\end{enumerate}
		\end{defi}
		
		For a dissection~$\di\in\mathsf{Diss}(S,\delta)$, we denote by~$P(\di)$ the set of sub-polygons that it defines, see Figure~\ref{figuredissection}. If~$\di$ is in~$\mathsf{Diss}_k(S,\delta)$, then~$P(\di)$ has cardinality~$k+1$.	 A sub-polygon~$p\in P(\di)$ corresponds to a dihedral set that we denote by~$(E(p),\delta(p))$, where~$E(p)$ consists of edges and chords of the polygon~$(S,\delta)$.
		
		\begin{figure}[h!!]
		\def\svgwidth{.28\textwidth}
		\begin{center}
\begin{tikzpicture}[scale=0.3]

\draw[thick] (0:10) -- (30:10) -- (60:10) -- (90:10) -- (120:10) -- (150:10) -- (180:10) 
-- (210:10) -- (240:10) -- (270:10) -- (300:10) -- (330:10) -- (360:10);

\draw[thick, blue] (60:10) -- (300:10) node[midway, below left] {\scalebox{1}{$\mathbf{c_3}$}};
\draw[thick, blue] (90:10) -- (270:10) node[midway, below left] {\scalebox{1}{$\mathbf{c_2}$}};
\draw[thick, blue] (150:10) -- (270:10) node[midway, left] {\scalebox{1}{$\mathbf{c_1}$}};

\node[purple] at (10:7.7) {\scalebox{1}{$\mathbf{p_3}$}};
\node[purple] at (28:2.7) {\scalebox{1}{$\mathbf{p_2}$}};
\node[purple] at (140:4) {\scalebox{1}{$\mathbf{p_1}$}};
\node[purple] at (210:8) {\scalebox{1}{$\mathbf{p_0}$}};
\end{tikzpicture}
		\end{center}
		\caption{A dissection~$\di=\{c_1,c_2,c_3\}$, with the set of sub-polygons~$P(\di)=\{p_0,p_1,p_2,p_3\}$.}\label{figuredissection}
		\end{figure}
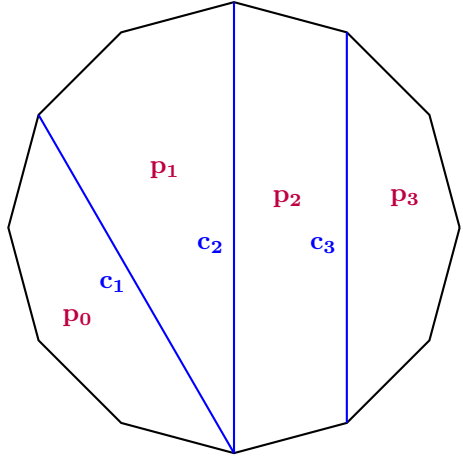	
		
		\begin{defi}[Trees]\label{def-Tree}
		A \emph{tree} is a finite graph with no cycle. The contraction of internal edges endows trees with a poset structure: we set~$\tr\leq \tr'$ if the tree~$\tr$ can be obtained from the tree~$\tr'$ by contracting some internal edges. If the number of external vertices is fixed, the minimal element of this poset is the only tree with zero internal edge, called a \emph{corolla}. By looking at the possible structures on the set of external vertices of a tree, we get different posets:
		\begin{enumerate}[--]
		\item the poset~$\mathsf{Tree}(S)$ of trees with external vertices labeled by~$S$;
		\item the poset~$\mathsf{RTree}(S,\rho)$ of rooted trees with external vertices labeled by~$S$, the root being labeled by the basepoint~$\rho$;
		\item the poset~$\mathsf{PRTree}(S,\omega)$ of planar rooted trees with external vertices labeled by~$S$ in the total order~$\omega$, the root being labeled by the maximal element;	
		\item the poset~$\mathsf{PTree}(S,\gamma)$ of planar trees with external vertices labeled by~$S$ in the cyclic order~$\gamma$;		
		\item the poset~$\mathsf{DTree}(S,\delta)$ of dihedral trees (trees embedded in an unoriented plane) with external vertices labeled by~$S$ in the dihedral order~$\delta$.
		\end{enumerate}
		All these posets are graded by the number of internal edges of the trees.
		\end{defi}
		
		For a tree~$\tr$, we denote its set of vertices by~$V(\tr)$. For each vertex~$v \in V(\tr)$, we denote  its set of adjacent edges by~$E(v)$. Notice that if~$\tr$ is a rooted tree then we get a pointed set~$(E(v),\rho(v))$; if~$\tr$ is a planar rooted tree then we get a totally ordered set~$(E(v),\omega(v))$; if~$\tr$ is a planar tree then we get a cyclic set~$(E(v),\gamma(v))$; if~$\tr$ is a dihedral tree then we get a dihedral set~$(E(v),\delta(v))$. We refer the reader to ~\cite[Section~C.4]{lodayvallettebook} for more details on the notions related to trees. 
				
		\begin{lem}\label{lem:Diss-Tree}
		The graded poset~$\mathsf{Diss(S, \delta)}$ of dissections of a polygon~$(S,\delta)$ and the graded poset~$\mathsf{DTree}(S, \delta)$ of dihedral trees labeled by the dihedral set~$(S,\delta)$ are isomorphic.
		\end{lem}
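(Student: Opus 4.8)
The statement is an instance of the classical duality between dissections of a polygon and trees; the plan is to make this duality explicit, to endow it with dihedral decorations, and to check that it intertwines the two poset and grading structures.

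First I would define a map $\Phi \colon \mathsf{Diss}(S,\delta) \to \mathsf{DTree}(S,\delta)$ sending a dissection $\di$ to its \emph{dual dihedral tree}: the internal vertices of $\Phi(\di)$ are the sub-polygons $p \in P(\di)$; each chord $c \in \di$ gives an internal edge joining the two sub-polygons that share $c$; each edge $s \in S$ of the ambient polygon gives an external vertex labeled by $s$, attached to the unique sub-polygon having $s$ among its sides. The set of edges adjacent to the vertex $p$ is then canonically identified with $E(p)$, and one decorates that vertex with the dihedral structure $\delta(p)$, which makes $\Phi(\di)$ into a dihedral tree. In the other direction I would define $\Psi$ on a dihedral tree $\tr$ by gluing the polygons $(E(v),\delta(v))$, for $v \in V(\tr)$, along the internal edges of $\tr$; the result is a polygon whose sides are indexed by the external vertices of $\tr$, hence by $S$, equipped with its dihedral structure and with the dissection whose chords are the images of the internal edges.

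Next I would verify that $\Phi$ is well defined — that $\Phi(\di)$ really is a tree, i.e. connected and without cycle — and that $\Phi$ and $\Psi$ are mutually inverse. Both facts, together with the compatibility with the posets, follow by induction on the number of chords from one local observation: removing a single chord $c$ from a dissection $\di$ replaces the two sub-polygons of $P(\di)$ adjacent to $c$ by their union, which on the tree side is exactly the contraction of the internal edge corresponding to $c$. In particular $\di \subseteq \di'$ holds if and only if $\Phi(\di)$ is obtained from $\Phi(\di')$ by contracting the internal edges indexed by the chords in $\di' \setminus \di$, i.e. $\di \leq \di'$ if and only if $\Phi(\di) \leq \Phi(\di')$; the same holds for $\Psi$, so $\Phi$ is an isomorphism of posets. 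Since a dissection with $k$ chords has $k+1$ sub-polygons, the tree $\Phi(\di)$ has $k+1$ internal vertices and $k$ internal edges, so $\Phi$ carries $\mathsf{Diss}_k(S,\delta)$ onto the dihedral trees with $k$ internal edges and respects the gradings.

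The part that requires care — rather than a genuine obstacle — is the bookkeeping of the dihedral decorations: beyond the standard, Catalan-type bijection between underlying dissections and underlying trees, one must check that under gluing the local dihedral structures $\delta(p)$ on the sets $E(p)$ assemble into the global dihedral structure $\delta$ on $S$, and conversely that cutting a dihedral tree along an internal edge splits the dihedral structure at the incident vertex into the dihedral structures of the two pieces. Once this is checked directly in the case of a single chord, respectively a single internal edge, the general statement follows from the induction above.
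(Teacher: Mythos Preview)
Your proposal is correct and follows essentially the same approach as the paper: both construct the dual-graph map sending sub-polygons to vertices and chords to internal edges, and both deduce the poset isomorphism from the observation that removing a chord corresponds to contracting an internal edge. Your write-up is simply more detailed than the paper's, which leaves the inverse map and the dihedral bookkeeping implicit.
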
		
				
		\begin{figure}[h!!]
		\def\svgwidth{.30\textwidth}
		\begin{center}
\begin{tikzpicture}[scale=0.3]

\draw[thick] (0:10) -- (30:10) -- (60:10) -- (90:10) -- (120:10) -- (150:10) -- (180:10) 
-- (210:10) -- (240:10) -- (270:10) -- (300:10) -- (330:10) -- (360:10);

\draw[thick, blue] (60:10) -- (300:10) ;
\draw[thick, blue] (90:10) -- (270:10) ;
\draw[thick, blue] (150:10) -- (270:10) ;

\node[purple] at (0:7.7) {\scalebox{2.5}{$\bullet$}};
\node[purple] at (0:2) {\scalebox{2.5}{$\bullet$}};
\node[purple] at (140:4) {\scalebox{2.5}{$\bullet$}};
\node[purple] at (210:8) {\scalebox{2.5}{$\bullet$}};

\draw[purple, very thick] (0:7.7) -- (0:2) -- (140:4) -- (210:8);

\draw[purple, very thick] (0:7.7) -- (15:11);
\draw[purple, very thick] (0:7.7) -- (-15:11);
\draw[purple, very thick] (0:7.7) -- (45:11);
\draw[purple, very thick] (0:7.7) -- (-45:11);

\draw[purple, very thick] (0:2) -- (75:11);
\draw[purple, very thick] (0:2) -- (-75:11);

\draw[purple, very thick] (140:4) -- (105:11);
\draw[purple, very thick] (140:4) -- (135:11);

\draw[purple, very thick] (210:8) -- (165:11);
\draw[purple, very thick] (210:8) -- (195:11);
\draw[purple, very thick] (210:8) -- (225:11);
\draw[purple, very thick] (210:8) -- (255:11);

\end{tikzpicture}

		\end{center}
		\caption{The isomorphism between polygon dissections and dihedral trees.}\label{figuredissectiontree}
		\end{figure}
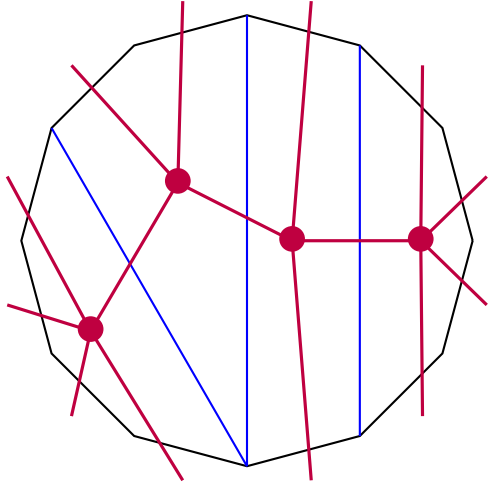					
				
		\begin{proof}
		Let us describe the isomorphism~$\mathsf{Diss(S, \delta)} \to \mathsf{DTree}(S, \delta)$. Given a dissection~$\di\in\mathsf{Diss}(S,\delta)$, one considers its \enquote{dual graph}~$\tr$: each sub-polygon~$p\in P(\di)$ gives rise to a vertex~$v\in V(\tr)$ of the tree~$\tr$ and  each edge of this polygon gives rise to an edge of the tree, see Figure~\ref{figuredissectiontree}. The tree~$\tr$ is naturally a dihedral tree, and it is straightforward to check that this defines a bijection between~$\mathsf{Diss}(S,\delta)$ and~$\mathsf{DTree}(S,\delta)$. Under this bijection, removing a chord from the dissection corresponds to contracting internal edges of trees, hence we get an isomorphism of posets, which respects the grading by construction. 
		\end{proof}

	\subsection{Dihedral operads}\label{sec:DiOp} In this section, we recall the classical  notions of operads and we introduce a new one, the notion of dihedral operad, which suits the geometrical problem studied here. 
			We work in the general setting of an abelian  symmetric monoidal category~$(\mathsf{A}, \otimes)$ such that the monoidal product preserves coproducts.  In the next section and later on, we will specify the category~$\mathsf{A}$ to be the category  of graded mixed Hodge structures. 
			
		\begin{defi}[Categories of structured sets]\label{defimodules}
			We consider the following categories of structured sets.
			\begin{enumerate}[--]
			\item The category~$\mathsf{Bij}$ of finite sets~$S$ and bijections. 
			\item The category~$\mathsf{Bij}_*$ of pointed sets~$(S, \rho)$ and bijections respecting the basepoint. 
			\item The category~$\mathsf{Ord}_*$ of totally ordered sets~$(S,\omega)$ and bijections respecting the total order. 
			\item The category~$\mathsf{Cyc}$ of cyclic sets~$(S, \gamma)$ and bijections respecting the cyclic order. 
			\item The category~$\mathsf{Dih}$ of dihedral sets~$(S, \delta)$ and bijections respecting the dihedral structure. 
		\end{enumerate}
		\end{defi}

		The forgetful functors between the various categories of structured sets assemble as a commutative diagram
		$$
		\xymatrix{
		\textsf{Dih} \ar[d]& \textsf{Cyc} \ar[l] & \ar[l]\textsf{Ord}_*\ar[d] \\
		 \textsf{Bij} && \ar[ll] \textsf{Bij}_*
		}
		$$
		where the functor~$\mathsf{Ord}_*\rightarrow\mathsf{Bij}_*$ picks the maximal element as basepoint.

		In each case, we consider the category of functors from these categories to the category~$\mathsf{A}$, for instance~$\mathcal{M} : \mathsf{Bij} \to \mathsf{A}$, that we respectively call the category of ~$\mathsf{Bij}$-modules,~$\mathsf{Bij}_*$-modules,~$\mathsf{Ord}_*$-modules,~$\mathsf{Cyc}$-modules, and~$\mathsf{Dih}$-modules. We denote them respectively by~$\mathsf{Bij}$-$\mathsf{Mod}$,~$\mathsf{Bij}_*$-$\mathsf{Mod}$,~$\mathsf{Ord}_*$-$\mathsf{Mod}$,~$\mathsf{Cyc}$-$\mathsf{Mod}$, and~$\mathsf{Dih}$-$\mathsf{Mod}$. We then get a commutative diagram of forgetful functors.
		
		$$
		\xymatrix{
		\textsf{Dih-Mod} \ar[r] & \textsf{Cyc-Mod} \ar[r] & \textsf{Ord}_*\textsf{-Mod} \\
		 \textsf{Bij-Mod} \ar[rr]\ar[u]&& \ \textsf{Bij}_*\textsf{-Mod} \ar[u]\ .
		}
		$$
		
		In the next definition we are using tensor products labeled by sets; see~\cite[Section~$5.1.14$]{lodayvallettebook} for more details on this notion.
		
		\begin{defi}[Monads of trees]\label{defimonads}
		We consider the following monads of trees. 
		\begin{enumerate}[--]
		\item The monad~$\mathbb{T} : \mathsf{Bij}\text{-}\mathsf{Mod}\to \mathsf{Bij}\text{-}\mathsf{Mod}$ 	is defined via trees:
			$$\mathbb{T}\mathcal{M}(S):=\bigoplus_{\mathfrak{t}\in \mathsf{Tree}(S)} \left( \bigotimes_{v\in V(\mathfrak{t})} \mathcal{M}(E(v)) \right).$$	 
		\item The monad~$\mathbb{RT} : \mathsf{Bij}_*\text{-}\mathsf{Mod}\to \mathsf{Bij}_*\text{-}\mathsf{Mod}$ 	is defined via  rooted trees:
			$$\mathbb{RT}\mathcal{M}(S, \rho):=\bigoplus_{\mathfrak{t}\in \mathsf{RTree}(S, \rho)} \left( \bigotimes_{v\in V(\mathfrak{t})} \mathcal{M}(E(v),\rho(v)) \right).$$	 
		\item The monad~$\mathbb{PRT} : \mathsf{Ord}_*\text{-}\mathsf{Mod}\to \mathsf{Ord}_*\text{-}\mathsf{Mod}$ 	is defined via  planar rooted trees:
			$$\mathbb{PRT}\mathcal{M}(S, \omega):=\bigoplus_{\mathfrak{t}\in \mathsf{PRTree}(S, \omega)} \left( \bigotimes_{v\in V(\mathfrak{t})} \mathcal{M}(E(v),\omega(v)) \right).$$
		\item The monad~$\mathbb{PT} : \mathsf{Cyc}\text{-}\mathsf{Mod}\to \mathsf{Cyc}\text{-}\mathsf{Mod}$ 	is defined via  planar trees:
			$$\mathbb{PT}\mathcal{M}(S, \gamma):=\bigoplus_{\mathfrak{t}\in \mathsf{PTree}(S, \gamma)} \left( \bigotimes_{v\in V(\mathfrak{t})} \mathcal{M}(E(v),\gamma(v)) \right).$$	 
		\item The monad~$\mathbb{DT} : \mathsf{Dih}\text{-}\mathsf{Mod}\to \mathsf{Dih}\text{-}\mathsf{Mod}$ 	is defined via  dihedral trees:
			$$\mathbb{DT}\mathcal{M}(S, \delta):=\bigoplus_{\mathfrak{t}\in \mathsf{DTree}(S, \delta)} \left( \bigotimes_{v\in V(\mathfrak{t})} \mathcal{M}(E(v),\delta(v)) \right).$$	 
		\end{enumerate}
		The composition law of these monads, e.g.~$\mathbb{T}\circ\mathbb{T}\rightarrow\mathbb{T}$, is given by substitution of trees, and the unit, e.g.~$1\rightarrow\mathbb{T}$, is given by the inclusion into the direct summand indexed by corollas. See~\cite[Section~$5.6.1$]{lodayvallettebook} for more details.
		\end{defi}
		
		\begin{rem}\label{remmonad}
		In the above commutative diagram, the horizontal forgetful functors commute with the respective monads: the forgetful functor $\mathsf{Dih}\text{-}\mathsf{Mod}\rightarrow\mathsf{Cyc}\text{-}\mathsf{Mod}$ commutes with $\mathbb{DT}$ and $\mathbb{PT}$; the forgetful functor $\mathsf{Cyc}\text{-}\mathsf{Mod}\rightarrow \mathsf{Ord}_*\text{-}\mathsf{Mod}$ commutes with $\mathbb{PT}$ and $\mathbb{PRT}$; the forgetful functor $\mathsf{Bij}\text{-}\mathsf{Mod} \rightarrow \mathsf{Bij}_*\text{-}\mathsf{Mod}$ commutes with $\mathbb{T}$ and $\mathbb{RT}$. There is no corresponding statement for the vertical forgetful functors. 
		\end{rem}
	
		\begin{defi}[Types of operads]\label{defioperads}
		An \emph{operad} (resp. a \emph{cyclic operad}, a \emph{nonsymmetric  operad}, a \emph{nonsymmetric cyclic operad}, and a \emph{dihedral operad}) is an algebra over the monad~$\mathbb{RT}$ of rooted trees (resp. 
		the monad~$\mathbb{T}$ of  trees, 
		the monad~$\mathbb{PRT}$ of planar rooted  trees, 
		the monad~$\mathbb{PT}$ of planar  trees, 
		and the monad~$\mathbb{DT}$ of dihedral  trees).
		\end{defi}
		
		\begin{rem}
		In the rest of this article, we will always assume that all finite sets $S$ have cardinality $n\geq 3$. This is more convenient for our geometric purposes, since the moduli spaces~$\M_{0,S}$ and~$\Mbar_{0,S}$ are only defined for those sets, and also to avoid speaking of polygons with $2$ sides. The operads that we manipulate are then \emph{non-unital operads}.
		\end{rem}
			
		The aforementioned diagram of categories 
	gives rise to the following forgetful functors between the categories of operads 
		$$
		\xymatrix{
		\textsf{Dih-Op} \ar[r] & \textsf{ns-Cyc-Op} \ar[r] & \textsf{ns-Op} \\
		 \textsf{Cyc-Op} \ar[rr]\ar[u]&& \ \textsf{Op} \ar[u]\ .
		}
		$$
		
		\begin{rem}\label{remfreeoperad}
		In view of Remark \ref{remmonad}, the free dihedral operad, the free nonsymmetric cyclic operad and the free nonsymmetric operad on a given $\mathsf{Dih}$-module have the same underlying nonsymmetric operad.
		\end{rem}

\subsection{Dihedral cooperads}

	By dualizing Definitions~\ref{defimonads} and~\ref{defioperads}, one defines comonads of trees and the corresponding notions of cooperads. For more details, we refer the reader to~\cite[Section~$5.8.8$]{lodayvallettebook}. For instance, the comonad of trees is defined by the endofunctor~$\mathbb{T}^c:\mathsf{Bij}\text{-}\mathsf{Mod}\to \mathsf{Bij}\text{-}\mathsf{Mod}$ defined by 
	$$\mathbb{T}^c\mathcal{M}(S):=\bigoplus_{\tr\in\mathsf{Tree}(S)}\mathcal{M}(\tr)\ ,$$
	where we have set
	$$\mathcal{M}(\tr):=\bigotimes_{v\in V(\tr)}\mathcal{M}(E(v))\ .$$
	A cyclic cooperad consists of a~$\textsf{Bij}$-module~$\mathcal{C}$ along with decomposition morphisms
	$$\Delta_\tr:\mathcal{C}(S)\rightarrow\mathcal{C}(\tr)\ ,$$
	for any tree~$\tr\in\mathsf{Tree}(S)$, satisfying some coassociativity conditions. For the convenience of the reader, we make the definition explicit in the case of dihedral cooperads, switching from dihedral trees to polygon dissections (see Lemma~\ref{lem:Diss-Tree}).\\

	A~$\mathsf{Dih}$-module~$\mathcal{M}$ assigns to every dihedral set~$(S,\delta)$ an object~$\mathcal{M}(S,\delta)$, and to every dihedral bijection~$(S,\delta)\simeq (S',\delta')$ an isomorphism~$\mathcal{M}(S,\delta)\simeq\mathcal{M}(S',\delta')$. We introduce the notation, for a dissection~$\di\in\mathsf{Diss}(S,\delta)$:
	$$\mathcal{M}(\di):=\bigotimes_{p\in P(\di)}\mathcal{M}(E(p),\delta(p))\ .$$

\begin{defi}[Comonad of dissections]
The \emph{comonad of dissections}, denoted by~$\mathbb{DT}^c$, consists of the endofunctor~$\mathbb{DT}^c : \mathsf{Dih}\text{-}\mathsf{Mod}\to \mathsf{Dih}\text{-}\mathsf{Mod}$ defined by 
$$\mathbb{DT}^c\mathcal{M}(S,\delta):=\bigoplus_{\di\in\mathsf{Diss}(S,\delta)}\M(\di)\ .$$
Its law~$\mathbb{DT}^c\rightarrow\mathbb{DT}^c\circ\mathbb{DT}^c$ sends the direct summand indexed by a dissection~$\di$ to the direct summands indexed by all sub-dissections of~$\di$. The counit~$\mathbb{DT}^c\rightarrow 1$ is the projection on the direct summand indexed by empty dissections.
\end{defi}

\begin{defi}[Dihedral cooperad]
A \emph{dihedral cooperad} is a coalgebra over the comonad of dissections. 
\end{defi}


The data of a dihedral cooperad is equivalent to a collection of \emph{decomposition morphisms} 
$$\Delta_{\di}\ : \ \mathcal{C}(S,\delta) \to \mathcal{C}(\di) \ ,~$$
for any dihedral tree~$\di \in \mathsf{Diss}(S, \delta)$, satisfying some coassociativity conditions. The first non-trivial  decomposition morphisms correspond to dissections with one chord; such decomposition morphisms are called \emph{infinitesimal} and their iterations can generate any decomposition morphism. 

\subsection{Cobar construction and cofree dihedral cooperads}
	In this subsection and in the next one, we assume that the underlying symmetric monoidal category~$\mathsf{A}$ consists of graded objects, like chain complexes for instance. We use the cohomological convention for cooperads. In this case, one can consider the \emph{desuspension}~$s^{-1}\calC$ of any dihedral module~$\calC$ defined by the formula~$s^{-1}\calC(S,\delta)^\bullet:=\calC(S,\delta)^{\bullet+1}$. (Alternatively, one can view the element~$s^{-1}$ as a dimension one element of the category~$\mathsf{A}$ concentrated in cohomological degree~$-1$. In this case, the desuspension coincide with the tensor product with the element~$s^{-1}$.)

\begin{defi}[Cobar construction]
The \emph{cobar construction} 
$\Omega\, \calC:=\big(\mathbb{DT}(s^{-1}\calC), \mathrm{d} \big)$
of a dihedral cooperad~$\calC$ is the free dihedral operad 
 generated by~$s^{-1}\calC$ equipped with the unique derivation~$\mathrm{d}$ which extends the infinitesimal decomposition morphisms of~$\calC$. The signs induced by the desuspension force the derivation~$\mathrm{d}$ to square to zero, which makes the cobar construction into a differential graded dihedral operad. 
\end{defi}

\begin{rem}
As usual~\cite[Section~$6.5.2$]{lodayvallettebook}, if the underlying dihedral module~$\calC$ carries an internal differential, one takes it into account in the definition of the cobar construction.  This will not be the case in the sequel.
\end{rem}

	The underlying cochain complex of the cobar construction looks like 
		\begin{equation*}
0\rightarrow s^{-1}\mathcal{C}(S, \delta) \rightarrow \bigoplus_{\di\in\mathsf{Diss}_1(S,\delta)} s^{-1}\mathcal{C}(\di) \rightarrow \bigoplus_{\di\in\mathsf{Diss}_2(S,\delta)} s^{-1}\mathcal{C}(\di) \rightarrow \cdots\ .\end{equation*}
	
One can read whether a dihedral cooperad is cofree on its cobar construction as follows. 
	
		\begin{prop}\label{propfreecobar}
		Let~$\mathcal{C}$ be a dihedral cooperad. The following assertions are equivalent:
		\begin{enumerate}[(i)]
		\item the dihedral cooperad~$\mathcal{C}$ is cofree;
		\item for every dihedral set~$(S, \delta)$, the cobar construction of~$\mathcal{C}$ induces a long exact sequence
		\begin{equation}\label{eqlongexseqcobar}
		s^{-1}\mathcal{C}(S, \delta) \rightarrow \bigoplus_{\di\in\mathsf{Diss}_1(S,\delta)} s^{-1}\mathcal{C}(\di) \rightarrow \bigoplus_{\di\in\mathsf{Diss}_2(S,\delta)} s^{-1}\mathcal{C}(\di) \rightarrow \cdots\ .
\end{equation}
		\end{enumerate}
		In such a situation, the space of cogenerators of~$\mathcal{C}$  is (non-canonically) isomorphic to the space of indecomposables 
		$$\mathcal{X}(S, \delta)=\mathrm{ker}\left( \mathcal{C}(S, \delta) \rightarrow \bigoplus_{\di\in\mathsf{Diss}_1(S,\delta)} \mathcal{C}(\di)  \right).$$
		More precisely, any choice of splitting for the inclusion of~$\mathsf{Dih}$-modules~$\mathcal{X}\hookrightarrow \mathcal{C}$ leads to an isomorphism
		$$\mathcal{C}\stackrel{\cong}{\longrightarrow} \mathbb{DT}^c(\mathcal{X})\ .$$
		\end{prop}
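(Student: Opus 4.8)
The plan is to follow the standard strategy for recognizing cofree (co)algebras over a (co)monad, translated to the combinatorial setting of dissections, and to exploit the explicit bar-cobar picture. The equivalence (i) $\Rightarrow$ (ii) is the easy direction: if $\calC\cong\mathbb{DT}^c(\mathcal{X})$ for some $\mathsf{Dih}$-module $\mathcal{X}$, then the cobar complex of $\calC$ is the totalization of the direct sum, over dissections $\di$, of the local complexes $\mathbb{DT}(s^{-1}\mathbb{DT}^c(\mathcal{X}))$ restricted to the polygon $\di$ decomposes, and one is reduced to an acyclicity statement for a single polygon. Concretely, for a fixed polygon $(S,\delta)$ and a fixed "top" dissection (equivalently a fixed dihedral tree), the relevant summand of the cobar complex is the (reduced, augmented) simplicial cochain complex of the poset of sub-dissections — equivalently of a simplex, or of the face poset of an associahedron-type cell — which is acyclic away from the bottom. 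Summing these contractible pieces gives the long exact sequence \eqref{eqlongexseqcobar}. I expect this direction to be a short bookkeeping argument once the combinatorics of $\mathsf{Diss}(S,\delta)$ from Lemma~\ref{lem:Diss-Tree} is in place.

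For the converse (ii) $\Rightarrow$ (i), I would first define the candidate space of cogenerators $\mathcal{X}$ by the displayed kernel formula, so that $\mathcal{X}\hookrightarrow\calC$ is a sub-$\mathsf{Dih}$-module; choosing a splitting $\pi:\calC\twoheadrightarrow\mathcal{X}$ in $\mathsf{Dih}\text{-}\mathsf{Mod}$ (possible since we work over the field $\Q$), I would build the comparison map $\Phi:\calC\to\mathbb{DT}^c(\mathcal{X})$ as the unique morphism of dihedral cooperads whose corestriction to cogenerators is $\pi$ — explicitly, on the summand indexed by a dissection $\di\in\mathsf{Diss}_k(S,\delta)$, $\Phi$ is the composite $\calC(S,\delta)\xrightarrow{\Delta_{\di}}\calC(\di)\xrightarrow{\otimes_p\pi}\mathcal{X}(\di)$. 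Then I would prove $\Phi$ is an isomorphism by induction on the arity $n=|S|$ (and at fixed arity there is nothing below, so the induction is genuinely on $n$ via the infinitesimal decomposition into strictly smaller polygons). In the induction step, the exactness hypothesis \eqref{eqlongexseqcobar} is used precisely to control the "new" classes: it says that the only part of $\calC(S,\delta)$ not detected by the iterated infinitesimal decompositions $\calC(S,\delta)\to\bigoplus_{\di\in\mathsf{Diss}_1}\calC(\di)$ is exactly $\mathcal{X}(S,\delta)$ (injectivity of the first differential gives that $\Phi$ is injective, and exactness at the later stages, combined with the induction hypothesis identifying each $\calC(\di)$ for $\di\ne\varnothing$ with $\mathbb{DT}^c(\mathcal{X})(\di)$, gives surjectivity).

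In more detail, the key steps, in order, are: (1) fix $(S,\delta)$ and set up the cobar complex as the cochain complex of the graded poset $\mathsf{Diss}(S,\delta)$ with coefficients in the $\M(\di)$'s, recording the shift $s^{-1}$ and its sign; (2) prove (i)$\Rightarrow$(ii) by reducing to the contractibility of the poset of refinements of each individual dihedral tree; (3) for (ii)$\Rightarrow$(i), define $\mathcal{X}$, pick a splitting $\pi$, and define $\Phi$; (4) check $\Phi$ is a morphism of dihedral cooperads (coassociativity of the $\Delta_\di$'s makes this formal); (5) run the induction on $n$: injectivity of $\Phi$ in arity $n$ from exactness at the $\mathsf{Diss}_1$ spot together with $\Phi$ being an iso on the summands $\M(\di)$, $\di\ne\varnothing$, by the induction hypothesis; surjectivity onto the top cogenerator summand $\mathcal{X}(S,\delta)$ from the definition of $\mathcal{X}$ and the splitting, and onto the remaining summands again by induction; (6) conclude, and note that the resulting isomorphism depends on the chosen splitting, whence the "non-canonical" in the statement. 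The main obstacle I anticipate is step (5), specifically organizing the induction so that the exact sequence \eqref{eqlongexseqcobar} is applied at the right spot: one must be careful that for $\di$ with $\ge 2$ chords, $\calC(\di)$ is a tensor product over sub-polygons each of strictly smaller arity, so the induction hypothesis applies factor-by-factor, and that the cobar differential on $\mathbb{DT}^c(\mathcal{X})$ matches, under $\Phi$, the cobar differential on $\calC$ — a sign/compatibility check that is routine but where it is easy to slip.
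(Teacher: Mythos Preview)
Your proposal is correct and follows essentially the same route as the paper. Both directions match: for (i)$\Rightarrow$(ii) you reduce to the acyclicity of the bar-cobar type complex for a cofree cooperad, and for (ii)$\Rightarrow$(i) you define $\mathcal{X}$ as the displayed kernel, split the inclusion, build the comparison map $\Phi:\mathcal{C}\to\mathbb{DT}^c(\mathcal{X})$, and prove it is an isomorphism by induction on the arity $n$.

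The only place where the paper is a bit crisper than your sketch is the organization of step (5). Rather than arguing injectivity and surjectivity of $\Phi$ separately, the paper applies (i)$\Rightarrow$(ii) to the cofree cooperad $\mathbb{DT}^c(\mathcal{X})$ to obtain a second exact row, and then runs the 5-lemma on the resulting map of long exact sequences (the leftmost vertical map is the identity on $\mathcal{X}$, and all the vertical maps over $\mathsf{Diss}_{k}$ for $k\geq 1$ are isomorphisms by the induction hypothesis, since each sub-polygon has strictly smaller arity). Your phrase ``exactness at the later stages, combined with the induction hypothesis \ldots, gives surjectivity'' is exactly this, but you should make explicit that the target $\mathbb{DT}^c(\mathcal{X})$ also satisfies the exactness \eqref{eqlongexseqcobar}; otherwise there is no obvious way to lift elements of $\bigoplus_{|\di|\geq 1}\mathcal{X}(\di)$ back to $\mathcal{C}(S,\delta)$. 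Also, a small slip: what you call ``exactness at the $\mathsf{Diss}_1$ spot'' is not what gives injectivity of $\Phi$ --- injectivity uses only that $\ker(d^0)=\mathcal{X}$ (the definition of $\mathcal{X}$) together with the induction hypothesis and the fact that $\pi$ restricts to the identity on $\mathcal{X}$.
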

		
		\begin{proof}
		The long sequence (\ref{eqlongexseqcobar}) is exact if and only if the long sequence 
		\begin{equation}\label{eqlongexseqcobarbis}
		0\rightarrow s^{-1}\mathcal{X}(S, \delta) \rightarrow s^{-1}\mathcal{C}(S, \delta) \rightarrow \bigoplus_{\di\in\mathsf{Diss}_1(S,\delta)} s^{-1}\mathcal{C}(\di) \rightarrow \bigoplus_{\di\in\mathsf{Diss}_2(S,\delta)} s^{-1}\mathcal{C}(\di) \rightarrow \cdots \
		\end{equation}
		is exact. 
		
		$(i)\Rightarrow (ii)$:  Suppose that the dihedral cooperad~$\mathcal{C}\cong \mathbb{DT}^c(\mathcal{X})$ is cofree on a dihedral module~$\mathcal{X}$. Since the sequence (\ref{eqlongexseqcobarbis}) is  the analog of the bar-cobar resolution~\cite[Theorem 6.6.5]{lodayvallettebook} for the nilpotent dihedral operad~$s^{-1}\mathcal{X}$, one proves that this sequence is exact by the same kind of arguments.
		
		$(ii)\Rightarrow (i)$: Let us assume that the long sequence (\ref{eqlongexseqcobarbis}) is exact. We choose a splitting~$\mathcal{C}\twoheadrightarrow \mathcal{X}$ for the inclusions~$\mathcal{X}\hookrightarrow\mathcal{C}$ in the category of~$\mathsf{Dih}$-modules. This defines a morphism of dihedral cooperads~$\mathcal{C}\rightarrow \mathbb{DT}^c(\mathcal{X})$. Let us prove, by induction on the arity~$n\geq 3$ of a dihedral set~$(S,\delta)$,  that the morphism~$\mathcal{C}(S, \delta)\rightarrow \mathbb{DT}^c(\mathcal{X})(S, \delta)$ is an isomorphism. The case~$n=3$ is obvious and initiates the induction. Suppose that the property holds up to~$n-1$. We  prove that it holds for~$n$ as follows. The preceding point shows that the long  sequence (\ref{eqlongexseqcobarbis}) associated to the dihedral cooperad~$\mathbb{DT}^c(\mathcal{X})$ is exact. The induction hypothesis provides us with the following commutative diagram:
		$$
		\xymatrix@C=12pt{
		0 \ar[r]& s^{-1}\mathcal{X}(S, \delta)  \ar[r]\ar[d]^{\cong}& s^{-1}\mathcal{C}(S, \delta)  \ar[r]\ar[d]& \bigoplus_{\di\in\mathsf{Diss}_1(S,\delta)} s^{-1}\mathcal{C}(\di) \ar[r]\ar[d]^{\cong}&\bigoplus_{\di\in\mathsf{Diss}_2(S,\delta)} s^{-1}\mathcal{C}(\di) \ar[r]\ar[d]^{\cong}& \cdots \\
		0 \ar[r]& s^{-1}\mathcal{X}(S, \delta)  \ar[r]& s^{-1}\mathbb{DT}^c(\mathcal{X})(S,\delta) \ar[r]& \bigoplus_{\di\in\mathsf{Diss}_1(S,\delta)}  s^{-1}\mathbb{DT}^c(\mathcal{X})(\di)\ar[r]& \bigoplus_{\di\in\mathsf{Diss}_2(S,\delta)}  s^{-1}\mathbb{DT}^c(\mathcal{X})(\di)\ar[r]& \cdots 
		}$$
		where the rows are exact and nearly all the vertical maps are isomorphisms. A diagram chase (the~$5$-lemma) completes the proof. 
		\end{proof}
		
		\begin{prop}
		Let~$\mathcal{C}$ be a dihedral cooperad. Then the following statements are equivalent:
		\begin{enumerate}[(i)]
		\item The dihedral cooperad $\mathcal{C}$ is cofree.
		\item The nonsymmetric cyclic cooperad underlying $\mathcal{C}$ is cofree.
		\item The nonsymmetric cooperad underlying $\mathcal{C}$ is cofree.
		\end{enumerate}
		\end{prop}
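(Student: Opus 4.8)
The plan is to leverage Remark~\ref{remfreeoperad} and Proposition~\ref{propfreecobar} in order to reduce all three cofreeness statements to the exactness of one and the same family of long sequences. First I would observe that, by the dual of Remark~\ref{remfreeoperad}, the cobar construction of a dihedral cooperad~$\mathcal{C}$, the cobar construction of the underlying nonsymmetric cyclic cooperad, and the cobar construction of the underlying nonsymmetric cooperad all have the same underlying cochain complex: indeed, by Lemma~\ref{lem:Diss-Tree} and the commutation of the horizontal forgetful functors with the respective monads, the spaces~$\mathbb{DT}^c\mathcal{C}(S,\delta)$, the analogous planar-tree construction, and the planar-rooted-tree construction are computed by summing~$\mathcal{C}(\di)$ over the \emph{same} set of dissections of the polygon~$(S,\delta)$ (after choosing an auxiliary root/basepoint, which does not change the set of dissections). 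The differentials agree because they are all induced by the infinitesimal decomposition morphisms of~$\mathcal{C}$, which are intrinsic to the~$\mathsf{Dih}$-module structure.

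Next I would record that Proposition~\ref{propfreecobar} has exact analogues for nonsymmetric cyclic cooperads and for nonsymmetric cooperads---with literally the same proof, replacing dissections by planar trees and planar rooted trees respectively, and the~$5$-lemma induction going through verbatim. (These analogues are the dual statements of the classical fact that a nonsymmetric operad is free if and only if its cobar-type complex is acyclic; one can also cite~\cite[Section~6.5]{lodayvallettebook} here.) Thus each of the three conditions (i), (ii), (iii) is equivalent to the exactness, for every dihedral set~$(S,\delta)$, of the long sequence
\begin{equation*}
s^{-1}\mathcal{C}(S,\delta)\rightarrow\bigoplus_{\di\in\mathsf{Diss}_1(S,\delta)}s^{-1}\mathcal{C}(\di)\rightarrow\bigoplus_{\di\in\mathsf{Diss}_2(S,\delta)}s^{-1}\mathcal{C}(\di)\rightarrow\cdots\ .
\end{equation*}
Since this is one and the same condition in all three cases, the equivalences~$(i)\Leftrightarrow(ii)\Leftrightarrow(iii)$ follow immediately.

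I would then spell out the two small points that make the reduction legitimate. The more delicate one---the main obstacle, although it is really a bookkeeping issue rather than a genuine difficulty---is checking that "passing to the underlying nonsymmetric (cyclic) cooperad'' does not enlarge or shrink the indexing sets of the cobar complex: a dihedral set of cardinality~$n$ has~$\frac12(n-1)!$ dihedral structures, a cyclic set has~$(n-1)!$ cyclic structures, etc., so the \emph{component} cooperads differ, but for a \emph{fixed}~$(S,\delta)$ the dihedral trees labeled by~$(S,\delta)$ coincide with the planar trees labeled by the underlying cyclic set and, after rooting, with the planar rooted trees labeled by the underlying totally ordered set; this is exactly the content of Lemma~\ref{lem:Diss-Tree} together with Remark~\ref{remmonad}. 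The second point is that the space of indecomposables~$\mathcal{X}(S,\delta)$ appearing in Proposition~\ref{propfreecobar} is the same in all three pictures, since it only involves the infinitesimal decomposition maps, so the cogenerators match up compatibly with the forgetful functors. With these observations in place the proof is complete; no genuinely new argument beyond Proposition~\ref{propfreecobar} and Remark~\ref{remfreeoperad} is needed.
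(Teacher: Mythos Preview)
Your argument is correct and follows essentially the same route as the paper: observe that Proposition~\ref{propfreecobar} goes through verbatim for nonsymmetric cyclic and nonsymmetric cooperads, then invoke Remark~\ref{remfreeoperad} (dually) to conclude that the three cobar constructions share the same underlying chain complex, so the exactness criteria coincide. The only difference is that you spell out more of the bookkeeping (indexing sets, indecomposables), which the paper leaves implicit.
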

		
		\begin{proof}
		The same proof shows that Proposition~\ref{propfreecobar} is valid in the category of nonsymmetric cyclic cooperads (resp. nonsymmetric cooperads), replacing the dihedral cobar construction by the nonsymmetric cyclic cobar construction (resp. the nonsymmetric cobar construction). By Remark~\ref{remfreeoperad}, these three cobar constructions have the same underlying nonsymmetric operad, which is the nonsymmetric cobar construction of the nonsymmetric cooperad underlying~$\mathcal{C}$. In particular, they have the same underlying chain complex, and the claim follows.
		\end{proof}
		
		\subsection{The coradical filtration and a freeness criterion}

To understand the behavior of a dihedral cooperad with respect to the freeness property, one can consider its coradical filtration. This is the direct generalisation of the same notion on the level of coalgebras~\cite[Appendix~B]{quillenrational} and on the level of cooperads~\cite[Section~$5.8.4$]{lodayvallettebook}. 
		
		\begin{defi}[Coradical filtration]
		Let~$\calC$ be a dihedral cooperad. The \emph{coradical filtration}, defined by 
		~$$F_k \calC  (S,\delta) :=\bigcap_{\di \in \mathsf{Diss}_{k+1}(S,\delta)} \ker(\Delta_\di)\ ,~$$ 
		 for~$k\geqslant 0$, is an increasing filtration of the~$\mathsf{Dih}$-module~$\calC$: 
		~$$0=F_{-1} \calC\subset F_{0} \calC \subset F_{1} \calC \subset \cdots \subset \calC \ .$$
		\end{defi}
		
		The next proposition gives a way to recognize coradical filtrations of cofree dihedral cooperads.  Let us make the following convention: if we are given an increasing filtration~$\cdots \subset R_{k-1} \mathcal{C}\subset R_k \mathcal{C}\subset \cdots$ of a~$\mathsf{Dih}$-module~$\mathcal{C}$, then we extend this filtration, in the natural way, to all objects~$\mathcal{C}(\di)$, for a dissection~$\di$ as follows. If  the dissection~$\di$ dissects~$(S,\delta)$ into polygons~$p_0, p_1, \ldots, p_k$, then we set
		$$R_r \mathcal{C} (\di):=\sum_{i_0+\cdots+i_k=r}R_{i_0} \mathcal{C} (p_0)\otimes\cdots\otimes R_{i_k} \mathcal{C} (p_k)\ .$$
	
		\begin{prop}\label{propfreefiltration}
		Let~$\calC$ be a dihedral cooperad. Assume that the underlying~$\mathsf{Dih}$-module is equipped with an increasing filtration 
		$$0=R_{-1} \mathcal{C} \subset R_0 \mathcal{C} \subset R_1 \mathcal{C}\subset \cdots \subset \calC$$
		which is finite in every arity $n$ and such that the following properties are satisfied:
		\begin{enumerate}
		\item[(a)] for every dissection~$\di\in \mathsf{Diss}_k(S,\delta)$  of cardinality~$k$ and every integer~$r$, the decomposition map~$\Delta_\di$ sends~$R_r \mathcal{C} (S,\delta)$ to~$R_{r-k} \mathcal{C} (S,\delta)$;
		\item[(b)] for every integer~$r$, the iterated decomposition map
		\begin{equation}\label{eqcocompiso}
		\mathrm{gr}_r^R\mathcal{C} (S,\delta) \xrightarrow{\bigoplus\Delta_\di}\bigoplus_{\di\in \mathsf{Diss}_r(S,\delta)}R_0 \mathcal{C} (\di)
		\end{equation}
		is an isomorphism.
		\end{enumerate}
 Then the dihedral cooperad~$\mathcal{C}$ is cofree and the filtration~$R$ is its coradical filtration. More precisely, any choice of splitting of the inclusion~$R_0 \mathcal{C} \hookrightarrow \mathcal{C}$ induces an isomorphism
		$$\mathcal{C}\stackrel{\cong}{\longrightarrow} \mathbb{DT}^c(R_0 \mathcal{C})\ .$$
		\end{prop}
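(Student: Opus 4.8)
The plan is to deduce Proposition~\ref{propfreefiltration} from the cobar criterion of Proposition~\ref{propfreecobar}, by showing that the filtration $R$ splits the cobar complex in a way compatible with the grading. First I would observe that condition (a) makes each decomposition map $\Delta_\di$ strictly compatible with the filtrations (using the induced filtration on $\mathcal{C}(\di)$ defined just before the statement), so that the whole cobar complex \eqref{eqlongexseqcobar} becomes a filtered complex; passing to the associated graded with respect to $R$ therefore reduces the exactness statement of Proposition~\ref{propfreecobar}(ii) to the exactness of the associated graded cobar complex. The point of condition (b) is precisely that it identifies this associated graded complex, in each fixed weight $r$, with a complex built only out of $R_0\mathcal{C}$ evaluated on dissections — that is, with (a piece of) the cobar complex of the cofree dihedral cooperad $\mathbb{DT}^c(R_0\mathcal{C})$ on the $\mathsf{Dih}$-module $R_0\mathcal{C}$.

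The key steps, in order: (1) Spell out that $R$ induces a filtration on $\mathbb{DT}(s^{-1}\mathcal{C})$ and, via (a), that $\mathrm{d}$ preserves it, so the cobar complex is a filtered cochain complex, finite in every arity by the finiteness hypothesis. (2) Identify $\mathrm{gr}^R$ of the cobar complex of $\mathcal{C}$ with the cobar complex of the cofree dihedral cooperad $\mathbb{DT}^c(R_0\mathcal{C})$: the term in cohomological degree (number of chords) $j$ and weight $r$ is $\bigoplus_{\di\in\mathsf{Diss}_j} s^{-1}\mathrm{gr}_r^R\mathcal{C}(\di)$, and by (b) and the definition of the induced filtration on $\mathcal{C}(\di)$ this is $\bigoplus_{\di\in\mathsf{Diss}_j}\bigoplus_{\di'\supseteq\di,\,\di'\in\mathsf{Diss}_r}s^{-1}R_0\mathcal{C}(\di')$, which is exactly the degree-$j$, weight-$r$ part of the cobar complex of $\mathbb{DT}^c(R_0\mathcal{C})$; one must check the differentials agree, which follows because both are given by the same infinitesimal decomposition/refinement of dissections. (3) Invoke the $(i)\Rightarrow(ii)$ direction of Proposition~\ref{propfreecobar} applied to the cofree dihedral cooperad $\mathbb{DT}^c(R_0\mathcal{C})$ to conclude that this associated graded complex is exact; since $R$ is a finite filtration in each arity, exactness of $\mathrm{gr}^R$ implies exactness of the original cobar complex \eqref{eqlongexseqcobar}. (4) Conclude via the $(ii)\Rightarrow(i)$ direction of Proposition~\ref{propfreecobar} that $\mathcal{C}$ is cofree, with space of cogenerators $\mathcal{X}=\ker(\mathcal{C}(S,\delta)\to\bigoplus_{\di\in\mathsf{Diss}_1}\mathcal{C}(\di))$; finally identify $\mathcal{X}$ with $R_0\mathcal{C}$ using (b) with $r=1$ (which gives $\mathrm{gr}_1^R\mathcal{C}\cong\bigoplus_{\di\in\mathsf{Diss}_1}R_0\mathcal{C}(\di)$, forcing the indecomposables to coincide with $R_0\mathcal{C}$) and check that the coradical filtration of $\mathbb{DT}^c(R_0\mathcal{C})$ matches $R$ under the resulting isomorphism, using (a) and (b) to compare $F_k$ with $R_k$ by induction on $k$.

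The main obstacle I expect is step (2): making precise the identification of $\mathrm{gr}^R$ of the cobar complex of $\mathcal{C}$ with the cobar complex of $\mathbb{DT}^c(R_0\mathcal{C})$, including the bookkeeping of which weight-graded pieces of $\mathcal{C}(\di)$ correspond to which finer dissections, and verifying that the differentials (refinement of dissections, with desuspension signs) are carried to one another. In particular one has to be careful that the induced filtration on $\mathcal{C}(\di)$ from the convention preceding the statement is compatible with both the tensor-product structure and the refinement maps, and that the isomorphism in (b) is natural enough to be assembled over all dissections simultaneously. A secondary, more routine point is the last matching-of-filtrations step (4): showing $R$ is genuinely the coradical filtration rather than just some filtration inducing cofreeness, which is an induction using that $\Delta_\di$ is strict for the filtration and that $R_0\mathcal{C}$ consists of indecomposables.
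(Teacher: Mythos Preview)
Your approach is viable but takes a longer route than the paper. The paper does not go through the cobar criterion at all: it chooses a splitting $\theta:\mathcal{C}\twoheadrightarrow R_0\mathcal{C}$, forms the induced cooperad morphism $\Theta:\mathcal{C}\to\mathbb{DT}^c(R_0\mathcal{C})$ via the universal property of cofree cooperads, and then checks directly that $\Theta$ is an isomorphism. Condition~(a) shows that $\Theta$ is compatible with the filtration $R$ on the source and the coradical filtration $F$ on the target (the grading by $|\di|$); condition~(b) is then literally the statement that $\mathrm{gr}^R_r\Theta$ is an isomorphism for every $r$. Finiteness of the filtration in each arity finishes the proof. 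Your detour through Proposition~\ref{propfreecobar} amounts to reproving this same comparison at the level of cobar complexes rather than directly on $\mathcal{C}$, which is more work for the same conclusion.

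One genuine point in your plan needs repair. In step~(2), condition~(a) says the cobar differential sends $R_r$ \emph{strictly} into $R_{r-1}$ (the infinitesimal decomposition removes one unit of filtration). With the unshifted filtration you describe, the associated graded differential therefore vanishes, and you cannot identify $\mathrm{gr}^R$ of the cobar complex with the cobar complex of $\mathbb{DT}^c(R_0\mathcal{C})$ as \emph{complexes}. The fix is to shift: set $\widetilde{R}_s$ in cobar degree $j$ to be $R_{s-j}$. Then $d$ preserves $\widetilde{R}$ without strictly decreasing it, and $\mathrm{gr}^{\widetilde{R}}_s$ in degree $j$ becomes, via~(b), the sum $\bigoplus_{\di\in\mathsf{Diss}_j}\bigoplus_{\di'\supseteq\di,\,|\di'|=s}s^{-1}R_0\mathcal{C}(\di')$, which is exactly the $|\di'|=s$ direct summand of the cobar complex of the cofree cooperad; the differentials agree by coassociativity of $\mathcal{C}$. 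With this shift your steps~(3) and~(4) go through as written. This is precisely the bookkeeping you flagged as the main obstacle; the paper's direct argument sidesteps it entirely.
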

		
		\begin{proof}
		Let us choose a splitting~$\theta :\mathcal{C} \twoheadrightarrow R_0 \mathcal{C}$ for the inclusions~$R_0 \mathcal{C}\hookrightarrow\mathcal{C}$ in the category of $\mathsf{Dih}$-modules. By the universal property of the cofree dihedral cooperads, this induces a morphism of dihedral cooperads~$\Theta : \mathcal{C}\rightarrow \mathbb{DT}^c(R_0 \mathcal{C})$. 
		The coradical filtration on the cofree dihedral cooperad~$\mathbb{DT}^c(R_0 \mathcal{C})$ is given by
		$$F_k \mathbb{DT}^c(R_0\, \mathcal{C}) (S,\delta)=\bigoplus_{\substack{r\leq k \\ \di \in \mathsf{Diss}_r(S,\delta)}} R_0 \mathcal{C}(\di).$$
		For any dissection~$\di\in \mathsf{Diss}_k(S,\delta)$ and~$k>r$, the first assumption implies that we have~$\Delta_\di(R_r \mathcal{C}(S,\delta))=0$. Therefore the morphism~$\Theta$ is compatible with the filtrations~$R$ and it induces a morphism of graded dihedral modules
		$$\mathrm{gr}^R_r  \Theta\ :\  \mathrm{gr}^R_r  \mathcal{C}(S,\delta) \rightarrow \mathrm{gr}^R_r \mathbb{DT}^c(R_0\, \mathcal{C})=\bigoplus_{\di \in \mathsf{Diss}_r(S,\delta)}R_0 \mathcal{C}(\di)\ ,$$
		which is nothing but the iterated decomposition map~(\ref{eqcocompiso}). So it is an isomorphism by the second assumption. Finally, the morphism of dihedral cooperads~$\Theta$ is an isomorphism and the proposition is proved.
		\end{proof}

\section{Moduli spaces of genus zero curves and the cyclic gravity operad}

	In this section, we begin by recalling the definitions of the moduli space of genus zero curves with marked points and its Deligne--Mumford--Knudsen compactification. We recall the definition of residues along normal crossing divisors in the context of mixed Hodge theory. This produces the cyclic gravity operad structure on the cohomology of the moduli spaces of curves.
	
	\subsection{Normal crossing divisors and stratifications}\label{parncdstrat}
	
		We introduce some vocabulary and notations on normal crossing divisors and the stratifications that they induce on complex algebraic varieties.
	
		\subsubsection{The local setting}\label{parncdstratlocal}
		
		Let~$\overline{X}$ be a small neighbourhood of~$0$ in~$\C^n$ and let us define a divisor~$\partial\overline{X}=\{z_1\cdots z_r=0\}$ in~$\overline{X}$, for some fixed integer~$r$. Its irreducible components are the (intersections with~$\overline{X}$ of the) coordinate hyperplanes~$\{z_i=0\}$ for~$i=1,\ldots,r$. This induces a stratification
		\begin{equation}\label{eqstratlocal}
		\overline{X}=\bigsqcup_{I\subset\{1,\ldots,r\}} X(I)\ ,
		\end{equation}
		where~$X(I)$ is the locally closed subset of~$\overline{X}$ defined by the conditions:~$z_i=0$ for~$i\in I$ and~$z_i\neq 0$ for~$i\in \{1,\ldots,r\}\setminus I$. 
		Notice that 
		$$I\subset I' \;\Leftrightarrow \; \overline{X}(I)\supset \overline{X}(I').$$
		The  codimension of~$X(I)$ is equal to the cardinality of~$I$, and its closure~$\overline{X}(I)$ is defined by the vanishing of the coordinates~$z_i$,~$i\in I$. 
		In other words, the closure~$\overline{X}(I)$ is the union of the strata~$X(I')$, for~$I'\supset I$:
		$$ \overline{X}(I) = \bigsqcup_{I'\supset I} X(I')\ .$$
		
		For a given set~$I\subset\{1,\ldots,r\}$, the complement~$\partial\overline{X}(I):=\overline{X}(I)\setminus X(I)$ is defined by the equation~$\prod_{i\in\{1,\ldots,r\}\setminus I}z_i=0$.

		\subsubsection{The global setting}\label{parncdstratglobal}
		
		Let~$\overline{X}$ be a smooth (not necessarily compact) complex algebraic variety and let~$\partial\overline{X}$ be a normal crossing divisor inside~$\overline{X}$. This means that around every point of~$\overline{X}$, there is a system of coordinates~$(z_1,\ldots,z_n)$, where~$n$ is the complex dimension of~$\overline{X}$, such that~$\partial\overline{X}$ is defined by an equation of the form~$z_1\cdots z_r=0$ for some integer~$r$ that depends on the point.
		
		This induces a global stratification:
		\begin{equation}\label{eqstratX}
		\overline{X}=\bigsqcup_{\mathfrak{s}\in\mathsf{Strat}}X(\mathfrak{s})\ ,
		\end{equation}
		that is constructed as (\ref{eqstratlocal}) in every local chart.
 For every~$\mathfrak{s}$ in the indexing set~$\mathsf{Strat}$, the stratum~$X(\mathfrak{s})$ is a connected locally closed subset of~$\overline{X}$. Let~$\overline{X}(\mathfrak{s})$ denote its closure. The indexing set~$\mathsf{Strat}$ for the strata is actually endowed with a poset structure defined by
		$$\mathfrak{s}\leq\mathfrak{s'} \;\Leftrightarrow\; \overline{X}(\mathfrak{s})\supset \overline{X}(\mathfrak{s'})$$
		In other words, the closure~$\overline{X}(\mathfrak{s})$ of~$X(\mathfrak{s})$ is the union of the strata~$X(\mathfrak{s'})$, for~$\mathfrak{s'}\geq \mathfrak{s}$:
		$$ \overline{X}(\mathfrak{s}) = \bigsqcup_{\mathfrak{s'}\geq \mathfrak{s}} X(\mathfrak{s'})\ .$$
		
		For an integer~$k$, we write~$\mathsf{Strat}_k$ for the indexing set of strata of codimension~$k$, making~$\mathsf{Strat}$ into a graded poset. The set~$\mathsf{Strat}_0$ only has one element corresponding to the open stratum~$X=\overline{X}\setminus\partial\overline{X}$. The closures~$\overline{X}(\mathfrak{s})$, for~$\mathfrak{s}\in \mathsf{Strat}_1$, are the irreducible components of the normal crossing divisor~$\partial\overline{X}$.
			
		For a given stratum~$X(\mathfrak{s})$, the complement~$\partial\overline{X}(\mathfrak{s}):=\overline{X}(\mathfrak{s})\setminus X(\mathfrak{s})$ is a normal crossing divisor inside~$\overline{X}(\mathfrak{s})$. 
						
	\subsection{The moduli spaces~$\M_{0,S}$ and~$\Mbar_{0,S}$}
	
		We introduce the moduli spaces of genus zero curves~$\M_{0,S}$ and~$\Mbar_{0,S}$. We refer the reader to~\cite{knudsen,keel,getzlermodulispacesgenuszero,goncharovmanin} for more details. 

		\subsubsection{The open moduli spaces~$\M_{0,S}$}	
	
		Let~$S$ be a finite set of cardinality~$n\geq 3$. The \emph{moduli space of genus zero curves with~$S$-marked points} is the quotient of the configuration space of points labeled by~$S$ on the Riemann sphere~$\mathbb{P}^1(\C)$ by the automorphisms of~$\mathbb{P}^1(\C)$. It is denoted by
	$$\M_{0,S}:=\{(z_s)_{s\in S} \in \mathbb{P}^1(\C)^S \; | \; \forall s\neq s' \, , \, z_s\neq z_{s'} \} \, / \, \mathrm{PGL}_2(\C)\ ,$$ 
	where an element~$g\in\mathrm{PGL}_2(\C)$ acts diagonally by~$g.(z_s)_{s\in S}=(g.z_s)_{s\in S}$\ .
	
		Every bijection~$S\simeq S'$ induces an isomorphism~$\M_{0,S}\simeq\M_{0,S'}$. If~$S=\{1,\ldots,n\}$ then~$\M_{0,S}$ is simply denoted by~$\M_{0,n}$.\\
 	
		The action of~$\mathrm{PGL}_2(\C)$ on~$\mathbb{P}^1(\C)$ is strictly tritransitive: for every triple~$(a,b,c)$ of pairwise distinct points on~$\mathbb{P}^1(\C)$, there exists a unique element~$g\in\mathrm{PGL}_2(\C)$ such that~$(g.a,g.b,g.c)=(\infty,0,1)$ . By fixing an identification
		$$(z_1,\ldots,z_n)=(\infty,0,t_1,\ldots,t_{n-3},1)\ ,$$
		we can thus get rid of the quotient by~$\mathrm{PGL}_2(\C)$ and obtain an isomorphism
		\begin{equation}\label{eqMcomplementarrangement}
		\M_{0,n}\simeq \{(t_1,\ldots,t_{n-3})\in \C^{n-3}\;|\; \forall i \, ,\, t_i\neq 0,1 \; ; \;  \forall i\neq j \,,\; t_i\neq t_j \}\ .
		\end{equation}
		This description makes it clear that~$\M_{0,S}$ is a smooth and affine complex algebraic variety of dimension~$n-3$.
		
		\subsubsection{The compactified moduli spaces~$\Mbar_{0,S}$}		
		
		Let~$S$ be a finite set of cardinality~$n\geq 3$, and let 
		$$\M_{0,S}\subset\Mbar_{0,S}$$
		be the Deligne--Mumford--Knudsen compactification of~$\M_{0,S}$. Every bijection~$S \simeq S'$ induces an isomorphism~$\Mbar_{0,S}\simeq\Mbar_{0,S'}$. If~$S=\{1,\ldots,n\}$ then~$\Mbar_{0,S}$ is simply denoted by~$\Mbar_{0,n}$.\\
		
		The compactified moduli space~$\Mbar_{0,S}$ is a smooth projective complex algebraic variety, and the complement~$\partial\Mbar_{0,S}:=\Mbar_{0,S}\setminus \M_{0,S}$ is a simple normal crossing divisor. The corresponding stratification (\ref{eqstratX}) is indexed by the graded poset of~$S$-trees:
		\begin{equation}\label{eqstratMbar}
		\Mbar_{0,S}=\bigsqcup_{\tr\in \mathsf{Tree}(S)} \M(\tr)\ .
		\end{equation}
		The codimension of a stratum~$\M(\tr)$ is equal to the number of internal edges of the tree~$\tr$. If we denote by~$\Mbar(\tr)$ the closure of a stratum~$\M(\tr)$ in~$\Mbar_{0,S}$, then we have		
		$$\Mbar(\tr)\supset \Mbar(\tr') \;\Leftrightarrow \; \tr\leq \tr'\ ,$$
		where the order~$\leq$ on trees is the one defined in Definition~\ref{def-Tree}. The closure~$\Mbar(\tr)$ is thus the union of the strata~$\M(\tr')$, for~$\tr'\geq \tr$.
		
		For a tree~$\tr\in\mathsf{Tree}(S)$, we have compatible product decompositions
		\begin{equation}\label{eqproductdecMMbar}
		\M(\tr)\cong \prod_{v\in V(\tr)} \M_{0,E(v)} \;\;\textnormal{ and }\;\; \Mbar(\tr)\cong \prod_{v\in V(\tr)} \Mbar_{0,E(v)}\ .
		\end{equation}
		
		The stratum corresponding to the corolla is the open stratum~$\M_{0,S}$. For~$\tr\in\mathsf{Tree}_1(S)$ a tree with only one internal edge, we get a divisor
		$$\Mbar(\tr)\cong \Mbar_{0,E_0}\times\Mbar_{0,E_1}$$
		inside~$\Mbar_{0,S}$. These divisors are the irreducible components of~$\partial\Mbar_{0,S}$.  
	
		\begin{ex}\leavevmode
		\begin{enumerate} 
		\item We have~$\M_{0,3}=\Mbar_{0,3}=\{*\}$.
		\item If we write~$\M_{0,4}=\mathbb{P}^1(\C)\setminus\{\infty,0,1\}$ as in (\ref{eqMcomplementarrangement}), then we have~$\Mbar_{0,4}=\mathbb{P}^1(\C)$. The divisor at infinity~$\partial\Mbar_{0,4}=\{\infty,0,1\}$ has three irreducible components, all isomorphic to a product~$\Mbar_{0,3}\times\Mbar_{0,3}$, indexed by the three~$4$-trees with one internal edge.
		\item If we write~$\M_{0,5}=(\mathbb{P}^1(\C)\setminus\{\infty,0,1\})^2\setminus\{t_1=t_2\}$ as in (\ref{eqMcomplementarrangement}), then~$\Mbar_{0,5}$ can be realized as the blow-up of~$\mathbb{P}^1(\C)^2$ along the three points~$(0,0)$,~$(1,1)$,~$(\infty,\infty)$, see Figure~\ref{figureMzerocinq}. The divisor at infinity~$\partial\Mbar_{0,5}$ has ten irreducible components: the three exceptional divisors and the strict transforms of the lines~$t_1=0,1,\infty$,~$t_2=0,1,\infty$,~$t_1=t_2$. They are all isomorphic to a product~$\Mbar_{0,3}\times\Mbar_{0,4}$ and are indexed by the ten~$5$-trees with one internal edge. The fifteen different intersection points of these components are indexed by the fifteen~$5$-trees with two internal edges.
		\end{enumerate}
		\end{ex}
		
		\begin{figure}[h!!]
		\def\svgwidth{.3\textwidth}
		\begin{center}
\begin{tikzpicture}[scale=0.27]
\draw[thick] (-11.5, 10) -- (8,10);
\draw[thick] (-11.5, 0) -- (-2,0);
\draw[thick] (2, 0) -- (11.5,0);
\draw[thick] (-8, -10) -- (11.5,-10);

\draw[thick] (-10, 11.5) -- (-10,-8);
\draw[thick] (0, 11) -- (0,2);
\draw[thick] (0, -2) -- (0,-11);
\draw[thick] (10, 8) -- (10,-11);

\draw[thick] (1.5,1.5) -- (8.5,8.5);
\draw[thick] (-1.5,-1.5) -- (-8.5,-8.5);

\draw [thick] (150:3) arc [radius=3, start angle=150, end angle= 300];
\draw [thick] (7.42,11.5) arc [radius=3, start angle=150, end angle= 300];
\draw [thick] (-7.42, -11.5) arc [radius=3, start angle=-30, end angle= 120];
\end{tikzpicture}
		\end{center}
		\caption{The combinatorial structure of~$\Mbar_{0,5}$.}\label{figureMzerocinq}
		\end{figure}
	
	\subsection{The category of mixed Hodge structures}
	
		We recall some useful facts on the category of mixed Hodge structures. The main references are the original articles by Deligne~\cite{delignehodge1, delignehodge2, delignehodge3} and the book~\cite{peterssteenbrink}.
	
		\begin{defi}[pure Hodge structures]\leavevmode
		A \textit{pure Hodge structure} of weight~$w$ is the data of 
		\begin{enumerate}[--]
		\item a finite-dimensional~$\Q$-vector space~$H$\ ;
		\item a finite decreasing filtration, the \textit{Hodge filtration}, ~$F^\bullet H_\C$ of the complexification~$H_\C:=H\otimes_\Q\C$\ ,
		\end{enumerate}
		such that for every integer~$p$, we have
		$$H_\C=F^pH_\C \oplus \overline{F^{w-p+1}H_\C}\ .$$
		A morphism of pure Hodge structures is a morphism of~$\Q$-vector spaces that is compatible with the Hodge filtration.
		\end{defi}
				
		\begin{defi}[mixed Hodge structures]
		 A \textit{mixed Hodge structure} is the data of 
		\begin{enumerate}[--]
		\item a finite-dimensional~$\Q$-vector space~$H$;
		\item a finite increasing filtration, the \textit{weight filtration}, ~$W_{\bullet}H$ of~$H$\ ;
		\item a finite decreasing filtration, the \textit{Hodge filtration}, ~$F^\bullet H_\C$ of the complexification~$H_\C$\ ,
		\end{enumerate}
		such that for every integer~$w$, the Hodge filtration induces a pure Hodge structure of weight~$w$ on~$\mathrm{gr}^W_wH:=W_w H / W_{w-1}H\ .$ 
		A morphism of mixed Hodge structures is a morphism of~$\Q$-vector spaces that is compatible with the weight and Hodge filtrations.
		\end{defi}
		
		A pure Hodge structure of weight~$w$ is thus nothing but a mixed Hodge structure whose weight filtration is concentrated in weight~$w$. 
		
		A very important remark is that morphisms of mixed Hodge structures are strictly compatible with the weight and Hodge filtrations. This implies that mixed Hodge structures form an abelian category. One easily defines on it a compatible structure of a symmetric monoidal category.
		
		Another consequence of this strictness property is the following lemma, used in practice to prove degeneration of spectral sequences, like in Proposition~\ref{lemdegen}.
		
		\begin{lem}\label{lemappendixzero}
		Let~$f:H\rightarrow H'$ be a morphism of mixed Hodge structures. If~$H$ is pure of weight~$w$ and~$H'$ is pure of weight~$w'$ with~$w\neq w'$, then~$f=0$.
		\end{lem}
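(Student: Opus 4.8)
The plan is to deduce the vanishing of $f$ directly from the strictness of morphisms of mixed Hodge structures with respect to the weight filtration, which was recalled just above the statement. Recall that saying $H$ is pure of weight $w$ means precisely that $W_{w-1}H=0$ and $W_wH=H$, and likewise $W_{w'-1}H'=0$ and $W_{w'}H'=H'$. Since $w\neq w'$, I would split into the two cases $w<w'$ and $w>w'$.

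First I would treat the case $w<w'$. Here mere compatibility of $f$ with the weight filtrations suffices: one has $f(H)=f(W_wH)\subseteq W_wH'$, and since $w\leq w'-1$ this gives $f(H)\subseteq W_{w'-1}H'=0$, so $f=0$.

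Next I would treat the case $w>w'$, where the strictness is genuinely used. Strict compatibility with the weight filtration says $f(W_nH)=f(H)\cap W_nH'$ for every integer $n$. Applying this with $n=w'$ and using $W_{w'}H'=H'$ yields $f(H)=f(H)\cap H'=f(W_{w'}H)$. Since $w'\leq w-1$ we have $W_{w'}H\subseteq W_{w-1}H=0$, hence $f(W_{w'}H)=0$ and again $f=0$. Combining the two cases completes the proof.

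There is no real obstacle here: this is a routine consequence of strictness, and the only point to get right is which half of the strictness statement is invoked in each case (plain compatibility when $w<w'$, the inequality $f(W_nH)\supseteq f(H)\cap W_nH'$ when $w>w'$). One could alternatively argue that $\mathrm{im}(f)$ is simultaneously a sub-mixed Hodge structure of $H'$, hence pure of weight $w'$, and a quotient mixed Hodge structure of $H$, hence pure of weight $w$, which forces it to be zero; but the weight-filtration computation above is the most economical route.
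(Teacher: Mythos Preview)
Your argument is correct. The paper does not actually supply a proof of this lemma; it merely states it as ``another consequence of this strictness property'' of morphisms of mixed Hodge structures, so your write-up is exactly the kind of elaboration the authors are implicitly invoking, and your case split (plain compatibility for $w<w'$, strictness for $w>w'$) is the standard way to unpack that remark.
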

	
		The \textit{pure Tate structure of weight~$2k$}, denoted by~$\Q(-k)$, is the only pure Hodge structure of weight~$2k$ and dimension~$1$; its Hodge filtration is concentrated in degree~$k$. They satisfy~$\Q(-k)\otimes\Q(-l)\cong\Q(-k-l)$ and~$\Q(-k)^\vee\cong\Q(k)$. A mixed Hodge structure is said to be \textit{pure Tate of weight~$2k$} if it is isomorphic to a direct sum~$\Q(-k)^{\oplus d}$ for a certain integer~$d$.
		
		If~$H$ is a mixed Hodge structure and~$k$ is an integer, we denote by~$H(-k)$ the \textit{Tate twist} of~$H$ consisting in shifting the weight filtration by~$2k$ and the Hodge filtration by~$k$. It is equal to the tensor product of~$H$ by~$\Q(-k)$.\\
		
		The importance of mixed Hodge structures in the study of the topology of complex algebraic varieties is explained by the following fundamental theorem of Deligne.
		
		\begin{thm}\cite[Proposition 8.2.2]{delignehodge3}
		Let~$X$ be a complex algebraic variety. For every integer~$k$, the cohomology group~$H^k(X)$ is endowed with a functorial mixed Hodge structure. 
		\end{thm}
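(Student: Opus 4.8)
The plan is to follow Deligne's original strategy, proceeding by successive reductions: from an arbitrary complex algebraic variety to smooth varieties, and from smooth varieties to smooth projective ones, where classical Hodge theory already provides the answer. The geometric inputs are Hironaka's resolution of singularities and the description of normal crossing divisors and their stratifications recalled in Subsection~\ref{parncdstratglobal}; the abstract input is the strictness of morphisms of mixed Hodge structures recalled just before the theorem, together with Lemma~\ref{lemappendixzero}.

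First I would treat the smooth projective case: if $X$ is smooth and projective, the classical Hodge decomposition $H^k(X,\C)=\bigoplus_{p+q=k}H^{p,q}$ endows $H^k(X)$ with a pure Hodge structure of weight $k$, functorially in morphisms of smooth projective varieties. Next, the smooth non-proper case: choose a smooth projective compactification $\overline{X}\supset X$ whose boundary $D=\overline{X}\setminus X$ is a normal crossing divisor, and compute $H^\bullet(X)$ via the logarithmic de Rham complex $\Omega^\bullet_{\overline{X}}(\log D)$. This complex carries the Hodge filtration $F$ (the stupid filtration by form-degree) and the weight filtration $W$ (by number of logarithmic poles); the associated weight spectral sequence has $E_1$-page built from the cohomology of the multiple intersections of the components of $D$, suitably Tate-twisted. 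Since those intersections are smooth projective, $E_1$ is a direct sum of pure Hodge structures, and a weight argument via Lemma~\ref{lemappendixzero} forces degeneration at $E_2$. This equips $H^k(X)$ with a mixed Hodge structure whose weight-graded pieces are pure.

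For a general variety $X$ — possibly singular, possibly non-proper — I would choose a smooth proper hypercovering $X_\bullet\to X$, a simplicial variety with each $X_n$ smooth, constructed by iterating resolution of singularities and compactification. Cohomological descent identifies $H^k(X)$ with the cohomology of the total complex of $n\mapsto \Omega^\bullet_{X_n}(\log D_n)$, and the first-quadrant spectral sequence $E_1^{p,q}=H^q(X_p)$ carries a mixed Hodge structure term by term by the smooth case. The differentials are morphisms of mixed Hodge structures, so every page inherits one, and $H^k(X)$ acquires a mixed Hodge structure by assembling the pieces $E_\infty^{p,q}$; strictness ensures the extensions behave correctly with respect to $W$ and $F$.

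The main obstacle is not constructing \emph{a} mixed Hodge structure from one such model, but proving it is \emph{canonical} — independent of the compactification, resolution, and hypercovering — and functorial in $X$. This is handled by the standard device that any two choices are dominated by a third (common refinements of hypercoverings, blow-ups resolving the indeterminacy locus of rational maps between compactifications), which reduces every comparison to the effect of a single morphism; one then invokes the uniqueness part of Deligne's formalism, resting again on strictness: a map of filtered complexes that is a bifiltered quasi-isomorphism for $W$ and $F$ induces an isomorphism of mixed Hodge structures on cohomology. We will not reproduce this argument here and refer instead to \cite[Section~8]{delignehodge3}.
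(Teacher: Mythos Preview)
The paper does not prove this theorem at all: it is stated with a citation to Deligne~\cite[Proposition~8.2.2]{delignehodge3} and used as a black box, with no argument given. So there is no ``paper's own proof'' to compare your proposal against.

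That said, your outline is a faithful summary of Deligne's original construction in~\cite{delignehodge2,delignehodge3}: pure Hodge theory on smooth projective varieties, the logarithmic de Rham complex with its weight and Hodge filtrations for the smooth open case, and smooth proper hypercoverings plus cohomological descent for the general case, with independence of choices established by comparing models via common refinements and invoking strictness. This is exactly the content the citation points to, so in spirit your proposal matches what the paper implicitly relies on. If anything, note that what you wrote is a high-level roadmap rather than a proof; the substantive work (the mixed Hodge complex formalism, the degeneration of the Hodge-to-de~Rham spectral sequence in the logarithmic setting, and the verification that simplicial descent is compatible with the filtrations) is precisely what~\cite{delignehodge3} carries out over many pages and cannot realistically be reproduced here.
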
	
		
	\subsection{Logarithmic forms and residues}
	
		We recall the notion of logarithmic form along a normal crossing divisor and that of a residue. We refer the reader to~\cite[3.1]{delignehodge2} for more details.
	
		\subsubsection{The local setting}
		
			We work in the local setting of Section~\ref{parncdstratlocal}. We say that a meromorphic differential form on~$\overline{X}$ has \textit{logarithmic poles} along~$\partial\overline{X}$, or that it is a \textit{logarithmic form} on~$(\overline{X},\partial\overline{X})$, if it can be written as a linear combination of forms of the type
			$$\dfrac{dz_{i_1}}{z_{i_1}}\wedge\cdots\wedge\dfrac{dz_{i_s}}{z_{i_s}}\wedge\eta\ ,$$
			with~$1\leq i_1<\cdots<i_s\leq r$ and where~$\eta$ a holomorphic form on~$\overline{X}$. Logarithmic forms are closed under the exterior derivative on forms.
			
			Any logarithmic form on~$(\overline{X},\partial\overline{X})$ can be written as
			$$\omega=\dfrac{dz_1}{z_1}\wedge\alpha+\beta\ ,$$
			where~$\alpha$ and~$\beta$ are forms with logarithmic poles along~$\{z_2\cdots z_r=0\}$. We define the \textit{residue} of~$\omega$ on~$\overline{X}(1)=\{z_1=0\}$ to be the restriction
			\begin{equation}\label{eqreslocal}
			\mathrm{Res}(\omega):=2\pi i\, \alpha_{|\overline{X}(1)}\ .
			\end{equation}
			It is a well-defined logarithmic form on~$(\overline{X}(1),\partial\overline{X}(1))$. The residue operation lowers the degree of the forms by~$1$ and anticommutes with the exterior derivative:~$d\circ\mathrm{Res}+\mathrm{Res}\circ d=0$.
			
			More generally, for sets~$I\subset I' \subset\{1,\ldots,r\}$ with~$|I'|=|I|+1$, we get residue operations~$\mathrm{Res}^I_{I'}$ from logarithmic forms on~$(\overline{X}(I),\partial\overline{X}(I))$ to logarithmic forms on~$(\overline{X}(I'),\partial\overline{X}(I'))$.
		
		\subsubsection{The global setting}
		
			We work in the global setting of Section~\ref{parncdstratglobal}. By gluing together the local definitions of the previous paragraph, one defines on each closure~$\overline{X}(\mathfrak{s})$ a complex of sheaves of logarithmic forms on~$(\overline{X}(\mathfrak{s}),\partial\overline{X}(\mathfrak{s}))$:
			$$\Omega^\bullet_{\overline{X}(\mathfrak{s})}(\log\partial\overline{X}(\mathfrak{s}))\ .$$

			If~$j_{\mathfrak{s}}:X(\mathfrak{s})\hookrightarrow \overline{X}(\mathfrak{s})$ denotes the natural open immersion, we have a quasi-isomorphism
			$(j_{\mathfrak{s}})_*\C_{X(\mathfrak{s})}\isom\Omega^\bullet_{\overline{X}(\mathfrak{s})}(\log\partial\overline{X}(\mathfrak{s}))$, which induces isomorphisms between cohomology groups:
		\begin{equation}\label{eqqis}
		H^k(X(\mathfrak{s}),\C) \, \cong\,  \mathbb{H}^k(\overline{X}(\mathfrak{s}),\Omega^{\bullet}_{\overline{X}(\mathfrak{s})}(\log\partial\overline{X}(\mathfrak{s})))\ .
		\end{equation}
		
		For elements~$\mathfrak{s}\leq\mathfrak{s}'$ in~$\mathsf{Strat}$ with~$|\mathfrak{s}'|=|\mathfrak{s}|+1$, we denote the corresponding closed immersion by~$i_{\mathfrak{s}'}^{\mathfrak{s}}:\overline{X}(\mathfrak{s}')\hookrightarrow \overline{X}(\mathfrak{s})$. By applying the local construction of the previous paragraph in every local chart, we get a residue morphism
		
		\begin{equation}\label{eqresOmega}
		\mathrm{Res}_{\mathfrak{s}'}^{\mathfrak{s}}:\Omega^\bullet_{\overline{X}(\mathfrak{s})}(\log\partial \overline{X}(\mathfrak{s}))\rightarrow (i_{\mathfrak{s}'}^{\mathfrak{s}})_*\Omega^{\bullet-1}_{\overline{X}(\mathfrak{s}')}(\log \partial\overline{X}(\mathfrak{s}'))\ , 
		\end{equation}
		which anticommutes with the exterior derivative on forms. In view of (\ref{eqqis}), this induces a residue morphism between cohomology groups:
		$$\mathrm{Res}_{\mathfrak{s}'}^{\mathfrak{s}}: H^\bullet\big(X(\mathfrak{s}),\C\big)\rightarrow H^{\bullet-1}(X(\mathfrak{s'}),\C)\ .$$
		
		This residue morphism is actually defined over~$\Q$ and it is compatible with the mixed Hodge structures if we add the right Tate twist, giving rise to residue morphisms
		\begin{equation}\label{eqresHodge}
		\mathrm{Res}_{\mathfrak{s}'}^{\mathfrak{s}}: H^\bullet(X(\mathfrak{s}))\rightarrow H^{\bullet-1}(X(\mathfrak{s}'))(-1)\ .
		\end{equation}

	\subsection{The cyclic gravity cooperad}\label{parcyclicGrav}
	
		Following Getzler, we use the residue morphisms of the previous paragraph to define the cyclic gravity cooperad in the category of graded mixed Hodge structures. Let~$S$ be a finite set of cardinality~$n\geq 3$, and let us choose an~$S$-tree~$\tr\in\mathsf{Tree}_1(S)$  with one internal edge. Let us denote by~$v_0$ and~$v_1$ its two vertices and by~$E_0:=E(v_0)$ and~$E_1:=E(v_1)$ the corresponding sets of adjacent edges. The stratum indexed by~$\tr$ in the moduli space~$\M_{0,S}$ is
		$$\M(\tr)\cong \M_{0,E_0}\times\M_{0,E_1}\ .$$
		For integers~$a$ and~$b$, we thus get residue morphisms
		\begin{equation}\label{eqRes12}
		\Delta_{\tr}:H^{a+b-1}(\M_{0,S})(-1)\rightarrow H^{a-1}(\M_{0,E_0})(-1) \otimes H^{b-1}(\M_{0,E_1})(-1)\ .
		\end{equation}
		They are obtained from (\ref{eqresHodge}) by using the K\"{u}nneth formula, adding a Tate twist~$(-1)$ and multiplying by the Koszul sign~$(-1)^{a-1}$, which reflects the cohomological degree shift. Let us define the~$\textsf{Bij}$-module~$\mathcal{C}$ in the category of graded mixed Hodge structures by 
		$$\mathcal{C}(S):=H^{\bullet-1}(\M_{0,S})(-1)\ .$$
	Associated to any set~$V$, one considers the one dimensional vector space~$\det (V):=\bigwedge_{v\in V} \,\mathbb{Q}v$. 		The signed residue morphisms (\ref{eqRes12}) are not quite the decomposition morphisms of a cyclic cooperad. Instead, they give rise to decomposition morphisms
		\begin{equation}\label{eqdeltadet}
		\Delta_{\mathfrak{t}} : \mathcal{C}(S) \rightarrow \det(V(\mathfrak{t})) \otimes \mathcal{C}(\mathfrak{t})\ ,
		\end{equation}
		for any~$S$-tree~$\tr\in\mathsf{Tree}(S)$, that satisfy analogs of the axioms a cyclic cooperad, but with different signs. Such an algebraic structure on~$\mathcal{C}$ is actually called an \emph{anti-cyclic cooperad}, see~\cite[2.10]{getzlerkapranov}. Note that in (\ref{eqRes12}) the choice of an ordering~$V(\tr)=\{v_0,v_1\}$ gives a trivialization~$\det(V(\mathfrak{t}))\simeq \Q$ of the determinant. The following definition was introduced by Getzler~\cite{getzlertopologicalgravity,getzlermodulispacesgenuszero}.
		
		\begin{defi}[Cyclic gravity cooperad] The \emph{cyclic gravity cooperad} is the cyclic suspension~\cite[2.10]{getzlerkapranov} of the anti-cyclic cooperad~$\mathcal{C}$: 		
		$$\Grav(S):=\det(S)\otimes H^{\bullet+n-3}(\M_{0,S})(-1)\ ,~$$
		for any finite set~$S$  of cardinality~$n\geq 3$.
		It forms a cyclic cooperad in the category of graded mixed Hodge structures, which is concentrated in non-positive cohomological degree~$-(n-3)\leq\bullet\leq 0$. The decomposition morphisms
		$$\Delta_{\mathfrak{t}}:\Grav(S)\rightarrow \Grav(\mathfrak{t})$$
		for the cyclic gravity cooperad are given by signed residues.
		\end{defi}
		
		Getzler showed~\cite[Theorem 4.5]{getzlertopologicalgravity} that the cyclic gravity operad, linear dual to the cyclic gravity cooperad, is generated by one element in each cyclic arity~$n\geq 3$, and he also gave a presentation for the operadic ideal of relations. More specifically, the generator in cyclic arity~$n$ is the natural generator of the space~$H_0(\M_{0,n})(1)$, which lies in homological degree~$-(n-3)$, and the relations are generalizations of the Jacobi identity for Lie algebras. In particular, the generator of cyclic arity~$3$ satisfies the Jacobi identity, and one gets the following theorem.
		
		\begin{thm}\label{thmlie}\cite[3.8]{getzlermodulispacesgenuszero}
		The degree zero sub-operad of the cyclic gravity operad is isomorphic to the cyclic Lie operad. In particular, we get an isomorphism of~$\mathsf{Bij}$-modules
		$$\mathcal{L}\textit{ie}\,(S) \cong \det(S)\otimes H_{n-3}(\M_{0,S})(1)\ .$$
		\end{thm}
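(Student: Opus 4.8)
The plan is to derive Theorem~\ref{thmlie} from Getzler's presentation of the cyclic gravity operad recalled just above, combined with the classical computation of the Betti numbers of~$\M_{0,n}$. Write~$\mathcal{G}^{0}$ for the degree-zero sub-operad of the cyclic gravity operad. By the definition of~$\Grav$ and the duality between the cyclic gravity cooperad and operad, one has~$\mathcal{G}^{0}(S)=\det(S)\otimes H_{n-3}(\M_{0,S})(1)$ for~$|S|=n$, so the content of the theorem is the identification~$\mathcal{G}^{0}\cong\mathcal{L}\textit{ie}$ as cyclic operads, and the displayed isomorphism of~$\mathsf{Bij}$-modules then follows formally.

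First I would describe~$\mathcal{G}^{0}$ in terms of generators. By Getzler's theorem the cyclic gravity operad is generated by one element~$g_n$ in each cyclic arity~$n\geq 3$, with~$g_n$ in homological degree~$-(n-3)$. Under operadic composition homological degrees add, so a composite of generators~$g_{n_1},\dots,g_{n_k}$ has degree~$3k-\sum_i n_i\leq 0$, with equality if and only if~$n_1=\cdots=n_k=3$. Hence every degree-zero element is a linear combination of iterated compositions of the arity-three generator~$g_3$, i.e.\ $\mathcal{G}^{0}$ is the cyclic sub-operad generated by~$g_3$.

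Next I would produce a morphism from the cyclic Lie operad. In cohomological degree zero and cyclic arity three,~$\Grav(\{1,2,3\})=\det(\{1,2,3\})\otimes H^{0}(\M_{0,3})(-1)$ is one-dimensional and carries the sign representation of the symmetric group~$\mathfrak{S}_3$; dually,~$g_3$ is an antisymmetric arity-three element compatible with the cyclic structure, that is, a cyclic Lie bracket. Since~$g_3$ satisfies the Jacobi identity---part of Getzler's presentation---the universal property of~$\mathcal{L}\textit{ie}$ (the cyclic operad freely generated by such an antisymmetric arity-three element modulo Jacobi) yields a morphism of cyclic operads~$\varphi\colon\mathcal{L}\textit{ie}\to\mathcal{G}^{0}$, which is surjective by the previous step.

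Finally I would conclude by a dimension count in each arity; this is the step where the statement really becomes nontrivial, since a priori~$g_3$ could satisfy relations beyond Jacobi and make~$\varphi$ non-injective. For~$|S|=n$ one has~$\dim\mathcal{L}\textit{ie}(S)=(n-2)!$, since cyclically this is the Lie operad in ordinary arity~$n-1$. On the other side, the Poincar\'e polynomial of~$\M_{0,n}$ is~$\prod_{j=2}^{n-2}(1+jt)$, so its top Betti number is~$\prod_{j=2}^{n-2}j=(n-2)!$, whence~$\dim\mathcal{G}^{0}(S)=\dim H_{n-3}(\M_{0,S})=(n-2)!$ as well. Thus~$\varphi$ is a surjection of~$\mathsf{Bij}$-modules between spaces of equal finite dimension in every arity, hence an isomorphism of cyclic operads, and unwinding the~$\det$ and Tate twists gives the stated isomorphism. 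The only inputs beyond bookkeeping are Getzler's presentation and this Betti number computation; the remaining delicacy is just keeping track of the cyclic-operadic signs and of the~$\det$ and Tate twists consistently.
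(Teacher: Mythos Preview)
Your argument is correct and is precisely the line the paper indicates: it does not prove this theorem itself but cites Getzler, recording only that the arity-three generator is antisymmetric and satisfies Jacobi, which is exactly your input for building the surjection~$\varphi$. Your extra step---the dimension count via the top Betti number~$b_{n-3}(\M_{0,n})=(n-2)!$---is the standard way to upgrade Getzler's presentation to the isomorphism, and the paper leaves that implicit in the citation; so your write-up is in fact more complete than the paper's treatment while following the same route.
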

	
\section{Brown's moduli spaces and the dihedral gravity cooperad}

	In this section, we introduce Brown's moduli spaces as a partial compactification of the moduli spaces of genus zero curves. Forgetting many of the symmetries of the gravity operad, one obtains the dihedral gravity operad. We conclude with the proof of the equivalence between the purity of the mixed Hodge structure on the cohomology of Brown's moduli spaces and the cofreeness of the dihedral gravity cooperad.

	\subsection{Brown's moduli spaces~$\Mdelta_{0,S}$}
	
		Let~$S$ be a finite set of cardinality~$n\geq 3$ and let~$\delta$ be a dihedral structure on~$S$. Brown defined~\cite[\S 2]{brownPhD} a space~$\Mdelta_{0,S}$ that fits between the moduli space~$\M_{0,S}$ and its compactification~$\Mbar_{0,S}$ with open immersions:
		$$\M_{0,S}\subset \Mdelta_{0,S} \subset \Mbar_{0,S}\ .$$
		
		Recall that~$\mathsf{DTree}(S,\delta)\subset\mathsf{Tree}(S)$ denotes the set of~$S$-trees that have a dihedral embedding compatible with~$\delta$.
		
		\begin{defi}[Brown's moduli space~$\Mdelta_{0,S}$]
		Brown's moduli space~$\Mdelta_{0,S}$ is the subspace of~$\Mbar_{0,S}$ defined as the union of strata indexed by the trees underlying dihedral trees:
		$$\Mdelta_{0,S}:=\bigsqcup_{\tr\in \mathsf{DTree}(S,\delta)} \M(\tr)\ .$$
		\end{defi}
		
		For~$\tr$ and~$\tr'$ two~$S$-trees such that~$\tr\leq\tr'$, we have~$\tr'\in\mathsf{DTree}(S,\delta) \Rightarrow \tr\in\mathsf{DTree}(S,\delta)$; thus, Brown's moduli space~$\Mdelta_{0,S}$ is an open subvariety of~$\Mbar_{0,S}$. In other words, it is the complement in~$\Mbar_{0,S}$ of the union of the closed subvarieties~$\Mbar(\tr)$, for~$\tr\in \mathsf{Tree}(S)\setminus  \mathsf{DTree}(S,\delta)$; in this description, it is actually enough to delete the divisors~$\Mbar(\tr)$, for trees~$\tr$  with one internal edge.
		
		Every dihedral bijection~$(S,\delta)\simeq (S',\delta')$ induces an isomorphism~$\M^{\delta}_{0,S}\simeq\M^{\delta'}_{0,S'}$. If we consider~$S=\{1,\ldots,n\}$ with its standard dihedral structure ~$\delta$, then~$\Mdelta_{0,S}$ is simply denoted by~$\Mdelta_{0,n}$.
		
		\begin{thm}\label{thmbrownaffine}\cite[Theorem 2.21]{brownPhD}
		Brown's moduli space~$\Mdelta_{0,S}$ is a smooth and affine complex algebraic variety, and the complement~$\partial\Mdelta_{0,S}:=\Mdelta_{0,S}\setminus\M_{0,S}$ is a normal crossing divisor.
		\end{thm}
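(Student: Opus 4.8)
The plan is as follows. Smoothness and the normal crossing property come essentially for free, so the real content is affineness. Indeed $\Mdelta_{0,S}$ is an open subvariety of the smooth projective variety $\Mbar_{0,S}$, hence is smooth; and since $\M_{0,S}\subset\Mdelta_{0,S}\subset\Mbar_{0,S}$ are open immersions we have $\partial\Mdelta_{0,S}=\Mdelta_{0,S}\cap\partial\Mbar_{0,S}$, and the restriction of the simple normal crossing divisor $\partial\Mbar_{0,S}$ to an open subset is again simple normal crossing, in particular normal crossing. To prove that $\Mdelta_{0,S}$ is affine, I would realize it explicitly as a closed subvariety of an affine space.

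First I would attach to each chord $c$ of the polygon $(S,\delta)$ a cross-ratio coordinate $u_c$ on $\M_{0,S}$: the cross-ratio of four of the marked points $z_s$ singled out by $c$ and $\delta$, normalized so that $u_c$ is a regular, nowhere-vanishing function on $\M_{0,S}$. For $S=\{1,\dots,n\}$ with its standard dihedral structure these recover Brown's dihedral coordinates, which in the simplicial chart \eqref{eqMcomplementarrangement} are monomials in the $t_i$, $1-t_i$ and $t_i-t_j$. The key lemma is a local computation around a codimension-one boundary stratum $\M(\tr)$, where $\tr$ has one internal edge and so corresponds to a partition of $S$ into two blocks: in the standard analytic coordinates $u_c$ has at worst a simple pole along $\Mbar(\tr)$, and that pole actually occurs only when $\tr$ does \emph{not} underlie a dihedral tree. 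Consequently the polar locus of $u_c$ in $\Mbar_{0,S}$ is contained in $\bigcup_{\tr\in\mathsf{Tree}_1(S)\setminus\mathsf{DTree}(S,\delta)}\Mbar(\tr)$, which is exactly the divisor removed to form $\Mdelta_{0,S}$. Hence every $u_c$ restricts to a regular function on $\Mdelta_{0,S}$, and together they define a morphism $u=(u_c)_c\colon\Mdelta_{0,S}\to\mathbb{A}^{\chi_\delta}$, where $\chi_\delta$ denotes the set of chords of $(S,\delta)$.

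It then remains to show that $u$ is an isomorphism onto a closed subvariety of $\mathbb{A}^{\chi_\delta}$ (cut out by the Plücker/polygon relations among the $u_c$), which would make $\Mdelta_{0,S}$ affine. I would argue by induction on $n=|S|$: the cases $n=3,4$ are trivial ($\Mdelta_{0,3}$ is a point and $\Mdelta_{0,4}\cong\mathbb{A}^1$). On the open part $\M_{0,S}$ the classical fact that cross-ratios separate genus-zero configurations (via \eqref{eqMcomplementarrangement}) shows $u$ is a locally closed immersion; to propagate this across the boundary I would use the compatible product decompositions $\M(\tr)\cong\prod_{v\in V(\tr)}\M_{0,E(v)}$ of \eqref{eqproductdecMMbar} together with the way the $u_c$ and their relations restrict to each stratum, so that the immersion property on the strata of $\Mdelta_{0,S}$ of smaller arity forces it on $\Mdelta_{0,S}$. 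Equivalently, one can observe that $u$ is proper — it is the restriction to $\Mdelta_{0,S}=\bar u^{-1}(\mathbb{A}^{\chi_\delta})$ of a morphism $\bar u$ into $(\mathbb{P}^1)^{\chi_\delta}$ obtained from the proper variety $\Mbar_{0,S}$ after resolving the indeterminacies of the $u_c$ along the removed boundary — and that $u$ is injective and unramified, hence a closed immersion.

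The main obstacle is the interplay of the two local computations hidden in the previous two paragraphs: pinning down the exact polar divisors of the $u_c$, so that all of them lie among the \emph{removed} (non-dihedral) boundary divisors — this is precisely where the dihedral structure $\delta$ enters in an essential way — and checking that the $u_c$ still separate points and tangent directions along every boundary stratum of $\Mdelta_{0,S}$, not merely over $\M_{0,S}$. Both are computations in the standard charts of $\Mbar_{0,n}$, organized by the recursive/fibered structure of the moduli spaces, and carrying them out carefully is the substance of Brown's argument.
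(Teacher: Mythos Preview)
Your proposal is correct and matches the paper's treatment: the paper does not give a self-contained proof but simply observes that, with its definition of $\Mdelta_{0,S}$ as an open union of strata in $\Mbar_{0,S}$, smoothness and the normal crossing property are immediate (exactly as you argue), while affineness is the only non-trivial point and is deferred to Brown's thesis~\cite[\S 2]{brownPhD}. Your sketch of the affineness argument via the dihedral cross-ratio coordinates $u_c$ and a closed immersion into affine space is precisely Brown's method; the paper later invokes these same functions $u_c\in\mathcal{O}(\Mdelta_{0,S})$ in Section~\ref{parcohomologyMdelta}.
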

		
		With our definition of Brown's moduli spaces, the only non-trivial statement in the above theorem is the fact that~$\Mdelta_{0,S}$ is affine. Brown's original definition is via an explicit presentation of the ring of functions of~$\Mdelta_{0,S}$. The equivalence of the two definitions can be found in~\cite[\S 2.6]{brownPhD}.
		
		\subsection{The dihedral gravity cooperad}
		
		\begin{defi}[The dihedral gravity operad]
		The \textit{dihedral gravity cooperad}, still denoted by~$\Grav$, is the dihedral cooperad in the category of graded mixed Hodge structures underlying the cyclic gravity cooperad. In other words, it is obtained by applying the forgetful functor $\mathsf{Cyc}\text{-}\mathsf{Op}\rightarrow\mathsf{Dih}\text{-}\mathsf{Op}$ of Section~\ref{sec:DiOp} to the cyclic gravity operad. Recall that its underling dihedral module is given by 
		$$\Grav(S, \delta):=\det(S)\otimes H^{\bullet+n-3}(\M_{0,S})(-1)\ .~$$
		\end{defi}
		
		 For the convenience of the reader and for future use, we restate its definition in the dihedral setting by using the bijection between graded posets of Lemma~\ref{lem:Diss-Tree}:
		$$\mathsf{DTree}(S,\delta)\cong\mathsf{Diss}(S,\delta) \; , \; \tr \leftrightarrow \di \ .$$
		We may then write
		\begin{equation}\label{eqstratdelta}
		\Mdelta_{0,S}=\bigsqcup_{\di\in\mathsf{Diss}(S,\delta)} \M(\di)\ .
		\end{equation}
		The codimension of a stratum~$\M(\di)$ is the number of chords in the dissection~$\di$. If we denote by~$\Mdelta(\di)$ the closure of a stratum~$\M(\di)$ in~$\Mdelta_{0,S}$, then we have		
		$$\Mdelta(\di)\supset \Mdelta(\di') \;\Leftrightarrow \; \di\leq \di'\ ,$$
		where the order~$\leq$ on dissections is the one defined in Section~\ref{sec:Diss}. The closure~$\Mdelta(\di)$ is thus the union of the strata~$\M(\di')$, for~$\di'\geq \di$.
		
		For a dissection~$\di\in\mathsf{Diss}(S,\delta)$, we have the product decompositions
		$$\M(\di)\cong \prod_{p\in P(\di)} \M_{0,E(p)} \;\;\textnormal{ and }\;\; \Mdelta(\di)\cong \prod_{p\in P(\di)} \M^{\delta(p)}_{0,E(p)}$$
		which are compatible with the product decompositions (\ref{eqproductdecMMbar}).
		
		The stratum corresponding to the corolla is the open stratum~$\M_{0,S}$. For~$\di=\{c\}\in\mathsf{Diss}_1(S,\delta)$ a dissection consisting of only one chord, we get a divisor
		$$\Mdelta(\{c\})\cong \M^{\delta_0}_{0,E_0}\times\M^{\delta_1}_{0,E_1}$$
		inside~$\Mdelta_{0,S}$. These divisors are the irreducible components of~$\partial\Mdelta_{0,S}$. 
		
		\begin{ex}\leavevmode 
		\begin{enumerate}
		\item We have~$\Mdelta_{0,3}=\{*\}$.
		\item If we write~$\M_{0,4}=\mathbb{P}^1(\C)\setminus\{\infty,0,1\}$ and~$\Mbar_{0,4}=\mathbb{P}^1(\C)$, then we have~$\Mdelta_{0,4}=\mathbb{P}^1(\C)\setminus\{\infty\}$. The divisor at infinity~$\partial\Mdelta_{0,4}=\{0,1\}$ has two irreducible components, all isomorphic to a product~$\Mdelta_{0,3}\times\Mdelta_{0,3}$, indexed by the two dissection of a~$4$-gon with one chord.
		\item Figure~\ref{figureMdeltazerocinq} shows the combinatorial structure of~$\Mdelta_{0,5}$ inside~$\Mbar_{0,5}$. The curves in dashed lines are the complement~$\Mbar_{0,5}\setminus\Mdelta_{0,5}$. The five curves in straight lines are the five irreducible components of the divisor at infinity~$\partial\Mdelta_{0,5}$, indexed by the five dissection of a~$5$-gon with one chord. They bound a pentagon (shaded). The five different intersection points of these components are indexed by the five dissections of a~$5$-gon with two chords.
		\end{enumerate}
		\end{ex}

		\begin{figure}[h!!]
		\def\svgwidth{.3\textwidth}
		\begin{center}
\begin{tikzpicture}[scale=0.27]
\draw[white, fill=cyan] (-10,-10) -- (-10,0) -- (0,0) -- (-10, -10);
\draw[white, fill=white] (0:3) arc [radius=3, start angle=0, end angle= 360];
\draw[white, fill=white] (-7,-10) arc [radius=3, start angle=0, end angle= 360];

\draw[dashed] (-11.5, 10) -- (8,10);
\draw[thick] (-11.5, 0) -- (-2,0);
\draw[thick] (2, 0) -- (11.5,0);
\draw[dashed] (-8, -10) -- (11.5,-10);

\draw[thick] (-10, 11.5) -- (-10,-8);
\draw[dashed] (0, 11) -- (0,2);
\draw[dashed] (0, -2) -- (0,-11);
\draw[dashed] (10, 8) -- (10,-11);

\draw[thick] (1.5,1.5) -- (8.5,8.5);
\draw[thick] (-1.5,-1.5) -- (-8.5,-8.5);

\draw [thick] (150:3) arc [radius=3, start angle=150, end angle= 300];
\draw [dashed] (7.42,11.5) arc [radius=3, start angle=150, end angle= 300];
\draw [thick] (-7.42, -11.5) arc [radius=3, start angle=-30, end angle= 120];
\end{tikzpicture}

		\end{center}
		\caption{The combinatorial structure of~$\Mdelta_{0,5}$.}\label{figureMdeltazerocinq}
		\end{figure}
		
		\begin{rem}
		The stratification of~$\Mdelta_{0,n}$ has the same combinatorial structure as the natural stratification of an associahedron~$K_n$ of dimension~$n-3$. More precisely, there is a natural smooth embedding of~$K_n$ inside~$\Mdelta_{0,n}$ which is compatible with these stratifications (the shaded pentagon in Figure~\ref{figureMdeltazerocinq}). This is the same as Devadoss's realization of the associahedron~\cite[Definition 3.2.1]{devadossmosaic}. In that sense, Brown's moduli spaces~$\Mdelta_{0,n}$ are algebro-geometric analogs of associahedra.
		\end{rem}
		
		The dihedral decomposition morphisms
		$$\Delta_{\di}:\Grav(S, \delta)\rightarrow \Grav(\di)$$
		may be computed as (signed) residues of logarithmic forms on~$(\Mdelta_{0,S},\partial\Mdelta_{0,S})$. This is particularly interesting since~$\Mdelta_{0,S}$ is affine (Theorem~\ref{thmbrownaffine}) and we can thus use global logarithmic forms. We will give explicit formulas for these dihedral decomposition morphisms in Proposition~\ref{propresiduechords}.
	
	\subsection{The residue spectral sequence}
			In the global setting of Section~\ref{parncdstratglobal}, we prove the existence of a \textit{residue spectral sequence} which computes the cohomology of the ambient space~$\overline{X}$ in terms of the cohomology of the strata~$X(\mathfrak{s})$ and the residue morphisms. In the next paragraph, we will apply this spectral sequence to the dihedral gravity cooperad.
		
			\begin{prop}\label{propresspectralgen}
			Let~$\overline{X}$ be a smooth (not necessarily compact) complex algebraic variety and let~$\partial\overline{X}$ be a normal crossing divisor inside~$\overline{X}$, inducing a stratification
			$$\overline{X}=\bigsqcup_{\mathfrak{s}\in\mathsf{Strat}}X(\mathfrak{s})\ .$$ 
			There exists a first quadrant spectral sequence in the category of mixed Hodge structures
			\begin{equation*}
			E_1^{p,q}= \bigoplus_{\mathfrak{s}\in\mathsf{Strat}_p}H^{q-p}(X(\mathfrak{s}))(-p) \;\Longrightarrow\; H^{p+q}(\overline{X})\ ,
			\end{equation*}
			where the differential~$d_1:E_1^{p,q}\rightarrow E_1^{p+1,q}$ is the sum of the residue morphisms (\ref{eqresHodge})
			$$\mathrm{Res}_{\mathfrak{s}'}^{\mathfrak{s}}:H^{q-p}(X(\mathfrak{s}))(-p)\rightarrow H^{q-p-1}(X(\mathfrak{s}'))(-p-1)$$
			for~$\mathfrak{s}\in \mathsf{Strat}_{p}$ and~$\mathfrak{s}'\in \mathsf{Strat}_{p+1}$ such that~$\mathfrak{s}\leq \mathfrak{s}'$.
			\end{prop}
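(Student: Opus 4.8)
The plan is to construct the spectral sequence as the one associated to a suitable filtered complex of sheaves computing $H^\bullet(\overline{X})$, namely the complex of logarithmic forms $\Omega^\bullet_{\overline{X}}(\log\partial\overline{X})$, and then to identify its $E_1$ page with the stated residue complex. First I would recall that, by the quasi-isomorphism $(j)_*\C_X \isom \Omega^\bullet_{\overline{X}}(\log\partial\overline{X})$ recorded in~(\ref{eqqis}) (applied to the open stratum), the hypercohomology of the logarithmic de Rham complex computes $H^\bullet(\overline{X})$ --- wait, more precisely it computes $H^\bullet(X)$; to get $H^\bullet(\overline{X})$ one should instead use the weight filtration on $\Omega^\bullet_{\overline{X}}(\log\partial\overline{X})$ whose graded pieces involve the pushforwards of constant sheaves on the closed strata. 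Concretely, the increasing filtration by order of pole (the weight filtration $W_\bullet$) on $\Omega^\bullet_{\overline{X}}(\log\partial\overline{X})$ has associated graded $\gr^W_p \isom (a_p)_*\Omega^{\bullet-p}_{\widetilde{X}^{(p)}}[-p]$ where $\widetilde{X}^{(p)}$ is the disjoint union of $p$-fold intersections of components of $\partial\overline{X}$, i.e.\ the disjoint union of the closures $\overline{X}(\mathfrak{s})$ for $\mathfrak{s}\in\mathsf{Strat}_p$; this is the classical computation of Deligne~\cite{delignehodge2}.

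Next I would take instead the complementary (decreasing) filtration, or reindex, so that the associated spectral sequence of the filtered complex has
\[
E_1^{p,q} = \mathbb{H}^{p+q}(\overline{X},\gr^W_p) = \bigoplus_{\mathfrak{s}\in\mathsf{Strat}_p} \mathbb{H}^{q-p}\big(\overline{X}(\mathfrak{s}),\Omega^\bullet_{\overline{X}(\mathfrak{s})}(\log\partial\overline{X}(\mathfrak{s}))\big)(-p),
\]
and then apply the quasi-isomorphism~(\ref{eqqis}) on each closed stratum $\overline{X}(\mathfrak{s})$ to rewrite this as $\bigoplus_{\mathfrak{s}\in\mathsf{Strat}_p} H^{q-p}(X(\mathfrak{s}))(-p)$, which is the claimed $E_1$ term. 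The Tate twist $(-1)$ per step is exactly the twist built into the residue morphism~(\ref{eqresHodge}), and the whole construction takes place in the category of mixed Hodge complexes, so every page lives in mixed Hodge structures by Deligne's theory; this gives the ``spectral sequence in the category of mixed Hodge structures'' assertion. That the spectral sequence converges to $H^{p+q}(\overline{X})$ is the statement that the filtered logarithmic complex (with this weight filtration) is a resolution computing the cohomology of $\overline{X}$ itself --- this is where one must be a little careful, since naively $\Omega^\bullet_{\overline{X}}(\log\partial\overline{X})$ computes $H^\bullet(X)$, not $H^\bullet(\overline{X})$; the correct statement is that the $E_1$-complex here is the one whose hypercohomology, via the Gysin/residue description, reassembles $H^\bullet(\overline{X})$, and this can be checked locally using the local models of Section~\ref{parncdstratlocal} where $\overline{X}$ is a polydisc and everything is explicit.

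The identification of the differential $d_1$ with the sum of residue morphisms $\mathrm{Res}^{\mathfrak{s}}_{\mathfrak{s}'}$ is the remaining point: the connecting morphism $\gr^W_p \to \gr^W_{p+1}[1]$ in the filtered complex is, by the explicit description of $\gr^W$ in terms of logarithmic forms, precisely the Poincar\'e residue map of~(\ref{eqresOmega}), summed over all pairs $\mathfrak{s}\leq\mathfrak{s}'$ with $|\mathfrak{s}'|=|\mathfrak{s}|+1$, with the appropriate incidence signs. Under the isomorphisms~(\ref{eqqis}) this becomes exactly the residue morphism~(\ref{eqresHodge}) on cohomology, as desired. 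I would carry out the steps in this order: (1) set up $\Omega^\bullet_{\overline{X}}(\log\partial\overline{X})$ and its weight filtration, recalling Deligne's computation of $\gr^W$; (2) write down the associated spectral sequence and reindex it to first-quadrant form; (3) apply~(\ref{eqqis}) stratum by stratum to identify $E_1$; (4) identify $d_1$ with the residue via the local Poincar\'e residue formula; (5) invoke the mixed Hodge complex formalism for the Hodge-theoretic refinement and the degeneration/convergence to $H^\bullet(\overline{X})$. The main obstacle I anticipate is step (5) --- precisely pinning down why this spectral sequence abuts to the cohomology of the \emph{compactification-like} space $\overline{X}$ rather than to $H^\bullet(X)$: one has to be careful that we are using the weight filtration (whose $E_1$ involves the closed strata $\overline{X}(\mathfrak{s})$, themselves with their own normal crossing boundaries, so~(\ref{eqqis}) reintroduces the open strata $X(\mathfrak{s})$) and not the stupid filtration, and the convergence to $H^\bullet(\overline{X})$ then follows from the fact that $W_\bullet$ is exhaustive and the total complex is a resolution of $\C_{\overline{X}}$ --- this last quasi-isomorphism $\C_{\overline{X}}\isom$ (the Koszul-type total complex of the $\gr^W$ pieces) is the technical heart and is verified by the local computation in a polydisc from Section~\ref{parncdstratlocal}.
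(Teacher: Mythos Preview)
Your proposal conflates two genuinely different spectral sequences. Deligne's weight filtration on $\Omega^\bullet_{\overline{X}}(\log\partial\overline{X})$ has graded pieces
\[
\gr^W_p\Omega^\bullet_{\overline{X}}(\log\partial\overline{X})\;\simeq\;(a_p)_*\,\Omega^{\bullet-p}_{\widetilde{X}^{(p)}}\,,
\]
i.e.\ \emph{ordinary} holomorphic forms (no log poles) on the closures $\overline{X}(\mathfrak{s})$; its spectral sequence therefore has $E_1$ built from $H^\bullet(\overline{X}(\mathfrak{s}))$ and abuts to $H^\bullet(X)$. The proposition asks for the opposite: $E_1$ built from the \emph{open} strata $H^\bullet(X(\mathfrak{s}))$, converging to $H^\bullet(\overline{X})$. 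Your step ``apply the quasi-isomorphism~(\ref{eqqis}) on each closed stratum'' cannot be carried out, because (\ref{eqqis}) needs $\Omega^\bullet_{\overline{X}(\mathfrak{s})}(\log\partial\overline{X}(\mathfrak{s}))$ as input, and $\gr^W_p$ only supplies $\Omega^\bullet_{\overline{X}(\mathfrak{s})}$. No reindexing fixes this; the filtered complex you chose computes the wrong object. Your own hesitation (``wait, more precisely it computes $H^\bullet(X)$'') is exactly right and is not resolved by the rest of the argument.

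The paper instead builds a new double complex
\[
\K^{p,q}\;=\;\bigoplus_{\mathfrak{s}\in\mathsf{Strat}_p}(i_{\mathfrak{s}})_*\,\Omega^{q-p}_{\overline{X}(\mathfrak{s})}(\log\partial\overline{X}(\mathfrak{s}))
\]
with horizontal differential given by residues and vertical differential the exterior derivative, and proves directly (by an elementary local calculation in coordinates) the long exact sequence
\[
0\longrightarrow \Omega^\bullet_{\overline{X}}\longrightarrow \K^{0,\bullet}\longrightarrow \K^{1,\bullet}\longrightarrow\cdots\,.
\]
This shows the total complex $\K^\bullet$ is quasi-isomorphic to $\Omega^\bullet_{\overline{X}}$, hence to $\C_{\overline{X}}$, so its hypercohomology is $H^\bullet(\overline{X})$; the column filtration then yields the desired $E_1$ via~(\ref{eqqis}). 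The point you were groping towards in your last lines (``Koszul-type total complex \ldots\ resolution of $\C_{\overline{X}}$ \ldots\ verified by the local computation'') is precisely this resolution, but it is \emph{not} the weight filtration on a single logarithmic complex: each column already carries its own log poles along the deeper strata, which is what makes~(\ref{eqqis}) applicable. For the rational and Hodge-theoretic refinements, the paper replaces $\K^{\bullet,\bullet}$ by the analogous exact sequence of perverse sheaves $(u_{\mathfrak{s}})_*\Q_{X(\mathfrak{s})}[d-p]$, and then invokes mixed Hodge modules.
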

			
			\begin{proof} We first forget about mixed Hodge structures and prove the existence of the spectral sequence for the cohomology over~$\C$.
			Let us denote by~$i_{\mathfrak{s}}:\overline{X}(\mathfrak{s})\hookrightarrow \overline{X}$ the natural closed immersions. Let us write
			$$\K^{p,q}=\bigoplus_{\mathfrak{s}\in\mathsf{Strat}_p}(i_{\mathfrak{s}})_*\Omega^{q-p}_{\overline{X}(\mathfrak{s})}(\log\partial \overline{X}(\mathfrak{s}))\ .$$
			We give the collection of the~$\K^{p,q}$'s the structure of a double complex of sheaves on~$\overline{X}$.			
			The horizontal differential~$d':\K^{p,q}\rightarrow \K^{p+1,q}$ is induced by the residues
			$$(i_{\mathfrak{s}})_*\Omega^{q-p}_{\overline{X}(\mathfrak{s})}(\log\partial \overline{X}(\mathfrak{s})) \rightarrow (i_{\mathfrak{s}'})_*\Omega^{q-p-1}_{\overline{X}(\mathfrak{s}')}(\log\partial\overline{X}(\mathfrak{s}'))$$
			for~$\mathfrak{s}\in \mathsf{Strat}_{p}$ and~$\mathfrak{s}'\in \mathsf{Strat}_{p+1}$ such that~$\mathfrak{s}\leq \mathfrak{s}'$. The vertical differential~$d'':\K^{p,q}\rightarrow \K^{p,q+1}$ is induced by the exterior derivative on differential forms. One checks that we have~$d'\circ d'=0$,~$d''\circ d''=0$ and~$d'\circ d''+d''\circ d'=0$. We denote the corresponding total complex by 
			$$\K^n=\bigoplus_{p+q=n}\K^{p,q}\ .$$ 
			Using local coordinates on~$\overline{X}$, it is easy to check that we have a long exact sequence
			\begin{equation}\label{eqresolutionOmega}
			0\rightarrow \Omega^\bullet_{\overline{X}} \rightarrow \K^{0,\bullet}\rightarrow \K^{1,\bullet}\rightarrow \K^{2,\bullet}\rightarrow \cdots\ , 
			\end{equation}
			which induces a quasi-isomorphism~$\Omega^\bullet_{\overline{X}}\isom \K^\bullet$.
			The holomorphic Poincar\'{e} lemma implies that we have a quasi-isomorphism~$\C_{\overline{X}}\isom \Omega^\bullet_{\overline{X}}$, hence we get an isomorphism
			$$H^q(\overline{X},\C)\cong \mathbb{H}^q(\overline{X},\K^\bullet)\ .$$
			Now, the hypercohomology spectral sequence for the double complex~$\K^{\bullet,\bullet}$ filtered by the columns is exactly
			$$E_1^{p,q}=\bigoplus_{\mathfrak{s}\in \mathsf{Strat}_p}\mathbb{H}^q(\overline{X}(\mathfrak{s}),\Omega^{\bullet-p}_{\overline{X}(\mathfrak{s})}(\log\partial \overline{X}(\mathfrak{s}))) \;\Longrightarrow \; H^{p+q}(\overline{X},\C).$$ 
			Taking into account the isomorphisms~$\mathbb{H}^q(\overline{X}(\mathfrak{s}),\Omega^{\bullet-p}_{\overline{X}(\mathfrak{s})}(\log\partial \overline{X}(\mathfrak{s}))) \isom H^{q-p}(X(\mathfrak{s}),\C)$, one gets the desired spectral sequence.
			
 In order to prove that this spectral sequence is defined over~$\Q$, it is convenient to work in the category of perverse sheaves. We let~$u_{\mathfrak{s}}:X(\mathfrak{s})\hookrightarrow \overline{X}$ denote the natural locally closed immersions. We replace (\ref{eqresolutionOmega}) by the following long exact sequence in the category of perverse sheaves on~$\overline{X}$, where~$d$ denotes the complex dimension of~$\overline{X}$:
			$$0\rightarrow \Q_{\overline{X}}[d] \rightarrow u_*\Q_X[d] \rightarrow \bigoplus_{\mathfrak{s}\in\mathsf{Strat}_1}(u_{\mathfrak{s}})_*\Q_{X(\mathfrak{s})}[d-1] \rightarrow \bigoplus_{\mathfrak{s}\in\mathsf{Strat}_2}(u_{\mathfrak{s}})_*\Q_{X(\mathfrak{s})}[d-2] \rightarrow \cdots\ .$$
			Taking the hypercohomology spectral sequence and shifting all the degrees by~$d$ gives the result.
			\item The proof via perverse sheaves can be copied in the category of mixed Hodge modules~\cite{saitomodulesdehodgepolarisables}, see~\cite[Section~$14$]{peterssteenbrink}, which proves the compatibility with mixed Hodge structures.
			\end{proof}

	\subsection{Purity and freeness}
	
			We start with a classical theorem on the cohomology of the moduli spaces~$\M_{0,S}$.
		
			\begin{thm}\label{thmpurityM}
			For every integer~$k$ and every set~$S$, the cohomology group~$H^k(\M_{0,S})$ is pure Tate of weight~$2k$.
			\end{thm}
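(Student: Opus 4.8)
The plan is to exploit the realization of~$\M_{0,S}$ as the complement of a hyperplane arrangement defined over~$\Q$, together with the general principle that the cohomology of such a complement is generated in degree one by classes of Tate type. Choosing a bijection $S\isom\{1,\dots,n\}$ and using the isomorphism~(\ref{eqMcomplementarrangement}), one identifies~$\M_{0,S}$ with the complement in~$\C^{n-3}$ of the hyperplane arrangement consisting of the hyperplanes $\{t_i=0\}$, $\{t_i=1\}$ and $\{t_i=t_j\}$, all defined over~$\Q$. By the theorem of Arnold, Brieskorn and Orlik--Solomon on the cohomology of arrangement complements, the cohomology algebra~$H^\bullet(\M_{0,S})$ is generated in degree one; equivalently, for every integer~$k$ the iterated cup product
$$H^1(\M_{0,S})^{\otimes k}\longrightarrow H^k(\M_{0,S})$$
is surjective, and it is a morphism of mixed Hodge structures.

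It then remains to check that~$H^1(\M_{0,S})$ is pure Tate of weight~$2$. I would use the compactification $\M_{0,S}\subset\Mbar_{0,S}$, whose boundary is a simple normal crossing divisor with irreducible components $D_1,\dots,D_m$. Since $H^1(\Mbar_{0,S})=0$ (for instance because~$\Mbar_{0,S}$ is simply connected), the Gysin exact sequence of mixed Hodge structures
$$0\longrightarrow H^1(\Mbar_{0,S})\longrightarrow H^1(\M_{0,S})\longrightarrow\bigoplus_{i=1}^{m} H^0(D_i)(-1)$$
exhibits $H^1(\M_{0,S})$ as a sub-mixed Hodge structure of $\bigoplus_i H^0(D_i)(-1)\isom\Q(-1)^{\oplus m}$, hence as a pure Tate structure of weight~$2$. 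Concretely, $H^1(\M_{0,S})$ is spanned by the logarithmic classes attached to the linear forms defining the arrangement.

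Putting the two steps together: $H^1(\M_{0,S})^{\otimes k}$ is pure Tate of weight~$2k$, being a tensor power of copies of~$\Q(-1)$; and any quotient of a pure Tate structure of weight~$2k$ is again pure Tate of weight~$2k$, since for weight reasons such structures have no non-trivial self-extensions and are therefore semisimple. Applying this to the surjection above shows that $H^k(\M_{0,S})$ is pure Tate of weight~$2k$, as desired.

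The only substantive input is the degree-one generation of the cohomology algebra (the Arnold--Brieskorn--Orlik--Solomon theorem); everything else is formal manipulation in the abelian category of mixed Hodge structures, using the strictness of morphisms. A self-contained alternative avoiding that input is an induction on~$n$ via the forgetful fibration $\M_{0,n}\to\M_{0,n-1}$, whose fiber is~$\mathbb{P}^1(\C)$ with $n-1$ points removed: its Leray spectral sequence is one of mixed Hodge structures whose~$E_2$-terms are all pure Tate of the expected weight by the inductive hypothesis and the elementary computation of $H^\bullet$ of a punctured projective line, which forces degeneration at~$E_2$ and the claimed purity of the abutment.
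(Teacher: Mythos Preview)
Your argument is correct and follows precisely the route the paper indicates: it cites the general purity result for complements of hyperplane arrangements defined over~$\Q$ and points to Getzler's version, which deduces purity from Arnol'd's degree-one generation statement --- exactly the two-step argument (generation in degree one plus purity of~$H^1$) that you have written out. Your alternative via the forgetful fibration $\M_{0,n}\to\M_{0,n-1}$ is also standard and valid; the paper does not spell out either argument, so your level of detail exceeds what the paper provides.
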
	
			
			\begin{proof}
			Since the moduli space~$\M_{0,S}$ is a complement of a union of hyperplanes in the affine space~$\C^{n-3}$ by (\ref{eqMcomplementarrangement}), this is a consequence of a general result on complements of hyperplane arrangements~\cite{lehrerladic,shapiropure,kimweights}. See also Getzler's proof~\cite[Lemma 3.12]{getzlermodulispacesgenuszero} which only uses Arnol'd's result~\cite{arnold}.
			\end{proof}
			
			The residue spectral sequence of the previous paragraph now allows us to compute the cohomology of Brown's moduli spaces~$\Mdelta_{0,S}$ in term of the cohomology of  the spaces~$\M_{0,S}$.

			\begin{prop}
			There exists a first quadrant spectral sequence in the category of mixed Hodge structures
			\begin{equation}\label{eqssres}
			E_1^{p,q}= \bigoplus_{\di\in \mathsf{Diss}_p(S,\delta)}H^{q-p}(\M(\di))(-p) \;\Longrightarrow\; H^{p+q}(\Mdelta_{0,S})\ ,
			\end{equation}
			where the differential~$d_1:E_1^{p,q}\rightarrow E_1^{p+1,q}$ is the sum of the residue morphisms
			$$\mathrm{Res}^\di_{\di'}:H^{q-p}(\M(\di))(-p) \rightarrow H^{q-p-1}(\M(\di'))(-p-1)\ ,$$
			for~$\di\in \mathsf{Diss}_{p}(S,\delta)$ and~$\di'\in \mathsf{Diss}_{p+1}(S,\delta)$ such that~$\di\leq \di'$. 
			\end{prop}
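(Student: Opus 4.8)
The plan is simply to specialize the general residue spectral sequence of Proposition~\ref{propresspectralgen} to the pair $(\overline{X},\partial\overline{X})=(\Mdelta_{0,S},\partial\Mdelta_{0,S})$. First I would invoke Theorem~\ref{thmbrownaffine}, which asserts precisely that $\Mdelta_{0,S}$ is a smooth complex algebraic variety and that $\partial\Mdelta_{0,S}=\Mdelta_{0,S}\setminus\M_{0,S}$ is a normal crossing divisor inside it; this is exactly the hypothesis required to run Proposition~\ref{propresspectralgen}. Note that $\Mdelta_{0,S}$ is affine rather than projective, but the general statement allows a non-compact ambient variety, so this causes no difficulty.

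Next I would match up the combinatorial data. By the stratification~(\ref{eqstratdelta}), the indexing poset $\mathsf{Strat}$ of the stratification of $\Mdelta_{0,S}$ induced by $\partial\Mdelta_{0,S}$ is identified with $\mathsf{Diss}(S,\delta)$, with $\mathsf{Strat}_p$ corresponding to $\mathsf{Diss}_p(S,\delta)$ (the codimension of the stratum $\M(\di)$ being the number of chords of $\di$) and the stratum $X(\mathfrak{s})$ corresponding to $\M(\di)$; moreover the incidence relation $\mathfrak{s}\leq\mathfrak{s}'$ matches $\di\leq\di'$. The one point deserving a moment's attention is that each stratum $\M(\di)$ should be connected, as demanded in the setup of Proposition~\ref{propresspectralgen}: this follows from the product decomposition $\M(\di)\cong\prod_{p\in P(\di)}\M_{0,E(p)}$ together with the irreducibility, hence connectedness, of each factor $\M_{0,E(p)}$.

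With these identifications, Proposition~\ref{propresspectralgen} directly yields a first quadrant spectral sequence in the category of mixed Hodge structures with $E_1^{p,q}=\bigoplus_{\di\in\mathsf{Diss}_p(S,\delta)}H^{q-p}(\M(\di))(-p)$ abutting to $H^{p+q}(\Mdelta_{0,S})$, whose $d_1$ differential is the sum of the residue morphisms $\mathrm{Res}^\di_{\di'}$ over pairs $\di\leq\di'$ with $\di\in\mathsf{Diss}_p(S,\delta)$ and $\di'\in\mathsf{Diss}_{p+1}(S,\delta)$. This is exactly the claimed statement, so there is no genuine obstacle here: the content lies entirely in Proposition~\ref{propresspectralgen} and in Brown's theorem that $\partial\Mdelta_{0,S}$ is a normal crossing divisor. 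The only bookkeeping is the translation between dihedral trees and polygon dissections via Lemma~\ref{lem:Diss-Tree}, which has already been carried out.
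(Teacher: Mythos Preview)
Your proposal is correct and follows exactly the same approach as the paper: the paper's proof is the single sentence ``This is a direct application of Proposition~\ref{propresspectralgen} to the case $\overline{X}=\Mdelta_{0,S}$ with the stratification~(\ref{eqstratdelta}).'' Your additional care in checking the hypotheses (smoothness, normal crossings via Theorem~\ref{thmbrownaffine}, connectedness of the strata) is appropriate but not strictly needed beyond what the paper already records.
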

			
 			\begin{proof}
			This is a direct application of  Proposition~\ref{propresspectralgen} to the case~$\overline{X}=\Mdelta_{0,S}$ with the stratification (\ref{eqstratdelta}).
			\end{proof}
			
			The~$q$-th row of the first page~$E_1$ of the spectral sequence (\ref{eqssres}) looks like
			\begin{equation}\label{eqcobarcohomology}
			0\rightarrow H^q(\M_{0,S}) \rightarrow \bigoplus_{\di\in\mathsf{Diss}_1(S,\delta)}H^{q-1}(\M(\di))(-1) \rightarrow \bigoplus_{\di\in \mathsf{Diss}_2(S,\delta)} H^{q-2}(\M(\di))(-2) \rightarrow \cdots \ .
			\end{equation}
			
			\begin{prop}\label{propequivsscobar}
			The direct sum of the rows~$E_1^{\bullet,q}$ of the first page of the spectral sequence (\ref{eqssres}) is, up to a Tate twist~$(-1)$, the dihedral cobar construction of the (desuspension of the) dihedral gravity cooperad.
			\end{prop}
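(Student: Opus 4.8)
I want to show that the direct sum over $q$ of the rows $E_1^{\bullet,q}$ of the residue spectral sequence~\eqref{eqssres}, after a single global Tate twist $(-1)$, is isomorphic to the dihedral cobar construction $\Omega\,\Grav$ of the (desuspended) dihedral gravity cooperad. Since both objects are, termwise, direct sums indexed by dissections $\di\in\mathsf{Diss}_p(S,\delta)$, the strategy is to match them summand by summand and then check that the differentials agree up to the asserted twist.

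First I would compare the underlying graded objects. The $q$-th row of $E_1$ has its term in column $p$ equal to $\bigoplus_{\di\in\mathsf{Diss}_p(S,\delta)}H^{q-p}(\M(\di))(-p)$, and the K\"unneth formula together with the product decomposition $\M(\di)\cong\prod_{p\in P(\di)}\M_{0,E(p)}$ rewrites $H^{q-p}(\M(\di))$ as a sum of tensor products $\bigotimes_{p\in P(\di)}H^{\bullet}(\M_{0,E(p)})$. On the cobar side, the term in cobar-degree $p$ is $\bigoplus_{\di\in\mathsf{Diss}_p(S,\delta)}s^{-1}\Grav(\di)=\bigoplus_{\di}\bigotimes_{p\in P(\di)}s^{-1}\Grav(E(p),\delta(p))$, and $\Grav(E(p),\delta(p))=\det(E(p))\otimes H^{\bullet+|E(p)|-3}(\M_{0,E(p)})(-1)$. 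So on each sub-polygon the two descriptions differ by a shift, a determinant line, and a Tate twist; I would bookkeep these and observe that (i) the determinant lines $\det(E(p))$ multiply over $p\in P(\di)$ to $\det(S)$ times a sign-torsor coming from the chords, exactly as in the cyclic/anti-cyclic suspension formalism of~\cite[2.10]{getzlerkapranov} recalled before Theorem~\ref{thmlie}; (ii) the desuspensions $s^{-1}$ on each of the $p+1$ sub-polygons, combined with the internal degree shifts by $|E(p)|-3$, telescope — because $\sum_{p\in P(\di)}(|E(p)|-3)=n-3-p$ for a dissection with $p$ chords — to give a total shift matching the column index $p$ in~\eqref{eqssres}; and (iii) the Tate twists $(-1)$ on each of the $p+1$ factors, against the single twist $(-p)$ on the $E_1$-term, leave exactly one global $(-1)$ left over, which is the claimed discrepancy. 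This is precisely the combinatorics of the cyclic suspension, so the identification of graded objects should follow formally once the signs are organized.

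Next I would identify the differentials. The $d_1$ of the residue spectral sequence is, by Proposition~\ref{propresspectralgen}, the alternating sum of residue maps $\mathrm{Res}^{\di}_{\di'}$ over pairs $\di\leq\di'$ with one extra chord, i.e.\ over the covering relations in $\mathsf{Diss}(S,\delta)$. Under the isomorphism of posets $\mathsf{Diss}(S,\delta)\cong\mathsf{DTree}(S,\delta)$ of Lemma~\ref{lem:Diss-Tree}, adding a chord corresponds to blowing up one vertex of a dihedral tree into two, i.e.\ to an infinitesimal decomposition. By definition, the dihedral decomposition morphisms $\Delta_\di$ of $\Grav$ are given by exactly these signed residues (see the paragraph after the definition of the cyclic gravity cooperad), and the cobar differential $\mathrm d$ on $\Omega\,\Grav=(\mathbb{DT}(s^{-1}\Grav),\mathrm d)$ is by construction the unique derivation extending the infinitesimal decomposition morphisms. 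So on each row the $E_1$-differential is term-by-term the cobar differential, and I only need to check that the sign conventions match: the Koszul sign $(-1)^{a-1}$ built into~\eqref{eqRes12}, the signs from the anti-cyclic-to-cyclic suspension, and the desuspension signs in $\Omega$ should all be accounted for by the same telescoping as above. I would phrase this as: the residue morphisms defining $d_1$ coincide, under the K\"unneth and suspension isomorphisms, with the infinitesimal part of the cooperad structure on $\Grav$, hence $d_1$ is the cobar differential.

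**The main obstacle.** The conceptual content — that blowing up a stratum by a boundary divisor is dual to an infinitesimal cooperadic decomposition, and that residues along normal-crossing divisors implement the decomposition morphisms — is already built into the definition of $\Grav$, so the real work is entirely bookkeeping: tracking the determinant lines, the per-sub-polygon desuspensions and Tate twists, the Koszul sign $(-1)^{a-1}$ from~\eqref{eqRes12}, and the signs inherent in the cyclic suspension of an anti-cyclic cooperad, and verifying that they collate into a single global $(-1)$ twist and the correct cobar sign rule. I expect the hard part to be pinning down these signs uniformly across all dissections rather than any substantive geometric or homological-algebra input; a clean way to do it is to introduce $s^{-1}$ as a degree $-1$ object of the symmetric monoidal category (as suggested in the excerpt) so that all suspension signs become automatic consequences of the Koszul sign rule, and to use the multiplicativity $\det(S)\cong\det(E(p_0))\otimes\cdots\otimes\det(E(p_k))$ up to a controlled sign to absorb the determinant twists.
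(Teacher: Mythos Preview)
Your proposal is correct and follows essentially the same approach as the paper: identify each summand of the $E_1$-row via the K\"unneth decomposition with the corresponding summand $s^{-1}\mathcal{C}(\di)$ of the cobar construction, and observe that the $d_1$ is by construction the sum of signed residue maps, which are precisely the infinitesimal decomposition morphisms defining the cooperad structure. The paper's own proof is in fact much terser than yours---it simply writes down the twisted row as $0\to s^{-1}\mathcal{C}(S,\delta)\to\bigoplus_{\di\in\mathsf{Diss}_1}s^{-1}\mathcal{C}(\di)\to\cdots$ and explicitly ``leave[s] it to the reader to check that the sign conventions are consistent''; your telescoping computations (e.g.\ $\sum_{p\in P(\di)}(|E(p)|-3)=n-3-p$ and the count of Tate twists) and your remarks on the determinant lines are exactly that deferred bookkeeping, so you are doing more than the paper rather than less.
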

			
			\begin{proof}
			After twisting by~$(-1)$, the direct sum of the complexes (\ref{eqcobarcohomology}) can be written as
			$$0\rightarrow s^{-1}\mathcal{C}(S, \delta) \rightarrow \bigoplus_{\di\in\mathsf{Diss}_1(S,\delta)} s^{-1}\mathcal{C}(\di) \rightarrow \bigoplus_{\di\in\mathsf{Diss}_2(S,\delta)} s^{-1}\mathcal{C}(\di) \rightarrow \cdots\ ,$$
			where the arrows are (signed) infinitesimal decomposition morphisms. We leave it to the reader to check that the sign conventions are consistent. 
			\end{proof}
			
			We now turn to the degeneration of this spectral sequence.
			
			\begin{prop}\label{lemdegen}
			The spectral sequence (\ref{eqssres}) degenerates at the second page~$E_2$, that is~$E_{\infty}=E_2$.
			\end{prop}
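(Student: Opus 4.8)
The plan is to exploit the fact that the spectral sequence~(\ref{eqssres}) lives in the category of mixed Hodge structures, and to combine the purity statement of Theorem~\ref{thmpurityM} with the vanishing Lemma~\ref{lemappendixzero}. The guiding principle is that the Tate twist by~$(-p)$ appearing in the $p$-th column is engineered precisely so that all entries $E_1^{p,q}$ in a fixed row~$q$ carry the \emph{same} weight, after which any higher differential is forced to vanish for weight reasons.

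First I would pin down the weight of the entry $E_1^{p,q}=\bigoplus_{\di\in\mathsf{Diss}_p(S,\delta)}H^{q-p}(\M(\di))(-p)$. Using the product decomposition $\M(\di)\cong\prod_{p'\in P(\di)}\M_{0,E(p')}$ and the Künneth formula, the group $H^{q-p}(\M(\di))$ is a direct sum of tensor products $\bigotimes_{p'\in P(\di)}H^{k_{p'}}(\M_{0,E(p')})$ with $\sum_{p'}k_{p'}=q-p$. By Theorem~\ref{thmpurityM}, each factor $H^{k_{p'}}(\M_{0,E(p')})$ is pure Tate of weight $2k_{p'}$, so each such tensor product is pure Tate of weight $2(q-p)$, and hence so is $H^{q-p}(\M(\di))$. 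Applying the Tate twist by $(-p)$ shifts the weight by $2p$, so $H^{q-p}(\M(\di))(-p)$ is pure Tate of weight $2(q-p)+2p=2q$. Taking the direct sum over $\di\in\mathsf{Diss}_p(S,\delta)$ shows that $E_1^{p,q}$ is pure Tate of weight $2q$, independently of~$p$.

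Next I would observe that for every $r\geq 1$ the entry $E_r^{p,q}$ is a subquotient of $E_1^{p,q}$ in the abelian category of mixed Hodge structures, and that a subquotient of a pure Hodge structure of weight~$w$ is again pure of weight~$w$; hence $E_r^{p,q}$ is pure Tate of weight $2q$ for all $r\geq 1$. Now the differential $d_r\colon E_r^{p,q}\rightarrow E_r^{p+r,q-r+1}$ is a morphism of mixed Hodge structures whose source is pure of weight~$2q$ and whose target is pure of weight $2(q-r+1)=2q-2(r-1)$. For $r\geq 2$ these weights are distinct, so Lemma~\ref{lemappendixzero} forces $d_r=0$. Therefore $E_2=E_3=\cdots=E_\infty$, which is exactly the asserted degeneration at the second page.

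I do not expect a genuine obstacle in this argument: the only delicate point is the bookkeeping of the Tate twists in the first paragraph, i.e.\ checking that the twist by $(-p)$ in the $p$-th column exactly cancels the weight drop caused by each residue morphism so that all entries of a given row acquire the common weight $2q$. This is the structural reason behind the degeneration, and it is the Hodge-theoretic analogue of the classical fact that such residue (Gysin) spectral sequences degenerate because of the weight filtration.
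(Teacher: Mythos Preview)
Your proof is correct and follows essentially the same argument as the paper: use Theorem~\ref{thmpurityM} together with the K\"unneth formula to see that each $E_1^{p,q}$ is pure Tate of weight~$2q$, and then apply Lemma~\ref{lemappendixzero} to kill all differentials $d_r$ for $r\geq 2$. Your version is slightly more explicit in spelling out the subquotient argument for the purity of $E_r^{p,q}$, but the strategy is identical.
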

			
			\begin{proof}
			As a consequence of Theorem~\ref{thmpurityM} and the K\"{u}nneth formula,~$H^{q-p}(\M(\di))$ is pure Tate of weight~$2(q-p)$ for every dissection~$\di\in \mathsf{Diss}_p(S,\delta)$, and hence~$H^{q-p}(\M(\di))(-p)$ is pure Tate of weight~$2(q-p)+2p=2q$. The differential~$d_r:E_r^{p,q}\rightarrow E_r^{p+r,q-r+1}$ thus maps a pure Hodge structure of weight~$2q$ to a pure Hodge structure of weight~$2(q-r+1)$, and is zero for~$r\geq 2$ by Lemma~\ref{lemappendixzero}.
			\end{proof}
			
			In the next proposition, we prove the equivalence between two statements: a geometric statement~$(i)$, namely the purity of the Hodge structure on the cohomology of Brown's moduli spaces, and an algebraic statement~$(ii)$, namely the freeness of the dihedral gravity cooperad.	In the next section, we will prove the algebraic statement~$(ii)$ and derive the geometric statement~$(i)$. We nevertheless state this proposition as an equivalence to convince the reader that the mathematical content of the two statements is \emph{essentially the same}.
			
			\begin{thm}\label{propequivalence}
			The following statements are equivalent:
			\begin{enumerate}[(i)]
			\item for every integer~$k$ and every dihedral set~$(S,\delta)$, the cohomology group~$H^k(\Mdelta_{0,S})$ is pure Tate of weight~$2k$;
			\item the dihedral gravity cooperad is cofree. 
			\end{enumerate}
When they are true,  there is a (non-canonical) isomorphism between the dihedral gravity cooperad and the cofree dihedral cooperad on the dihedral module
			$$(S,\delta)\mapsto \det(S)\otimes H^{\bullet+n-3}(\Mdelta_{0,S})(-1)\ .$$
			\end{thm}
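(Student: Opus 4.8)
The plan is to assemble ingredients already established: the residue spectral sequence~(\ref{eqssres}) together with its degeneration at~$E_2$ (Proposition~\ref{lemdegen}), the identification of its rows with the dihedral cobar construction of the gravity cooperad (Proposition~\ref{propequivsscobar}), and the cobar criterion for cofreeness (Proposition~\ref{propfreecobar}). The bridge between the geometric statement~$(i)$ and the algebraic statement~$(ii)$ will be the vanishing of all the higher columns~$E_2^{p,q}$ with~$p\geq 1$.

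First I would read off the weight filtration on~$H^n(\Mdelta_{0,S})$ from the spectral sequence~(\ref{eqssres}), viewed as a spectral sequence of mixed Hodge structures. As in the proof of Proposition~\ref{lemdegen}, Theorem~\ref{thmpurityM} and the K\"unneth formula show that each~$E_1^{p,q}=\bigoplus_{\di\in\mathsf{Diss}_p(S,\delta)}H^{q-p}(\M(\di))(-p)$ is pure Tate of weight~$2q$; hence so is every subquotient, and in particular~$E_2^{p,q}=E_\infty^{p,q}$. Applying the exact functor~$\gr^W_w$ to the whole spectral sequence yields, for~$w=2q$, a spectral sequence concentrated in the single row of index~$q$, which therefore degenerates and gives~$\gr^W_{2q}H^n(\Mdelta_{0,S})\cong E_2^{n-q,q}$ (with~$\gr^W_w=0$ for odd~$w$). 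Since each~$E_2^{p,q}$ is pure Tate, it follows that~$H^n(\Mdelta_{0,S})$ is pure Tate of weight~$2n$ for all~$n$ and all~$(S,\delta)$ if and only if~$E_2^{p,q}=0$ whenever~$p\geq 1$.

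Next I would convert this vanishing into cofreeness. By Proposition~\ref{propequivsscobar}, the direct sum over~$q$ of the rows~$E_1^{\bullet,q}$ is, up to a Tate twist, the underlying cochain complex of the dihedral cobar construction~$\Omega\,\Grav$, with cobar degree equal to the number of chords~$p$; hence the cohomology of~$\Omega\,\Grav$ in cobar degree~$p$ is, up to that twist,~$\bigoplus_q E_2^{p,q}$. So the long sequence~(\ref{eqlongexseqcobar}) is exact for every~$(S,\delta)$ if and only if~$E_2^{p,q}=0$ for all~$p\geq 1$, and by Proposition~\ref{propfreecobar} this is equivalent to the cofreeness of the dihedral gravity cooperad. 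Combined with the previous paragraph, this proves~$(i)\Leftrightarrow(ii)$.

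For the explicit description under these equivalent hypotheses, I would use the last part of Proposition~\ref{propfreecobar}: the space of cogenerators is non-canonically the space of indecomposables~$\mathcal{X}(S,\delta)=\ker\bigl(\Grav(S,\delta)\to\bigoplus_{\di\in\mathsf{Diss}_1(S,\delta)}\Grav(\di)\bigr)$, and any splitting of~$\mathcal{X}\hookrightarrow\Grav$ induces an isomorphism~$\Grav\cong\mathbb{DT}^c(\mathcal{X})$. The first differential of the cobar construction is, up to the Tate twist and degree shift, the differential~$d_1$ of~(\ref{eqssres}) in cohomological degree~$q=\bullet+n-3$; since the spectral sequence degenerates and all higher columns vanish, the abutment filtration on~$H^q(\Mdelta_{0,S})$ is trivial, so~$\ker\bigl(H^q(\M_{0,S})\to\bigoplus_{\di\in\mathsf{Diss}_1(S,\delta)}H^{q-1}(\M(\di))(-1)\bigr)=E_2^{0,q}=H^q(\Mdelta_{0,S})$. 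Unwinding the definition~$\Grav(S,\delta)=\det(S)\otimes H^{\bullet+n-3}(\M_{0,S})(-1)$, this identifies~$\mathcal{X}(S,\delta)$ with~$\det(S)\otimes H^{\bullet+n-3}(\Mdelta_{0,S})(-1)$, as claimed. I do not expect a serious conceptual obstacle, since the substantive inputs—the purity of~$H^\bullet(\M_{0,S})$ (Theorem~\ref{thmpurityM}) and the existence and~$E_2$-degeneration of the residue spectral sequence (Propositions~\ref{propresspectralgen} and~\ref{lemdegen})—are already in place; the work is the careful assembly. The two points requiring attention are making the weight argument rigorous by working with~$\gr^W$ of a spectral sequence in the category of mixed Hodge structures (relying, as in the proof of Proposition~\ref{lemdegen}, on the strictness of morphisms of mixed Hodge structures and on Lemma~\ref{lemappendixzero}), and keeping precise track of the desuspension, the Tate twists, the~$\det$ normalisations and the Koszul signs when matching the cobar differential with the spectral-sequence differential—this last bookkeeping being exactly the content of Proposition~\ref{propequivsscobar}, which I would take as given.
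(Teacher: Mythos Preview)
Your proposal is correct and follows essentially the same approach as the paper's proof: both reduce the equivalence to the statement that $E_2^{p,q}=0$ for all $p\geq 1$ in the residue spectral sequence, using Proposition~\ref{propequivsscobar} and Proposition~\ref{propfreecobar} on the algebraic side, and the weight-$2q$ purity of $E_2^{p,q}$ on the geometric side. Your treatment of the weight argument via the exact functor $\gr^W$ applied to the spectral sequence is slightly more explicit than the paper's, which simply invokes the abutment filtration $A$ with $\gr_A^pH^{p+q}(\Mdelta_{0,S})=E_2^{p,q}$ and compares the two weights $2q$ and $2(p+q)$; but this is the same idea in different clothing.
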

			
			\begin{proof} 
			 Let us denote by~$A$ the filtration on the cohomology of~$\Mdelta_{0,S}$ that is induced by the spectral sequence (\ref{eqssres}). It is a filtration by mixed Hodge sub-structures. By Proposition~\ref{lemdegen}, we get	
			 at the second page:
			$$E_2^{p,q}=\mathrm{gr}_A^pH^{p+q}(\Mdelta_{0,S})\ .~$$
			By 
			the proof of Proposition~\ref{lemdegen}, the space~$E_2^{p,q}$ is pure Tate of weight~$2q$. Thus,~$(i)$ is equivalent to the fact that for every~$(S,\delta)$, the spectral sequence (\ref{eqssres}) satisfies~$E_2^{p,q}=0$ for~$p>0$. This is the same as requesting that each row~$E_1^{\bullet,q}$ is exact except possibly at~$\bullet=0$. According to Proposition~\ref{propequivsscobar} and Proposition~\ref{propfreecobar}, this is equivalent to~$(ii)$, and we have proved the equivalence between statements~$(i)$ and~$(ii)$. Assuming them, we see that~$H^k(\Mdelta_{0,S})=E_2^{0,k}$ is the kernel of the map
			$$H^k(\M_{0,S}) \xrightarrow{\bigoplus \Delta_\di}  \bigoplus_{\di\in \mathsf{Diss}_1(S,\delta)} H^{k-1}(\M(\di))(-1) \, ,$$
			hence the result about the cogenerators of the dihedral gravity cooperad, after a degree shift and an operadic suspension.
			\end{proof}
			
			\begin{rem}
			We can also apply the residue spectral sequence to the case~$\overline{X}=\Mbar_{0,S}$, with the stratification (\ref{eqstratMbar}). We then get a spectral sequence in the category of mixed Hodge structures
			$$E_1^{p,q}= \bigoplus_{\tr\in \mathsf{Tree}_p(S)}H^{q-p}(\M(\tr))(-p) \;\Longrightarrow\; H^{p+q}(\Mbar_{0,S})\ ,$$
			which degenerates at the second page~$E_2$. It is a classical fact that the odd cohomology groups of~$\Mbar_{0,S}$ are zero, and that for every~$k$,~$H^{2k}(\Mbar_{0,S})$ is pure Tate of weight~$2k$. Thus, the degeneration of the spectral sequence gives rise to a long exact sequence
			$$0\rightarrow H^k(\M_{0,S})\rightarrow \bigoplus_{\tr\in \mathsf{Tree}_1(S)}H^{k-1}(\M(\tr))(-1)\rightarrow\cdots\rightarrow \bigoplus_{\tr\in \mathsf{Tree}_k(S)}H^0(\M(\tr))(-k)\rightarrow H^{2k}(\Mbar_{0,S})\rightarrow 0\ .$$
			After dualizing and performing an operadic suspension, this long exact sequence gives a quasi-isomorphism from the cyclic hypercommutative operad 
			$S\mapsto H_\bullet(\Mbar_{0,S})$
			to the cyclic bar construction of the cyclic gravity operad. Under the bar-cobar adjunction, this corresponds to Getzler's quasi-isomorphism~\cite[Theorem~$4.6$]{getzlermodulispacesgenuszero}, which proves the Koszul duality between the cyclic hypercommutative operad and the cyclic gravity operad.
			\end{rem}

\section{The dihedral gravity cooperad is cofree}

	We prove that the dihedral gravity cooperad is cofree by using explicit formulas describing the cohomology of the moduli spaces~$\M_{0,S}$. The main point consist in showing that the filtration given by residual chords is the coradical filtration of the dihedral gravity cooperad. We then derive geometric consequences for Brown's moduli spaces~$\Mdelta_{0,S}$ and a new proof of a theorem of Salvatore--Tauraso.

	\subsection{Conventions}

		In this section, we will work with explicit formulas for the decomposition morphisms in the dihedral gravity cooperad. For reasons of signs, it is easier to work with its desuspension~$\mathcal{C}$, whose underlying~$\mathsf{Dih}$-module is given by
		$$\mathcal{C}(S,\delta)=H^{\bullet-1}(\M_{0,S})(-1)\ .$$
		We use the notation~$\mathcal{C}(S,\delta)$ instead of~$\mathcal{C}(S)$ because we will use a spanning set and a filtration for this space that depend on the choice of a dihedral structure.
		
		For~$\di\in\mathsf{Diss}_k(S,\delta)$ a dissection of cardinality~$r$, we will always choose an ordering~$P(\di)=\{p_0,\ldots,p_k\}$ and write~$E_i:=E(p_i)$ for the set of edges of the sub-polygons~$p_i$,~$\delta_i:=\delta(p_i)$ for the induced dihedral orders. The ordering of~$P(\di)$ gives a trivialization~$\det(P(\di))\simeq\Q$ and hence we can simply write
		$$\Delta_\di:\mathcal{C}(S,\delta)\rightarrow\mathcal{C}(\di)=\mathcal{C}(E_0,\delta_0)\otimes\cdots\otimes\mathcal{C}(E_k,\delta_k)$$
		for the dihedral decompositions (\ref{eqdeltadet}).	
	
	\subsection{Cohomology of the moduli spaces~$\M_{0,S}$}\label{parcohomologyMdelta}
			
		Let~$S$ be a finite set of cardinality~$n\geq 3$ and let~$\delta$ be a dihedral structure on~$S$. We first recall Brown's presentation of the cohomology algebra of the moduli space~$\M_{0,S}$, which is well suited for computing residues on~$\Mdelta_{0,S}$. For any chord~$c$ of~$(S,\delta)$, there exists a global holomorphic function~$u_c\in \mathcal{O}(\Mdelta_{0,S})$ such that the divisor~$\Mdelta(\{c\})$ is defined by the vanishing of~$u_c$:
		$$\Mdelta(\{c\})=\{u_c=0\}.$$

		We then define the following closed logarithmic differential~$1$-form		on~$\M_{0,S}$: 
		$$\omega_c:=\dfrac{1}{2\pi i}\dfrac{du_c}{u_c}\ .$$
 We denote by the same symbol~$\omega_c$ its class in~$H^1(\M_{0,S})$.	
		
		\begin{prop}\cite[Proposition 6.2]{brownPhD}
		The cohomology algebra~$H^\bullet(\M_{0,S})$ is generated by the classes~$\omega_c$. In other words,~$\mathcal{C}(S,\delta)$ is spanned by monomials~$\omega_{c_1}\wedge\cdots\wedge\omega_{c_k}$ for some chords~$c_1,\ldots,c_k$ of~$(S,\delta)$.
		\end{prop}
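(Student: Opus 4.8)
The plan is to deduce the statement from a classical generation theorem for complements of hyperplane arrangements, after pinning down the degree‑one part of the cohomology. By the presentation~(\ref{eqMcomplementarrangement}), the space~$\M_{0,S}$ is the complement in~$\C^{n-3}$ of the hyperplane arrangement~$\mathcal{A}$ with hyperplanes~$\{t_i=0\}$,~$\{t_i=1\}$ and~$\{t_i=t_j\}$. By Arnol'd's theorem (\cite{arnold}; see also the discussion in~\cite[Lemma~3.12]{getzlermodulispacesgenuszero} used in the proof of Theorem~\ref{thmpurityM}), the cohomology algebra of the complement of an arrangement is generated in degree one by the logarithmic classes~$\frac{1}{2\pi i}\frac{dL}{L}$ of its defining linear forms~$L$. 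Hence, as soon as~$H^1(\M_{0,S})$ is spanned by the classes~$\omega_c$, the subalgebra of~$H^\bullet(\M_{0,S})$ generated by the~$\omega_c$ equals the subalgebra generated by~$H^1(\M_{0,S})$, which is all of~$H^\bullet(\M_{0,S})$; this is exactly the assertion that~$\mathcal{C}(S,\delta)$ is spanned by the monomials~$\omega_{c_1}\wedge\cdots\wedge\omega_{c_k}$.

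It remains to prove that the~$\omega_c$ span~$H^1(\M_{0,S})$, and here the argument I would give is a dimension count together with a residue computation. On one hand, the number of hyperplanes of~$\mathcal{A}$ is~$2(n-3)+\binom{n-3}{2}=\frac{n(n-3)}{2}$, and since~$H^1$ of an arrangement complement is free of rank equal to the number of hyperplanes, one gets~$\dim_\Q H^1(\M_{0,S})=\frac{n(n-3)}{2}$; on the other hand this is exactly the number of chords of an~$n$-gon, i.e.\ of~$(S,\delta)$. So it is enough to check that the classes~$\omega_c$, for~$c$ a chord of~$(S,\delta)$, are linearly independent. For this I would use the residue maps along the boundary divisors of Brown's space: by Theorem~\ref{thmbrownaffine}, the space~$\Mdelta_{0,S}$ is smooth and affine and~$\M_{0,S}$ is the complement in it of the normal crossing divisor~$\bigcup_c\Mdelta(\{c\})$, so for every chord~$c'$ there is a residue morphism~$\mathrm{Res}_{c'}\colon H^1(\M_{0,S})\to H^0(\Mdelta(\{c'\}))(-1)\cong\Q(-1)$ as in~(\ref{eqresHodge}). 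Since~$u_c$ vanishes to order one along~$\Mdelta(\{c\})$ and is a local unit near a generic point of~$\Mdelta(\{c'\})$ for~$c'\neq c$, the connectedness of~$\Mdelta(\{c'\})$ yields~$\mathrm{Res}_{c'}(\omega_c)=\delta_{c,c'}$. Thus~$(\mathrm{Res}_{c'})_{c'}$ carries the family~$(\omega_c)_c$ to the standard basis of~$\bigoplus_c\Q$, which shows that the~$\omega_c$ are linearly independent and hence, by the count, form a basis of~$H^1(\M_{0,S})$.

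The one genuinely substantial input is Arnol'd's theorem; granting it, the rest is elementary once one knows that the functions~$u_c$ exist with~$\Mdelta(\{c\})=\{u_c=0\}$, which is recalled just before the statement. The point requiring a little care is the vanishing~$\mathrm{Res}_{c'}(\omega_c)=0$ for~$c\neq c'$: a priori~$u_c$ can vanish on part of~$\Mdelta(\{c'\})$ (namely along the codimension‑two stratum~$\Mdelta(\{c,c'\})$ when~$c$ and~$c'$ do not cross, while~$\Mdelta(\{c\})\cap\Mdelta(\{c'\})=\emptyset$ when they do cross), so one argues that~$\mathrm{Res}_{c'}(\omega_c)$, which is regular on the complement of the deeper strata inside~$\Mdelta(\{c'\})$ where it visibly vanishes, must then vanish on the whole connected divisor~$\Mdelta(\{c'\})$.
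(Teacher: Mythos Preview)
The paper does not give its own proof of this proposition: it is stated with the citation \cite[Proposition~6.2]{brownPhD} and no argument is supplied. So there is no proof in the paper to compare against; you are providing a proof where the authors chose simply to quote Brown.

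Your argument is correct. A couple of small comments. First, the result you invoke in degree one is really the Brieskorn/Orlik--Solomon extension of Arnol'd's theorem to arbitrary hyperplane arrangements; Arnol'd treated the braid arrangement, but the general statement (that $H^\bullet$ of a complement is generated by the logarithmic $1$-forms of the hyperplanes) is what you need and is standard. Second, in the residue step you write $H^0(\Mdelta(\{c'\}))$; strictly, the residue morphism~(\ref{eqresHodge}) lands in $H^0(\M(\{c'\}))(-1)$, but both spaces are one-dimensional since the stratum is connected, so this is harmless. Your handling of the delicate point---that $u_c$ may vanish on a codimension-two locus inside $\Mdelta(\{c'\})$ but is a unit generically there, hence $\omega_c$ has no pole along $\Mdelta(\{c'\})$ and its residue vanishes---is exactly right, and the affineness of $\Mdelta_{0,S}$ (Theorem~\ref{thmbrownaffine}) guarantees $u_c$ has no poles to worry about.

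In summary: your proof is a legitimate and self-contained justification of a statement the paper merely imports from \cite{brownPhD}; the only external input is the classical generation theorem for arrangement complements.
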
		
		
		We note that every differential form~$\omega_{c_1}\wedge\cdots\wedge\omega_{c_k}$ is a logarithmic form on~$(\Mdelta_{0,S},\partial\Mdelta_{0,S})$.
		
		\begin{rem}
		It is convenient to represent a monomial~$\omega_{c_1}\wedge\cdots\wedge\omega_{c_k}$, up to a sign, by the picture of the set of chords~$\{c_1,\ldots,c_k\}$, as in Figure~\ref{figuremonomial}, where the chords are pictured in dashed lines.
		\end{rem}
		
		\begin{figure}[h!!]
		\def\svgwidth{.26\textwidth}
		\begin{center}
\begin{tikzpicture}[scale=0.27]

\draw[thick] (0:10) -- (30:10) -- (60:10) -- (90:10) -- (120:10) -- (150:10) -- (180:10) 
-- (210:10) -- (240:10) -- (270:10) -- (300:10) -- (330:10) -- (360:10);

\draw[thick, dashed] (60:10) -- (270:10) ;
\draw[thick, dashed] (60:10) -- (0:10) ;
\draw[thick, dashed] (30:10) -- (300:10) ;

\draw[thick, dashed] (90:10) -- (270:10) ;

\draw[thick, dashed] (150:10) -- (270:10);
\draw[thick, dashed] (120:10) -- (210:10);

\end{tikzpicture}

		\end{center}
		\caption{A monomial (up to a sign) in~$H^6(\M_{0,10})$.}\label{figuremonomial}
		\end{figure}
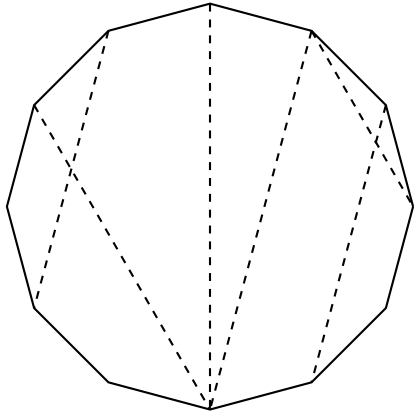
		
		\begin{rem}
		The ideal of relations between the classes~$\omega_c$ in~$H^\bullet(\M_{0,S})$ can be described in pure combinatorial terms with sets of chords that cross completely, see~\cite[Proposition 6.2]{brownPhD}. Surprisingly enough, this will not play any role in the sequel.
		\end{rem}
		
		The decomposition morphisms of the dihedral gravity cooperad are easily computed in terms of the symbols~$\omega_c$. They are completely determined by the infinitesimal ones, which correspond to dissections made up of one chord.
		
		\begin{prop}\label{propresiduechords}
		Let~$c$ be a chord which dissects~$(S,\delta)$ into two polygons~$p_0$ and~$p_1$. The corresponding dihedral decomposition morphism 
		$$\Delta_{\{c\}}:\mathcal{C}(S,\delta)\rightarrow  \mathcal{C}(E_0,\delta_0)\otimes \mathcal{C}(E_1,\delta_1)$$ 
		is given by 
		\begin{enumerate}[1)]
		\item~$\Delta_{\{c\}}(\omega_{c_1}\wedge\cdots\wedge\omega_{c_k})=0\;$ if~$c\notin\{c_1,\ldots,c_k\}$;
		\item~$\Delta_{\{c\}}(\omega_c\wedge \omega_{c_1}\wedge\cdots \wedge \omega_{c_{k}})=0\;$ if~$c$ crosses some chord~$c_i$, for~$i=1,\ldots,k$;
		\item~$\Delta_{\{c\}}( X_0\wedge\omega_c\wedge X_1)=X_0\otimes X_1\;$ if~$X_i$ is a monomial formed with chords in~$p_i$,~$i=0,1$.
		\end{enumerate}
		\end{prop}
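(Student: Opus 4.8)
The plan is to compute the residue morphisms geometrically, using the fact that on the affine variety $\Mdelta_{0,S}$ we may work with global logarithmic forms, and that the residue in mixed Hodge theory is computed on such forms by the local formula~(\ref{eqreslocal}). The key input is that the divisor $\Mdelta(\{c\})$ is \emph{globally} cut out by the single equation $u_c=0$, so that $\omega_c = \frac{1}{2\pi i}\frac{du_c}{u_c}$ plays the role of the form $\frac{dz_1}{z_1}$ in the local model, and that a monomial $\omega_{c_1}\wedge\cdots\wedge\omega_{c_k}$ is logarithmic on $(\Mdelta_{0,S},\partial\Mdelta_{0,S})$.

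First I would treat case~1). If $c\notin\{c_1,\ldots,c_k\}$, then near the generic point of $\Mdelta(\{c\})$ all the functions $u_{c_i}$ are invertible (the divisors $\Mdelta(\{c_i\})$ and $\Mdelta(\{c\})$ are distinct irreducible components, or else do not meet $\Mdelta(\{c\})$ at all), so the form $\omega_{c_1}\wedge\cdots\wedge\omega_{c_k}$ extends holomorphically across $\Mdelta(\{c\})$; hence in the decomposition $\omega = \frac{dz_1}{z_1}\wedge\alpha + \beta$ from Section~2.4 it contributes only to $\beta$, and its residue vanishes. Next, for case~3), write $X_0 = \omega_{a_1}\wedge\cdots$ with all $a_j$ chords of $p_0$ and $X_1 = \omega_{b_1}\wedge\cdots$ with all $b_j$ chords of $p_1$. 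Taking $z_1 := u_c$ as a local coordinate transverse to $\Mdelta(\{c\})$, the form $X_0\wedge\omega_c\wedge X_1$ equals $\pm\frac{dz_1}{z_1}\wedge(X_0\wedge X_1)$ up to reordering, and $X_0\wedge X_1$ restricts to $\Mdelta(\{c\})\cong\M^{\delta_0}_{0,E_0}\times\M^{\delta_1}_{0,E_1}$ as precisely the external product of the class $X_0\in H^\bullet(\M_{0,E_0})$ and the class $X_1\in H^\bullet(\M_{0,E_1})$ — here one uses that a chord of $p_i$ is again a chord of the sub-polygon, and that the restriction of $u_a$ to the divisor agrees with the corresponding function $u_a$ on the factor $\M^{\delta_i}_{0,E_i}$ (this is part of the compatibility of the functions $u_c$ with the product decomposition of strata). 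Through the K\"unneth identification in~(\ref{eqRes12}), and after bookkeeping the Tate twist $(-1)$ and the Koszul sign, this gives $\Delta_{\{c\}}(X_0\wedge\omega_c\wedge X_1) = X_0\otimes X_1$ exactly as stated (the sign in the ordering of the $\omega$'s is absorbed into the chosen trivialization $\det(P(\di))\simeq\Q$).

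Finally, for case~2), the point is that if $c$ crosses some $c_i$, then the product $u_c\, u_{c_i}$ vanishes identically along a larger stratum, or more precisely there is no stratum of $\Mdelta_{0,S}$ of codimension one on which both $u_c=0$ and $\omega_{c_i}$ has a pole — geometrically, the divisors $\Mdelta(\{c\})$ and $\Mdelta(\{c_i\})$ are disjoint when $c$ and $c_i$ cross, since a dissection must consist of \emph{non-crossing} chords, so $\{c,c_i\}\notin\mathsf{Diss}(S,\delta)$ and the corresponding codimension-two stratum is empty. Thus on a neighbourhood of the generic point of $\Mdelta(\{c\})$, the function $u_{c_i}$ is invertible, and we are reduced to case~1) applied after pulling $\omega_c$ to the front: $\omega_c\wedge\omega_{c_1}\wedge\cdots\wedge\omega_{c_k} = \pm\,\omega_{c_i}\wedge(\omega_c\wedge\cdots)$ with $\omega_{c_i}$ holomorphic near $\Mdelta(\{c\})$, forcing the residue to vanish.

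I expect the main obstacle to be case~2), or rather making precise the claim that crossing chords give \emph{disjoint} divisors in $\Mdelta_{0,S}$ — this is exactly the content of the definition of Brown's moduli space as the union of strata indexed by \emph{dihedral} trees, equivalently by non-crossing dissections, but it needs to be stated carefully in terms of the functions $u_c$ (Brown's presentation shows that the relations among the $\omega_c$ come precisely from completely crossing families of chords, and this combinatorial fact is the geometric shadow of case~2)). The sign bookkeeping in case~3) — reconciling the Koszul sign $(-1)^{a-1}$ from~(\ref{eqRes12}), the desuspension signs, and the determinant trivialization — is routine but tedious, and as the authors note, is best left to the reader.
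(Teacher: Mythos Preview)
Your treatment of cases~1) and~3) is essentially the paper's argument: no pole means no residue, and in case~3) one restricts $X_0\wedge X_1$ to the divisor using the compatibility of the functions~$u_a$ with the product decomposition (the paper cites Brown's Lemma~2.6 for this pullback formula, which you invoke informally).

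There is, however, a genuine gap in your argument for case~2). You observe correctly that when $c$ and $c_i$ cross, the divisors $\Mdelta(\{c\})$ and $\Mdelta(\{c_i\})$ are disjoint, so that $u_{c_i}$ is invertible and $\omega_{c_i}$ is \emph{holomorphic} near $\Mdelta(\{c\})$. But holomorphicity of $\omega_{c_i}$ does not force the residue to vanish, and this does not reduce to case~1): the form still contains $\omega_c$ and therefore still has a pole along $\Mdelta(\{c\})$. Concretely, writing $\omega_c\wedge\omega_{c_1}\wedge\cdots\wedge\omega_{c_k}=\frac{1}{2\pi i}\frac{du_c}{u_c}\wedge(\omega_{c_1}\wedge\cdots\wedge\omega_{c_k})$, the residue is the \emph{restriction} $(\omega_{c_1}\wedge\cdots\wedge\omega_{c_k})\big|_{\Mdelta(\{c\})}$, and a holomorphic $1$-form need not restrict to zero on a divisor.

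What is actually needed --- and what the paper uses --- is the stronger fact that $\omega_{c_i}$ restricts to \emph{zero} on $\Mdelta(\{c\})$ when $c_i$ crosses $c$. This is again a consequence of Brown's Lemma~2.6: the explicit relations among the dihedral coordinates imply that $u_{c_i}$ becomes \emph{constant} (not merely invertible) along $\{u_c=0\}$, whence $d\log u_{c_i}$ vanishes there. Your disjointness observation is a necessary ingredient but not sufficient; you need this finer input about the behaviour of $u_{c_i}$ on the boundary stratum.
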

		
		\begin{proof}\leavevmode
		
		\begin{enumerate}[1)]
		\item This is because the differential form~$\omega_{c_1}\wedge\cdots\wedge\omega_{c_k}$ has no pole along~$\Mdelta(\{c\})$ if~$c\notin\{c_1,\ldots,c_k\}$.
		\item By definition of the residue morphisms,~$\Delta_{\{c\}}(\omega_c\wedge \omega_{c_1}\wedge\cdots \wedge \omega_{c_{k}})$ is, up to a sign, the restriction of the differential form~$\omega_{c_1}\wedge\cdots \wedge \omega_{c_{k}}$ on~$\Mdelta(\{c\})$. If~$c$ crosses some chord~$c_i$, for~$i=1,\ldots,k$, then the proof of~\cite[Lemma 2.6]{brownPhD} implies that~$\omega_{c_i}$ is zero when restricted to~$\Mdelta(\{c\})$, hence the result.
		\item Let us denote by~$a-1$ and~$b-1$ the respective degrees of~$X_0$ and~$X_1$, so that they respectively live in degree~$a$ and~$b$ in~$\mathcal{C}$. Then we get~$X_0\wedge\omega_c\wedge X_1=(-1)^{a-1}\omega_c\wedge X_0\wedge X_1$, whose residue on~$\Mdelta(\{c\})$ is the restriction of~$(-1)^{a-1}X_0\wedge X_1$ on~$\Mdelta(\{c\})$. Note that the sign~$(-1)^{a-1}$ is canceled by the Koszul sign in the definition (\ref{eqRes12}) of~$\Delta_{\{c\}}$. By the proof of~\cite[Lemma 2.6]{brownPhD}, the pullback morphism~$\mathcal{O}(\Mdelta_{0,S})\rightarrow\mathcal{O}(\M^{\delta_0}_{0,E_0})\otimes\mathcal{O}(\M^{\delta_1}_{0,E_1})$ is given by~$u_{c_0}\mapsto u_{c_0}\otimes 1$ and~$u_{c_1}\mapsto 1\otimes u_{c_1}$, for~$c_i$ a chord in~$p_i$ for~$i=0,1$. The result follows.
		\end{enumerate}		
		\end{proof}
		
		\begin{rem}
		The formula of Proposition~\ref{propresiduechords} 3), is easy to represent pictorially: if~$c$ is a chord that is not crossed by any other, applying~$\Delta_{\{c\}}$ has the effect of cutting the polygon along~$c$ into two parts, see Figure~\ref{figuredecomposition}.
		\end{rem}
		
		\begin{figure}[h!!]
$$	
\vcenter{\hbox{\begin{tikzpicture}[scale=0.27]
\draw[thick] (0:10) -- (30:10) -- (60:10) -- (90:10) -- (120:10) -- (150:10) -- (180:10) 
-- (210:10) -- (240:10) -- (270:10) -- (300:10) -- (330:10) -- (360:10);

\draw[thick, dashed] (60:10) -- (270:10) ;
\draw[thick, dashed] (60:10) -- (0:10) ;
\draw[thick, dashed] (30:10) -- (300:10) ;

\draw[thick, dashed, blue] (90:10) -- (270:10) node[midway,  left] {\scalebox{1}{$\mathbf{c}$\ }};

\draw[thick, dashed] (150:10) -- (270:10);
\draw[thick, dashed] (120:10) -- (210:10);
\end{tikzpicture}}}
\quad \xymatrix{\ar@{|->}[r]^{\Delta_{\{c\}}}&}\quad
\vcenter{\hbox{\begin{tikzpicture}[scale=0.27]

\draw[thick]  (90:10) -- (120:10) -- (150:10) -- (180:10) 
-- (210:10) -- (240:10) -- (270:10) -- (90:10) ;

\draw[thick, dashed] (150:10) -- (270:10);
\draw[thick, dashed] (120:10) -- (210:10);
\end{tikzpicture}}}
\quad \otimes \quad
\vcenter{\hbox{\begin{tikzpicture}[scale=0.27]

\draw[thick] (0:10) -- (30:10) -- (60:10) -- (90:10) -- (270:10) -- (300:10) -- (330:10) -- (360:10);

\draw[thick, dashed] (60:10) -- (270:10) ;
\draw[thick, dashed] (60:10) -- (0:10) ;
\draw[thick, dashed] (30:10) -- (300:10) ;
\end{tikzpicture}}}
$$
		\caption{The dihedral decomposition~$\Delta_{\{c\}}:\mathcal{C}(10,\delta)\rightarrow\mathcal{C}(6,\delta)\otimes\mathcal{C}(6,\delta)$ applied to a monomial.}\label{figuredecomposition}
		\end{figure}
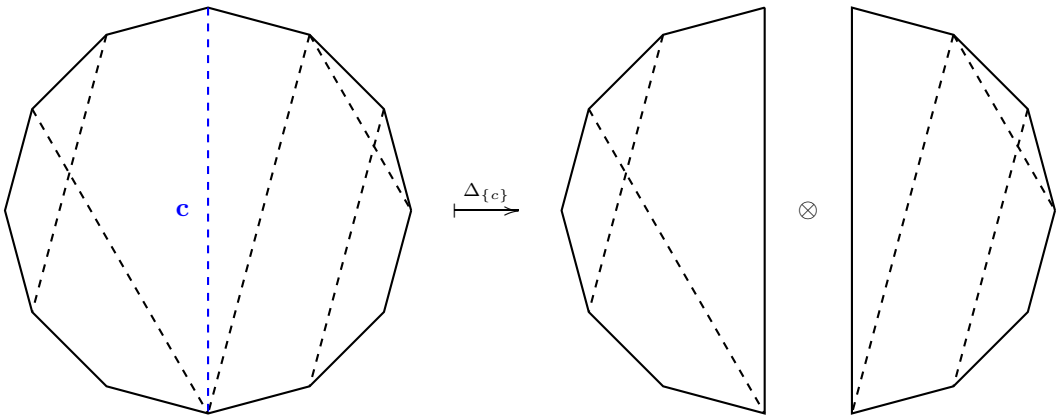

	\subsection{The residual filtration}
	
		\begin{defi}[Residual chord] Let~$\{c_1,\ldots,c_k\}$ be a set of chords of a polygon~$(S,\delta)$. We say that~$c_i$ is a \textit{residual chord} in~$\{c_1,\ldots,c_k\}$ if~$c_i$ is not crossed by any~$c_j$, for~$j\neq i$. 
		\end{defi}

 		\begin{defi}[Residual filtration]
		For every integer~$r$, we denote by 
		$$R_r \mathcal{C}(S,\delta)\subset \mathcal{C}(S,\delta)$$
		the subspace spanned by monomials~$\omega_{c_1}\wedge\cdots\wedge\omega_{c_k}$ with at most~$r$ residual chords in~$\{c_1,\ldots,c_k\}$. This gives a finite filtration
		$$0=R_{-1}\mathcal{C}(S,\delta)\subset R_0\mathcal{C}(S,\delta) \subset R_1\mathcal{C}(S,\delta)\subset \cdots  \subset\mathcal{C}(S,\delta)\ ,$$ called the \emph{residual filtration}. 
		\end{defi}

		\begin{lem}\label{lemresiduefiltration}
		For a dissection~$\mathfrak{d}\in \mathrm{Diss}_k(S,\delta)$  of~$(S,\delta)$ of cardinality~$k$, the dihedral decomposition 
		$$\Delta_\di: \mathcal{C}(S,\delta) \rightarrow \mathcal{C}(\di)=\mathcal{C}(E_0,\delta_0)\otimes\cdots\otimes\mathcal{C}(E_k,\delta_k)$$
		sends~$R_r\mathcal{C}(S,\delta)$ to~$R_{r-k} \mathcal{C}(\di)$.
		\end{lem}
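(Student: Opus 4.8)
The plan is to reduce the statement to the case $k=1$ of a single chord, using coassociativity and an induction on $k$, and then to settle that case by a direct computation based on the explicit formulas of Proposition~\ref{propresiduechords}.

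For the reduction, I would argue by induction on $k$: the case $k=0$ is trivial since $\Delta_\emptyset=\mathrm{id}$. For $k\geq 1$, I pick a chord $c\in\di$, which dissects $(S,\delta)$ into two sub-polygons $p_0$ and $p_1$. Because the chords of $\di\setminus\{c\}$ do not cross $c$, each of them lies entirely in $p_0$ or in $p_1$, so $\di\setminus\{c\}$ splits into a dissection $\di_0$ of $p_0$ with $k_0$ chords and a dissection $\di_1$ of $p_1$ with $k_1$ chords, with $k_0+k_1=k-1$. Coassociativity of the dihedral cooperad gives $\Delta_\di=(\Delta_{\di_0}\otimes\Delta_{\di_1})\circ\Delta_{\{c\}}$. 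Granting the one-chord case, $\Delta_{\{c\}}$ maps $R_r\mathcal{C}(S,\delta)$ into $R_{r-1}\mathcal{C}(\{c\})=\sum_{a_0+a_1=r-1}R_{a_0}\mathcal{C}(p_0)\otimes R_{a_1}\mathcal{C}(p_1)$, and applying the induction hypothesis to $\di_0$ and $\di_1$ sends each summand into $R_{a_0-k_0}\mathcal{C}(\di_0)\otimes R_{a_1-k_1}\mathcal{C}(\di_1)$, which is contained in $R_{r-k}\mathcal{C}(\di)$ by the definition of the extended filtration on $\mathcal{C}(\di)=\mathcal{C}(\di_0)\otimes\mathcal{C}(\di_1)$. (When $r<k$ all the groups involved vanish, so there is nothing to check.)

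For the one-chord case, let $c$ dissect $(S,\delta)$ into $p_0,p_1$ and let $M=\omega_{c_1}\wedge\cdots\wedge\omega_{c_m}$ be a monomial with at most $r$ residual chords. If $c\notin\{c_1,\ldots,c_m\}$, or if $c$ crosses some $c_i$, then $\Delta_{\{c\}}(M)=0$ by parts 1) and 2) of Proposition~\ref{propresiduechords}. Otherwise $c$ is a residual chord of $M$, and every $c_i\neq c$, not crossing $c$, has both endpoints on one closed side of $c$ and is hence a chord of $p_0$ or of $p_1$; so I may write $M=\pm X_0\wedge\omega_c\wedge X_1$ with $X_i$ a monomial in chords of $p_i$, and $\Delta_{\{c\}}(M)=\pm X_0\otimes X_1$ by part 3) of Proposition~\ref{propresiduechords}. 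The crucial observation is that a chord which is residual in $X_0$ is already residual in $M$: it lies in $p_0$, hence is not crossed by $c$ (an edge of $p_0$), nor by any chord of $X_1$ (which lies on the other side of $c$), nor by another chord of $X_0$ (by hypothesis); and symmetrically for $X_1$. Therefore the residual chords of $X_0$ together with those of $X_1$ form a disjoint union of subsets of the set of residual chords of $M$, none of which is $c$, so their total number is at most $r-1$. Hence $X_0\otimes X_1$ belongs to $R_{r-1}\mathcal{C}(\{c\})$, as required.

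The coassociativity bookkeeping and the handling of the extended filtration are routine; the only real content is the inheritance of residuality from a sub-polygon of a one-chord dissection to the whole polygon, together with the elementary fact that two chords separated by a chord of a polygon never cross. I do not expect any serious obstacle, the main care being to keep the filtration indices consistent throughout the induction.
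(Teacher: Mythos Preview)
Your proof is correct and follows essentially the same approach as the paper: both reduce to the one-chord case by iterating infinitesimal decomposition maps (coassociativity), and then use Proposition~\ref{propresiduechords} to conclude that $\Delta_{\{c\}}$ either kills a monomial or removes exactly the residual chord $c$. The paper's proof is a one-sentence sketch of this argument; you have simply filled in the details, including the verification that residual chords of $X_0$ and $X_1$ remain residual in the original monomial.
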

		
		\begin{proof}
Since any decomposition map can be obtained by iterating infinitesimal decomposition maps, it is enough to do the case~$k=1$, which follows from Proposition~\ref{propresiduechords}: applying~$\Delta_{\{c\}}$ to a monomial either gives zero or erases a residual chord from the monomial.
		\end{proof}
		
		\begin{ex}
		In Figure~\ref{figuredecomposition}, the left-hand side lives in~$R_2\mathcal{C}(10,\delta)$ and the right-hand side lives in~$R_0\mathcal{C}(6,\delta)\otimes R_1\mathcal{C}(6,\delta)$.
		\end{ex}
		
		\begin{thm}\label{thmmain}
		For every integer~$r$ and every dihedral set~$(S,\delta)$, the morphism
		$$\Phi:\mathrm{gr}_r^R\, \mathcal{C}(S,\delta)\xrightarrow{\bigoplus \Delta_{\di}} \bigoplus_{\di\in\mathrm{Diss}_r(S,\delta)} R_0\mathcal{C}(\di)~$$
		is an isomorphism.
		\end{thm}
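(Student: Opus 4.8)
The plan is to establish the two halves separately: surjectivity of $\Phi$ by writing down explicit preimages of the monomial spanning set of the target, and injectivity by splitting the source, compatibly with $\Phi$, into blocks indexed by the dissections $\di\in\mathsf{Diss}_r(S,\delta)$ on which $\Phi$ is visibly invertible.

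For surjectivity I would fix $\di=\{c_1,\dots,c_r\}$ with sub-polygons $p_0,\dots,p_r$ and a basis tensor $X_0\otimes\cdots\otimes X_r$ of $R_0\mathcal{C}(\di)$, each $X_i$ a monomial in the classes $\omega_c$ for $c$ a chord of $p_i$ with no residual chord inside $p_i$. Reading all these chords inside $(S,\delta)$, set $M:=\omega_{c_1}\wedge\cdots\wedge\omega_{c_r}\wedge X_0\wedge\cdots\wedge X_r$. The combinatorial heart of this step is that the residual chords of $M$ are \emph{exactly} $c_1,\dots,c_r$: each $c_j$ is uncrossed by every other chord of $M$ because the chords of the $X_i$ lie strictly inside the sub-polygons, while conversely no chord of an $X_i$ is residual in $M$ since it is already crossed inside $X_i$. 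Hence $[M]\in\mathrm{gr}_r^R\mathcal{C}(S,\delta)$, and Proposition~\ref{propresiduechords} computes $\Phi([M])$: the map $\Delta_{\di'}$ is nonzero on $M$ only if $\di'$ consists of residual chords of $M$, which by cardinality forces $\di'=\di$; and iterating part~3) gives $\Delta_{\di}(M)=\pm\,X_0\otimes\cdots\otimes X_r$. As these tensors span the target, $\Phi$ is surjective.

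For injectivity, for each $\di\in\mathsf{Diss}_r(S,\delta)$ let $W_\di\subseteq\mathcal{C}(S,\delta)$ be the span of the monomials whose set of residual chords is exactly $\di$, and let $\overline W_\di$ be its image in $\mathrm{gr}_r^R\mathcal{C}(S,\delta)$. Since every monomial with exactly $r$ residual chords lies in some $W_\di$, we get $\mathrm{gr}_r^R\mathcal{C}(S,\delta)=\sum_{\di}\overline W_\di$, and by the computation above $\Phi(\overline W_\di)\subseteq R_0\mathcal{C}(\di)$. Each monomial of $W_\di$ being, up to sign, of the form $\omega_{c_1}\wedge\cdots\wedge\omega_{c_r}\wedge X_0\wedge\cdots\wedge X_r$ with $X_i$ a no-residual-chord monomial in $p_i$, I would then build a linear map
$$\Psi_\di\ :\ R_0\mathcal{C}(\di)=R_0\mathcal{C}(E_0,\delta_0)\otimes\cdots\otimes R_0\mathcal{C}(E_r,\delta_r)\ \longrightarrow\ \mathcal{C}(S,\delta)$$
sending $X_0\otimes\cdots\otimes X_r$ to $\pm\,\omega_{c_1}\wedge\cdots\wedge\omega_{c_r}\wedge X_0\wedge\cdots\wedge X_r$, whose image is precisely $W_\di$. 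Granting that $\Psi_\di$ is well defined, Proposition~\ref{propresiduechords}~3) shows that the composite of $\overline\Psi_\di$ with $\Phi$ (landing in the $\di$-summand) is $\pm\mathrm{id}_{R_0\mathcal{C}(\di)}$; hence $\overline\Psi_\di$ is injective, so an isomorphism onto $\overline W_\di$, and $\Phi|_{\overline W_\di}\colon\overline W_\di\xrightarrow{\ \sim\ }R_0\mathcal{C}(\di)$. Finally the sum $\mathrm{gr}_r^R\mathcal{C}(S,\delta)=\sum_\di\overline W_\di$ is direct: a relation $\sum_\di x_\di=0$ with $x_\di\in\overline W_\di$ maps under $\Phi$ to an element of $\bigoplus_\di R_0\mathcal{C}(\di)$ with $\di$-component $\Phi(x_\di)$, so each $\Phi(x_\di)=0$ and hence $x_\di=0$. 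Thus $\Phi=\bigoplus_\di\Phi|_{\overline W_\di}$ is an isomorphism.

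The hard part will be the one non-combinatorial input buried in the last paragraph, namely that $\Psi_\di$ is well defined: any linear relation among products of the $\omega_c$ with $c$ a chord of a fixed sub-polygon $p_i$ that holds in $H^\bullet(\M_{0,E_i})$ must continue to hold in $H^\bullet(\M_{0,S})$ once the chords are read inside $(S,\delta)$ — equivalently, the subalgebra of $H^\bullet(\M_{0,S})$ generated by the $\omega_c$ for $c$ a chord of $p_i$ is isomorphic to $H^\bullet(\M_{0,E_i})$. This is the one place where Brown's presentation of the cohomology ring \cite[Proposition~6.2]{brownPhD} would enter: its relations are generated by completely crossing families of chords, a notion intrinsic to the sub-polygon, so they are inherited in both directions. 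Once this compatibility is granted, everything else is bookkeeping with residual chords and Koszul signs.
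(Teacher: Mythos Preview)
Your argument is correct, but the route differs from the paper's in exactly the place you flag as the hard part. You define $\Psi_\di$ by the naive ``read the chords of $p_i$ inside $(S,\delta)$'' map, and then appeal to Brown's presentation of the relations in $H^\bullet(\M_{0,S})$ to see that this is well defined. The paper deliberately avoids this (see the remark right after Proposition~\ref{propresiduechords}: ``Surprisingly enough, this will not play any role in the sequel''). Instead, for each sub-polygon $p_i$ the paper picks a matching side $s_c\in S$ for every boundary chord $c$ of $p_i$, producing a subset $S'\subset S$ with $(E_i,\delta_i)\cong (S',\delta')$, and sets $\psi_i:\mathcal{C}(E_i,\delta_i)\to\mathcal{C}(S,\delta)$ to be the \emph{pullback} along the forgetful morphism $\M_{0,S}\to\M_{0,S'}$. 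This map is well defined for free, since it comes from geometry. The technical Lemma then shows that $\psi_i(X_i)$ differs from your naive $X_i$ only by monomials containing a chord that crosses some $c_j\in\di$; after wedging with $\omega_{c_1}\wedge\cdots\wedge\omega_{c_r}$ those correction terms land in $R_{r-1}$, so the paper's $\Psi_\di$ and yours agree in $\mathrm{gr}_r^R$.

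What each approach buys: yours is more direct and makes the block decomposition $\mathrm{gr}_r^R\mathcal{C}(S,\delta)=\bigoplus_\di \overline W_\di$ visible, but it imports the full combinatorial description of the relation ideal as an extra ingredient. The paper's approach trades that for a short geometric lemma about forgetful maps (Lemma~\ref{lempullbackformula} and Lemma~\ref{lemtechnical}) and never touches the relations at all; in particular it would go through verbatim for any presentation of $H^\bullet(\M_{0,S})$ by the classes $\omega_c$, regardless of the shape of the relations. If you keep your write-up, you should state precisely which feature of Brown's relations you use (that the generating relations involve only chords inside a common completely-crossing configuration, hence are intrinsic to any sub-polygon containing them) rather than leave it as a parenthetical.
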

		
		We postpone the proof of this theorem to Section~\ref{parmainresult}, after we have introduced a technical tool.
		
	\subsection{The forgetful maps}
	
		Let~$S$ be a finite set and~$S'\subset S$ be a subset. This inclusion gives rises to a forgetful morphism
		$$f:\M_{0,S}\rightarrow \M_{0,S'}$$
		and hence a pullback in cohomology
		\begin{equation}\label{eqpullback}
		f^*:H^\bullet(\M_{0,S'})\rightarrow H^\bullet(\M_{0,S})\ , 
		\end{equation}
		which is a map of graded algebras. Now suppose that we are given a dihedral structure~$\delta$ on~$S$ and let~$\delta'$ be the induced dihedral structure on~$S'$. We view~$(S',\delta')$ as the decorated polygon obtained by contracting the sides of~$(S,\delta)$ that are not in~$S'$.  For a chord~$c$  of~$(S,\delta)$ and  a chord~$c'$ of~$(S',\delta')$, we write~$c\leadsto c'$ if this contraction transforms~$c$ into~$c'$.
				
		\begin{lem}\label{lempullbackformula}\leavevmode
		\begin{enumerate}
		\item The pullback morphism~$f^*$ is given, for~$c'$ a chord of~$(S',\delta')$, by
		$$f^*(\omega_{c'}) = \sum_{c\leadsto c'} \omega_{c}\ .$$
		\item The pullback morphism~$f^*$ is compatible with the residual filtration~$R$.
		\end{enumerate}
		\end{lem}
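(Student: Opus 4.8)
\medskip

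The plan is to prove part~(1) first, as part~(2) will follow from it together with an elementary fact about crossing chords. To prove part~(1), I would begin by factoring $f$ as a composite of one-point forgetful maps $\M_{0,S}\to\M_{0,S_1}\to\cdots\to\M_{0,S'}$: the relation $\leadsto$ is transitive along such a composite, and the right-hand side $\sum_{c\leadsto c'}\omega_c$ is compatible with composition, so it suffices to treat one step. Thus I assume $S=S'\sqcup\{\star\}$, so that $(S',\delta')$ is obtained from $(S,\delta)$ by contracting the single side $\star$; let $x,y$ denote the two vertices of the polygon $(S,\delta)$ adjacent to $\star$, which are identified to a single vertex $\bar x$ of $(S',\delta')$. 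Using Brown's description \cite[\S 2]{brownPhD} of the functions $u_c$ as cross-ratios of the marked points, the key step is the identity
$$ f^*(u_{c'})\;=\;\prod_{c\,\leadsto\, c'} u_c \qquad\text{in }\mathcal O(\M_{0,S}), $$
valid up to a nonzero scalar that is irrelevant for what follows. Indeed, if $c'$ does not have $\bar x$ as an endpoint there is a unique chord $c\leadsto c'$, namely $c'$ read off on $(S,\delta)$, and $f^*(u_{c'})=u_c$ because $u_{c'}$ is literally the same cross-ratio before and after forgetting $\star$; if $c'=\{\bar x,z\}$, then $\{c:c\leadsto c'\}$ consists of the (one or two) chords among $\{x,z\}$ and $\{y,z\}$ that are genuine chords of $(S,\delta)$, and a direct cross-ratio computation shows that the product of the corresponding $u$-functions collapses to $u_{c'}$. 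Applying $\tfrac{1}{2\pi i}\,\mathrm{d}\log(-)$ to this identity, and using that $f^*$ intertwines $\mathrm{d}\log$, yields $f^*(\omega_{c'})=\sum_{c\leadsto c'}\omega_c$, which is part~(1).

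For part~(2), recall that $f^*\colon H^\bullet(\M_{0,S'})\to H^\bullet(\M_{0,S})$ is a morphism of graded algebras and that $R_r\mathcal C(S',\delta')$ is spanned by monomials $\omega_{c'_1}\wedge\cdots\wedge\omega_{c'_k}$ having at most $r$ residual chords among $\{c'_1,\dots,c'_k\}$. By part~(1),
$$ f^*\bigl(\omega_{c'_1}\wedge\cdots\wedge\omega_{c'_k}\bigr)\;=\;\sum\;\omega_{c_1}\wedge\cdots\wedge\omega_{c_k}, $$
the sum ranging over all tuples $(c_1,\dots,c_k)$ with $c_i\leadsto c'_i$, so it suffices to show that each such monomial lies in $R_r\mathcal C(S,\delta)$. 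Here the crucial observation is that the contraction of sides induces a surjection of vertex sets $V(S,\delta)\twoheadrightarrow V(S',\delta')$ which is monotone for the cyclic order; hence four distinct vertices of $(S',\delta')$ occurring in some cyclic order always lift to four vertices of $(S,\delta)$ in that same cyclic order. Consequently, if $c'_i$ and $c'_j$ cross then every $c_i\leadsto c'_i$ and $c_j\leadsto c'_j$ cross as well; contrapositively, if $c_i$ is residual in $\{c_1,\dots,c_k\}$ then $c'_i$ is residual in $\{c'_1,\dots,c'_k\}$. Since the $c'_i$ are pairwise distinct (otherwise the monomial vanishes), the assignment $c_i\leftrightarrow c'_i$ is a bijection carrying residual chords to residual chords, so the number of residual chords among the $c_i$ is at most the number among the $c'_i$, i.e.\ at most $r$. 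This gives $f^*\bigl(R_r\mathcal C(S',\delta')\bigr)\subseteq R_r\mathcal C(S,\delta)$.

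The substantive content is entirely in part~(1): once the product formula $f^*(u_{c'})=\prod_{c\leadsto c'}u_c$ is established, the rest is formal, and part~(2) is pure polygon combinatorics. The main obstacle is therefore the bookkeeping underlying that formula --- pinning down, in Brown's normalisation of the coordinates $u_c$, exactly which chords of the larger polygon contract to a given chord of the smaller one and verifying that the corresponding cross-ratios multiply correctly. An alternative route avoiding explicit coordinates is available: $f$ extends to $\bar f\colon\Mbar_{0,S}\to\Mbar_{0,S'}$, the classes $\omega_c$ form a basis of $H^1(\M_{0,S})$ (by Brown's spanning result and a dimension count) on which the residue at $\Mdelta(\{c''\})$ reads off the $\omega_{c''}$-coefficient, and computing the divisor of $f^*(u_{c'})$ on $\Mdelta_{0,S}$ by pulling back boundary divisors along $\bar f$ again produces $\sum_{c\leadsto c'}\Mdelta(\{c\})$; this reduces part~(1) to the standard fact that such boundary divisor pullbacks are reduced.
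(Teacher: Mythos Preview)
Your proof is correct and follows essentially the same approach as the paper: for part~(1) the paper simply cites Brown's result \cite[Lemma 2.9]{brownPhD} for the identity $f^*(u_{c'})=\prod_{c\leadsto c'}u_c$ and takes the logarithmic derivative, whereas you sketch a proof of that identity by reducing to one-point forgetful maps; for part~(2) the paper asserts without further detail that each set $\{c_1,\ldots,c_k\}$ in the expansion has at most as many residual chords as $\{c'_1,\ldots,c'_k\}$, which is exactly the combinatorial argument (crossings are preserved under contraction) that you spell out.
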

		
		\begin{proof}\leavevmode
		\begin{enumerate}
		\item At the level of global functions, the pullback~$\mathcal{O}(\M_{0,S'})\rightarrow \mathcal{O}(\M_{0,S})$ is computed in~\cite[Lemma 2.9]{brownPhD}, and is given by 
		$$u_{c'}\mapsto \prod_{c\leadsto c'} u_c\ .$$
		The result then follows from taking the logarithmic derivative.
		\item According to~$(1)$, the pullback of a monomial is given by
		$$f^*(\omega_{c'_1}\wedge\cdots\wedge\omega_{c'_k}) = \sum_{\{c_i\leadsto c'_i\}}\omega_{c_1}\wedge\cdots\wedge \omega_{c_k}.$$
		By construction, every set~$\{c_1,\ldots,c_k\}$ contains at most as many residual chords as~$\{c'_1,\ldots,c'_k\}$, hence the result.
		\end{enumerate}
		\end{proof}
		
	\subsection{A technical lemma}
	
		Let us fix a polygon~$(S,\delta)$.	 Let~$(E, \delta_E)$ be an inscribed polygon inside~$(S,\delta)$, that is a polygon whose sides are either sides of~$(S,\delta)$ or chords of~$(S,\delta)$, see Figure~\ref{figureexampletech}. We let~$E_{\textnormal{sides}}\subset E$ and~$E_{\textnormal{chords}}\subset E$ denote the set of sides of~$(E, \delta_E)$ which are respectively sides of~$(S,\delta)$ and chords of~$(S,\delta)$. In such a situation, we have a partition
		$$S\setminus E_{\textnormal{sides}} = \bigsqcup_{c\in E_{\textnormal{chords}}} S_c~$$
		into components~$S_c$ delimited by~$c$, that are outside of the inscribed polygon~$(E,\delta_E)$, and connected with respect to the dihedral order~$\delta$.\\
		
		For every chord~$c\in E_{\textnormal{chords}}$, let us choose a \emph{matching side}~$s_c\in S_c$, and write 
		$$S':=E_{\textnormal{sides}}\sqcup \{s_c \, ,\, c\in E_{\textnormal{chords}}\} \,\subset S\ .$$
		We let~$\delta'$ be the dihedral structure on~$S'$ induced by~$\delta$. Identifying a chord~$c$ and the matching side~$s_c$ gives rise to natural dihedral isomorphism~$(E,\delta_E)\cong (S',\delta')$. 
		
		\begin{ex}
		In Figure~\ref{figureexampletech}, the inscribed polygon is shaded and pictured in blue, with~$E_{\textnormal{chords}}=\{c_1,c_2,c_3\}$ and a possible choice of matching sides~$s_{c_1}$,~$s_{c_2}$,~$s_{c_3}$.
		\end{ex}
		
		\begin{figure}[h!!]
		\def\svgwidth{.35\textwidth}
		\begin{center}
\begin{tikzpicture}[scale=0.27]

\draw[thin] (0:10) -- (30:10) -- (60:10) -- (90:10) -- (120:10) -- (150:10) -- (180:10) 
-- (210:10) -- (240:10) -- (270:10) -- (300:10) -- (330:10) -- (360:10);

\draw[very thick, blue, fill=lightgray] (0:10) -- (90:10) -- (120:10) -- (240:10) -- (270:10) -- (300:10) -- (0:10);

\node[blue] at (180:4) {\scalebox{1}{$\mathbf{c_1}$}};
\node[blue] at (45:6) {\scalebox{1}{$\mathbf{c_2}$}};
\node[blue] at (330:7.3) {\scalebox{1}{$\mathbf{c_3}$}};

\draw[ultra thick] (0:10) -- (30:10) node[midway, right] {\scalebox{1}{$\mathbf{s_{c_2}}$}};
\draw[ultra thick] (150:10) -- (180:10) node[midway, above left] {\scalebox{1}{$\mathbf{s_{c_1}}$}};
\draw[ultra thick] (300:10) -- (330:10) node[midway, below right] {\scalebox{1}{$\mathbf{s_{c_3}}$}};

\end{tikzpicture}

		\end{center}
		\caption{An inscribed polygon and a possible choice of matching sides.}\label{figureexampletech}
		\end{figure}
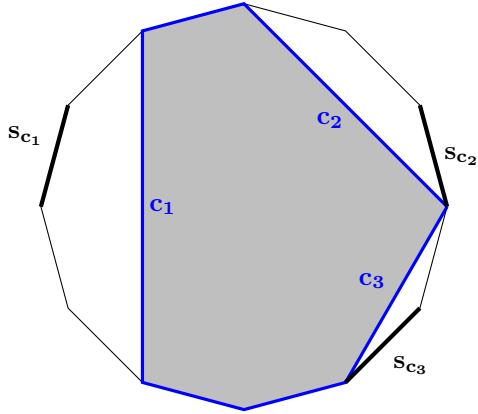
		
		The construction of the previous paragraph gives rise to a pullback morphism (\ref{eqpullback}) that we denote by 
		\begin{equation}\label{eqpsi}
		\psi:\mathcal{C}(E,\delta_E)\cong \mathcal{C}(S',\delta') \rightarrow \mathcal{C}(S,\delta)\ .
		\end{equation}

		\begin{lem}\label{lemtechnical}
		Let~$X\in \mathcal{C}(E,\delta_E)$ be a monomial formed with chords of~$(E,\delta_E)$, and let us denote by the same letter~$X$ the corresponding monomial viewed in~$\mathcal{C}(S,\delta)$. Then~$\psi(X)-X$ can be written as a sum of monomials~$\omega_{c_1}\wedge\cdots\wedge\omega_{c_k}$ for which some chord~$c_i$ crosses a chord in~$E_{\mathrm{chords}}$.
		\end{lem}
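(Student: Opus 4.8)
The plan is to reduce the statement to the pullback formula of Lemma~\ref{lempullbackformula}(1) applied to each chord of the inscribed polygon, and then to bookkeep which monomials on the right-hand side are ``correction terms''. First I would recall that, by construction of the dihedral isomorphism $(E,\delta_E)\cong (S',\delta')$, a chord $c$ of $(E,\delta_E)$ in $E_{\mathrm{chords}}$ corresponds to the matching side $s_c$ of $(S',\delta')$, while a chord of $(E,\delta_E)$ that is also a side of $(S,\delta)$ — no, wait: a chord of $(E,\delta_E)$ is by definition a diagonal of the inscribed polygon, and such a diagonal is in particular a chord of $(S,\delta)$ as well. So for a monomial $X=\omega_{d_1}\wedge\cdots\wedge\omega_{d_k}$ formed with chords $d_j$ of $(E,\delta_E)$, the map $\psi$ is the pullback $f^*$ along the forgetful map $\M_{0,S}\to\M_{0,S'}$, and Lemma~\ref{lempullbackformula}(1) gives
$$\psi(X)=f^*(\omega_{d_1}\wedge\cdots\wedge\omega_{d_k})=\sum_{\{e_j\leadsto d_j\}}\omega_{e_1}\wedge\cdots\wedge\omega_{e_k}\ ,$$
where the sum is over tuples of chords $e_j$ of $(S,\delta)$ contracting to $d_j$ under the contraction of the sides of $(S,\delta)$ not in $S'$.

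Next I would isolate the ``diagonal'' term. Each chord $d_j$ of $(E,\delta_E)$ is literally a chord of $(S,\delta)$, and it visibly contracts to itself: $d_j\leadsto d_j$. Choosing $e_j=d_j$ for every $j$ yields exactly the monomial $X$ viewed inside $\mathcal{C}(S,\delta)$. Hence
$$\psi(X)-X=\sum_{\substack{\{e_j\leadsto d_j\}\\ (e_1,\ldots,e_k)\neq(d_1,\ldots,d_k)}}\omega_{e_1}\wedge\cdots\wedge\omega_{e_k}\ ,$$
so it remains to show that every surviving summand contains some $\omega_{e_i}$ with $e_i$ crossing a chord in $E_{\mathrm{chords}}$. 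Fix such a tuple, and pick an index $i$ with $e_i\neq d_i$. The key geometric observation is this: the chord $d_i$ of $(E,\delta_E)$ separates the polygon $(S,\delta)$ into the two ``sides'' determined by $d_i$, and the endpoints of $d_i$ are vertices of $(S,\delta)$ lying on the boundary of the inscribed polygon; any chord $e_i$ of $(S,\delta)$ contracting to $d_i$ has its endpoints in the two arcs of $\partial(S,\delta)$ obtained by ``fattening'' the two vertices of $d_i$ across the components $S_c$ outside $E$. Since $e_i\neq d_i$, at least one endpoint of $e_i$ is a proper vertex inside one of these fattened arcs, i.e.\ lies strictly inside some component $S_c$ with $c\in E_{\mathrm{chords}}$ adjacent to an endpoint of $d_i$. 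I would then argue by a direct planar/combinatorial inspection that such an $e_i$ must cross the chord $c$: one endpoint of $e_i$ lies strictly on the $S_c$-side of $c$ while the other endpoint lies on the inscribed polygon (or beyond $c$ on the other side), so the segment $e_i$ crosses the diagonal $c$. This gives the required crossing.

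The main obstacle I expect is precisely this last combinatorial step: making rigorous the claim ``$e_i\neq d_i$ forces an endpoint of $e_i$ to be pushed out past some $c\in E_{\mathrm{chords}}$, and therefore $e_i$ crosses $c$''. The subtlety is to handle both endpoints of $d_i$ simultaneously and the possibility that $d_i$ shares an endpoint-vertex with several chords $c\in E_{\mathrm{chords}}$ (the components $S_c$ are attached at the vertices of the inscribed polygon, so a single vertex of $d_i$ may border more than one $S_c$). I would organize this by labeling the vertices of $(S,\delta)$ by which arc between consecutive inscribed-polygon vertices they lie in, noting that $e_i\leadsto d_i$ exactly means the two endpoints of $e_i$ contract to the two endpoints of $d_i$; if the contraction is nontrivial at one endpoint, that endpoint of $e_i$ is ``outside'' the inscribed polygon across exactly one bounding chord $c$, and then a non-crossing chord with the other endpoint on the far side of the inscribed polygon is impossible by planarity. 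Once the crossing is established for this single $i$, the conclusion follows immediately since a monomial $\omega_{e_1}\wedge\cdots\wedge\omega_{e_k}$ inherits the property as soon as one of its factors does. (Monomials in which the $e_j$ repeat or cross completely vanish in $\mathcal{C}(S,\delta)$, so they are harmless and may be discarded at any stage.)
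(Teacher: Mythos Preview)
Your proposal is correct and takes essentially the same approach as the paper: expand $\psi(X)$ via the pullback formula from Lemma~\ref{lempullbackformula}(1), isolate the diagonal term $X$, and show that every remaining summand contains a factor $\omega_{e_i}$ with $e_i$ crossing some boundary chord $c\in E_{\mathrm{chords}}$. The paper streamlines the bookkeeping by first reducing to a single factor $X=\omega_c$ (using that $f^*$ is multiplicative) and to the case $|E_{\mathrm{chords}}|=1$, whereas you treat the general monomial and general $E_{\mathrm{chords}}$ directly; your per-endpoint crossing analysis is exactly the content the paper summarizes as ``the general case being similar''.
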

		
		\begin{proof}
		It is enough to do the proof for a monomial~$X=\omega_c$. We do the proof in the case where~$E_{\mathrm{chords}}$ only contains one element~$c_1$ corresponding to a side~$s_{c_1}\in S$, the general case being similar. The formula for~$\psi(\omega_c)$ is given in Lemma~\ref{lempullbackformula}. If~$c$ and~$c_1$ do not have a vertex in common, then~$\psi(\omega_c)=\omega_c$. Else, let us denote by~$v_1$ the common vertex of~$c$ and~$w$ the other vertex. We use the notation~$c=v_1w$. We then have 
		$$\psi(\omega_c)=\sum_{v}\omega_{vw}\ ,$$ 
		where the sum ranges over the vertices~$v\in S_{c_1}$ that are between~$v_1$ and the first vertex of~$s_{c_1}$. For such vertices~$v$, the chord~$vw$ crosses~$c_1$, except if~$v=v_1$. The claim follows.
		\end{proof}
			
		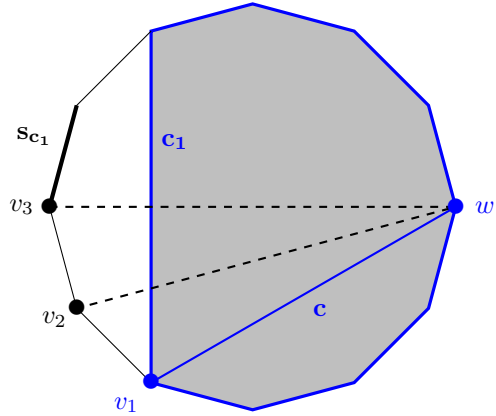
\begin{figure}[h!!]
\begin{tikzpicture}[scale=0.27]
\draw[very thick, blue, fill=lightgray] (0:10) -- (30:10) -- (60:10) -- (90:10) -- (120:10) -- (240:10) -- (270:10) -- (300:10) -- (330:10) -- (0:10);

\draw[thin] (120:10) -- (150:10) -- (180:10) -- (210:10) -- (240:10); 

\node[blue] at (140:5) {\scalebox{1}{$\mathbf{c_1}$}};

\draw[ultra thick] (150:10) -- (180:10) node[midway, above left] {\scalebox{1}{$\mathbf{s_{c_1}}$}};

\node[blue] at (240:10) {\scalebox{1.5}{$\bullet$}};
\node[blue, below left] at (240:10.3) {\scalebox{1}{$v_1$}};

\draw[thick, blue] (0:10) -- (240:10) node[midway, below right] {\scalebox{1}{$\mathbf{c}$}}; 

\node at (210:10) {\scalebox{1.5}{$\bullet$}};
\node[ left] at (212.5:10.3) {\scalebox{1}{$v_2$}};
\node at (180:10) {\scalebox{1.5}{$\bullet$}};
\node[ left] at (180:10.3) {\scalebox{1}{$v_3$}};

\draw[thick, dashed] (180:10) -- (0:10)  -- (210:10); 

\node[blue] at (0:10) {\scalebox{1.5}{$\bullet$}};
\node[blue, right] at (0:10.5) {\scalebox{1}{$w$}};

\end{tikzpicture}

		\begin{center}
		\end{center}
		\caption{Illustration of the proof of Lemma~\ref{lemtechnical}.}\label{figurelemtech}
		\end{figure}
		
		\begin{ex}
		Figure~\ref{figurelemtech} illustrates the proof of Lemma~\ref{lemtechnical}: the inscribed polygon~$(E,\delta_E)$ is shaded pictured in blue. We have~$\psi(\omega_{v_1w})=\omega_{v_1w}+\omega_{v_2w}+\omega_{v_3w}$.
		\end{ex}

	\subsection{Proof of the main result}\label{parmainresult}
	
		We now have all the tools to prove Theorem~\ref{thmmain}.
		
		\begin{proof}[Proof of Theorem~\ref{thmmain}]
		To prove this theorem, we will construct the inverse morphism~$\Psi$. To this aim, let us make some ordering conventions to make the signs explicit. For a dissection~$\di\in\mathsf{Diss}_r(S,\delta)$, we will choose compatible orderings 
		\begin{equation}\label{eqtechordering}
		\di=\{c_1,\ldots,c_r\} \;\textnormal{ and } \; P(\di)=\{p_0,\ldots,p_r\}
		\end{equation}
		 that obey the following constraint. Let~$\widetilde{\tr}$ be the tree obtained by removing the leaves (external vertices) of the tree~$\tr$ corresponding to~$\di$. The chords~$c_i$ label the edges of~$\widetilde{\tr}$ and the polygons~$p_i$ label the vertices of~$\widetilde{\tr}$. We choose the orderings (\ref{eqtechordering}) such that for every~$j=1,\ldots,r-1$, deleting the edges labeled by~$c_i$ for~$i=1,\ldots,j$, only disconnects the vertices~$p_i$ for~$i=0,\ldots,j-1$.
		
		An element of~$\mathrm{gr}^R_r\mathcal{C}(S,\delta)$ can be represented as a sum of elements~$X_0\wedge \omega_{c_1} \wedge X_1\wedge \omega_{c_2} \wedge \cdots \wedge \omega_{c_r} \wedge X_r$, for some dissection~$\di=\{c_1,\ldots,c_r\}\in\mathsf{Diss}_r(S,\delta)$, with~$X_i\in R_0\mathcal{C}(E_i,\delta_i)$. According to the constraint we put on the orderings (\ref{eqtechordering}), the image of such an element by~$\Phi$ is
		\begin{equation}\label{eqproofPhi}
		\Delta_\di(X_0\wedge \omega_{c_1} \wedge X_1\wedge \omega_{c_2} \wedge \cdots \wedge \omega_{c_r} \wedge X_r)= X_0\otimes\cdots\otimes X_r
		\end{equation}
		by repeated applications of Proposition~\ref{propresiduechords}.

		For every~$i=0,\ldots,r$, we let 
		$$\psi_i : \mathcal{C}(E_i,\delta_i) \rightarrow \mathcal{C}(S,\delta)$$
		denote the pullback map (\ref{eqpsi}) defined in the previous paragraph, corresponding to the inscribed polygon~$p_i=(E_i,\delta_i)$ and any choice of matching sides~$s_c$ for~$c\in (E_i)_{\textnormal{chords}}$.
		 
		Let us recall that we have 
		$$R_0\mathcal{C}(\di)= R_0\mathcal{C}(E_0,\delta_0)\otimes\cdots\otimes R_0\mathcal{C}(E_r,\delta_r)\ .$$
		We then define 
		$$\Psi_{\mathfrak{d}}:R_0\mathcal{C}(\mathfrak{d})\rightarrow \mathrm{gr}_r^R\mathcal{C}(S,\delta)$$
		by the formula
		$$\Psi_{\mathfrak{d}}(X_0\otimes\cdots\otimes X_r) := \psi_0(X_0)\wedge \omega_{c_1} \wedge \psi_1(X_1)\wedge \omega_{c_2} \wedge \cdots \wedge \omega_{c_r} \wedge \psi_r(X_r)\ .$$

		Let us first prove that~$\Psi_{\mathfrak{d}}$ is well-defined. According to Lemma~\ref{lempullbackformula}, each map~$\psi_i$ sends~$R_0\mathcal{C}(E_i,\delta_i)$ to~$R_0\mathcal{C}(S,\delta)$, hence the term~$\psi_0(X_0)\wedge\cdots\wedge \psi_r(X_r)$ is in~$R_0\mathcal{C}(S,\delta)$. Since the cardinality of~$\mathfrak{d}$ is~$r$, multiplying by~$\omega_{c_1}\wedge\cdots\wedge\omega_{c_r}$ gives an element of~$R_r\mathcal{C}(S,\delta)$.

 	With the same abuse of notation as in Lemma~\ref{lemtechnical}, we claim that we have
		\begin{equation}\label{eqproofPsi}
		\Psi_\di(X_0\otimes\cdots\otimes X_r) = X_0\wedge \omega_{c_1} \wedge X_1\wedge \omega_{c_2} \wedge \cdots \wedge \omega_{c_r} \wedge X_r \;\mod R_{r-1}\mathcal{C}(S,\delta)\ .
		\end{equation}
		
		We do the proof of this equality in the case~$r=1$ and~$\mathfrak{d}=\{c\}$ a chord, the general case being similar and left to the reader. Let us choose monomials~$X_0\in R_0\mathcal{C}(E_0,\delta_0)$,~$X_1\in R_0\mathcal{C}(E_1,\delta_1)$ with zero residual chord. We want to prove the equality
		$$\Psi_{\{c\}}(X_0\otimes X_1)=X_0\wedge\omega_c\wedge X_1 \;\mod R_0\mathcal{C}(S,\delta)\ .$$
		According to Lemma~\ref{lemtechnical}, we may write
		$$\psi_1(X_0)=X_0+\sum_{i_0}X_0^{(i_0)} \textnormal{ and } \psi_1(X_1)=X_1+\sum_{i_1}X_1^{(i_1)}\ ,$$
		where each monomial~$X_0^{(i_0)}$ and~$X_1^{(i_1)}$ has zero residual chord and contains a symbol~$\omega_{c'}$ with~$c'$ crossing~$c$. We can then write the difference~$\Psi_{\{c\}}(X_0\otimes X_1)-X_0\wedge\omega_c\wedge X_1$ as
		$$\sum_{i_0}X_0^{(i_0)}\wedge\omega_c\wedge X_1 + \sum_{i_1}X_0\wedge\omega_c\wedge X_1^{(i_1)} + \sum_{i_0,i_1}X_0^{(i_0)}\wedge\omega_c\wedge X_1^{(i_1)}\ .$$
		All the monomials appearing in the above expression have zero residual chord, hence the result. Equations (\ref{eqproofPhi}) and (\ref{eqproofPsi}) imply that~$\Psi$ is the inverse for~$\Phi$.
		\end{proof}

		\begin{thm}\label{thmfree}
		The dihedral gravity cooperad is cofree. More precisely, it is (non-canonically) isomorphic to the cofree dihedral cooperad on the dihedral module
			$$(S,\delta)\mapsto \det(S)\otimes H^{\bullet+n-3}(\Mdelta_{0,S})(-1)\ .$$
		\end{thm}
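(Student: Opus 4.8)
The plan is to derive this from the freeness criterion of Proposition~\ref{propfreefiltration}. I would apply that proposition to the desuspension $\mathcal{C}$ of the dihedral gravity cooperad, whose underlying $\mathsf{Dih}$-module is $\mathcal{C}(S,\delta)=H^{\bullet-1}(\M_{0,S})(-1)$, equipped with the residual filtration $R$. This filtration is finite in every arity because each $\mathcal{C}(S,\delta)$ is finite-dimensional, so only hypotheses (a) and (b) of Proposition~\ref{propfreefiltration} need to be verified. Hypothesis (a) --- that for a dissection $\di$ of cardinality $k$ the decomposition map $\Delta_\di$ sends $R_r\mathcal{C}(S,\delta)$ into $R_{r-k}\mathcal{C}(\di)$ --- is exactly Lemma~\ref{lemresiduefiltration}; one should only note that the filtration on $\mathcal{C}(\di)$ appearing there, given by ``at most $r-k$ residual chords'', coincides with the extension $\sum_{i_0+\cdots+i_k=r-k}R_{i_0}\mathcal{C}(p_0)\otimes\cdots\otimes R_{i_k}\mathcal{C}(p_k)$ fixed before Proposition~\ref{propfreefiltration}, since a product monomial on the sub-polygons of $\di$ has a residual chord in a given $p_j$ exactly when its $j$-th factor does. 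Hypothesis (b) --- that the iterated decomposition $\mathrm{gr}_r^R\mathcal{C}(S,\delta)\xrightarrow{\bigoplus\Delta_\di}\bigoplus_{\di\in\mathsf{Diss}_r(S,\delta)}R_0\mathcal{C}(\di)$ is an isomorphism --- is precisely Theorem~\ref{thmmain}.

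Once (a) and (b) are in place, Proposition~\ref{propfreefiltration} yields that $\mathcal{C}$ is a cofree dihedral cooperad, that $R$ is its coradical filtration, and that any splitting of the inclusion $R_0\mathcal{C}\hookrightarrow\mathcal{C}$ gives an isomorphism $\mathcal{C}\stackrel{\cong}{\longrightarrow}\mathbb{DT}^c(R_0\mathcal{C})$. The dihedral gravity cooperad $\Grav$ is obtained from $\mathcal{C}$ by an operadic suspension, an invertible operation which in particular preserves cofreeness; thus $\Grav$ is cofree, which is statement (ii) of Theorem~\ref{propequivalence}. Its concluding sentence then delivers the ``more precisely'' part: a non-canonical isomorphism between $\Grav$ and the cofree dihedral cooperad on $(S,\delta)\mapsto\det(S)\otimes H^{\bullet+n-3}(\Mdelta_{0,S})(-1)$. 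Concretely, since $R$ is the coradical filtration, $R_0\mathcal{C}(S,\delta)=F_0\mathcal{C}(S,\delta)=\bigcap_c\ker(\Delta_{\{c\}})$, the intersection ranging over the chords $c$ of $(S,\delta)$; by the degeneration of the residue spectral sequence (Propositions~\ref{lemdegen}, \ref{propequivsscobar} and~\ref{propfreecobar}) this kernel is isomorphic to $H^{\bullet-1}(\Mdelta_{0,S})(-1)$, and the operadic suspension turns it into the asserted module.

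The only genuinely nontrivial ingredient is Theorem~\ref{thmmain}, and I would treat it as a black box here: its proof --- building an explicit inverse $\Psi$ to $\Phi$ out of the pullback maps $\psi_i$ and the technical Lemma~\ref{lemtechnical} --- is where the real work lies. The remaining points are routine: the bookkeeping of signs and Tate twists in passing between $\mathcal{C}$ and its suspension $\Grav$, and the elementary check that the two a priori different extensions of the residual filtration to $\mathcal{C}(\di)$ coincide.
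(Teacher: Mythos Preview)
Your proposal is correct and follows exactly the paper's own proof: apply Proposition~\ref{propfreefiltration} to the desuspension $\mathcal{C}$ with the residual filtration, invoking Lemma~\ref{lemresiduefiltration} for hypothesis~(a) and Theorem~\ref{thmmain} for hypothesis~(b), then pass to $\Grav$ by operadic suspension and use the last statement of Theorem~\ref{propequivalence} for the identification of the cogenerators. Your added remarks (the check that the two extensions of $R$ to $\mathcal{C}(\di)$ agree, and the explicit identification $R_0\mathcal{C}(S,\delta)\cong H^{\bullet-1}(\Mdelta_{0,S})(-1)$ via the residue spectral sequence) are helpful elaborations but do not change the route.
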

		
		\begin{proof}
		It is a consequence of Proposition~\ref{propfreefiltration}, using Lemma~\ref{lemresiduefiltration} and Theorem~\ref{thmmain}, which imply, after operadic suspension, the corresponding statements for the dihedral gravity cooperad. The last statement follows from the last statement of Theorem \ref{propequivalence}.
		\end{proof}
		
		\begin{rem}
		In \cite{dotsenkofreeness}, Dotsenko built a general a criterion to prove the freeness of the nonsymmetric operad underlying an operad in terms of Gr\"{o}bner bases \cite{dotsenkokhoroshkingroebner}. It would be interesting to know whether this criterion can give an alternate proof of Theorem \ref{thmfree}.
		\end{rem}
		
	\subsection{Consequences for Brown's moduli spaces}
	
		We gather here some consequences of Theorem~\ref{thmfree} on the geometry of the moduli spaces~$\Mdelta_{0,S}$.
	
		\begin{coro}\label{coropurity}
		For every integer~$k$ and every dihedral set~$(S,\delta)$, the cohomology group~$H^k(\Mdelta_{0,S})$ is pure Tate of weight~$2k$.		
		\end{coro}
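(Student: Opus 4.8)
The plan is to read this off from the equivalence already established in Theorem~\ref{propequivalence}, combined with the cofreeness proved in Theorem~\ref{thmfree}. Indeed, Theorem~\ref{propequivalence} asserts that the assertion ``for every integer~$k$ and every dihedral set~$(S,\delta)$, the cohomology group~$H^k(\Mdelta_{0,S})$ is pure Tate of weight~$2k$'' is equivalent to the assertion that the dihedral gravity cooperad is cofree, and the latter is exactly Theorem~\ref{thmfree}. So strictly speaking there will be nothing left to do; the role of this subsection is only to record the geometric consequence.

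To keep the argument self-contained, I would nonetheless recall the chain of implications, which also makes transparent why \emph{purity} is the natural statement. The residue spectral sequence~(\ref{eqssres}) converges to~$H^{p+q}(\Mdelta_{0,S})$; by Proposition~\ref{lemdegen} it degenerates at the second page, with~$E_2^{p,q}$ pure Tate of weight~$2q$ (this uses the purity of~$H^\bullet(\M_{0,S})$ from Theorem~\ref{thmpurityM} together with the K\"{u}nneth formula). Consequently~$\mathrm{gr}_A^p H^{p+q}(\Mdelta_{0,S}) \cong E_2^{p,q}$. By Proposition~\ref{propequivsscobar} the rows~$E_1^{\bullet,q}$ together form, up to a Tate twist, the dihedral cobar construction of the (desuspension of the) dihedral gravity cooperad, so Proposition~\ref{propfreecobar} shows that the cofreeness from Theorem~\ref{thmfree} forces each such row to be exact away from~$\bullet=0$, that is~$E_2^{p,q}=0$ for~$p>0$. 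Hence~$H^k(\Mdelta_{0,S}) \cong E_2^{0,k}$ is pure Tate of weight~$2k$, as claimed.

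The only point requiring a little care is to keep track of the operadic suspension/desuspension relating~$\Grav$ and the~$\mathsf{Dih}$-module~$\mathcal{C}$, exactly as in the proof of Theorem~\ref{thmfree}. There is no genuine obstacle here: all the substantive content has already been absorbed into Theorem~\ref{thmmain} (the residual filtration is the coradical filtration) and Proposition~\ref{propfreefiltration}, so the corollary is simply their geometric shadow via Theorem~\ref{propequivalence}.
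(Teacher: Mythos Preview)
Your proof is correct and follows exactly the paper's approach: the paper's proof is the single sentence ``This follows from Theorem~\ref{thmfree} and Theorem~\ref{propequivalence},'' and your first paragraph is precisely that. The additional unpacking you give (the degeneration argument and the exactness of the rows of~$E_1$) simply retraces the proof of Theorem~\ref{propequivalence}, so it adds clarity but no new content.
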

		
		\begin{proof}
		This follows from Theorem~\ref{thmfree} and Theorem~\ref{propequivalence}.
		\end{proof}
		
		\begin{coro}\label{coroinj}
		For every integer~$k$ and every dihedral set~$(S,\delta)$, the natural map~$H^k(\Mdelta_{0,S})\rightarrow H^k(\M_{0,S})$ is injective and fits into a long exact sequence
		\begin{equation}\label{eqlongexseccohomology}
		0\rightarrow H^k(\Mdelta_{0,S}) \rightarrow H^k(\M_{0,S}) \rightarrow \bigoplus_{\di\in\mathsf{Diss}_1(S,\delta)}H^{k-1}(\M(\di))(-1)\rightarrow \bigoplus_{\di\in\mathsf{Diss}_2(S,\delta)} H^{k-2}(\M(\di))(-1) \rightarrow \cdots \ .
		\end{equation}
		\end{coro}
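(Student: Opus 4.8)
The plan is to read the exact sequence directly off the residue spectral sequence~(\ref{eqssres}), exactly as in the proof of Theorem~\ref{propequivalence}, now that purity has been established. The content of the corollary is nothing more than the degree-by-degree form of the equivalence proved there.

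First I would invoke Corollary~\ref{coropurity}: the mixed Hodge structure on $H^k(\Mdelta_{0,S})$ is pure Tate of weight $2k$. Combining this with the degeneration at the second page (Proposition~\ref{lemdegen}) and the weight computation in its proof — namely that $E_2^{p,q}$ is pure Tate of weight $2q$ — one gets that $\mathrm{gr}^p_A H^k(\Mdelta_{0,S}) = E_2^{p,k-p}$ is pure Tate of weight $2(k-p)$, where $A$ denotes the (by mixed Hodge substructures) filtration coming from the spectral sequence. Since $H^k(\Mdelta_{0,S})$ is pure of weight $2k$, all these graded pieces with $p>0$ must vanish; this is precisely the implication "$(i)\Rightarrow E_2^{p,q}=0$ for $p>0$" already recorded in the proof of Theorem~\ref{propequivalence}.

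This vanishing has two consequences. On the one hand, the filtration $A$ on $H^k(\Mdelta_{0,S})$ is trivial, so the edge morphism of the spectral sequence — which, by the construction in Proposition~\ref{propresspectralgen}, is the restriction map $H^k(\Mdelta_{0,S})\to H^k(\M_{0,S})$ attached to the open immersion $\M_{0,S}\hookrightarrow\Mdelta_{0,S}$ — identifies $H^k(\Mdelta_{0,S})$ with $E_2^{0,k}=\ker\big(d_1\colon E_1^{0,k}\to E_1^{1,k}\big)$; in particular it is injective. On the other hand, $E_2^{p,k}=0$ for $p>0$ says exactly that the $k$-th row $E_1^{\bullet,k}$ of the first page,
$$0\to H^k(\M_{0,S})\to \bigoplus_{\di\in\mathsf{Diss}_1(S,\delta)}H^{k-1}(\M(\di))(-1)\to \bigoplus_{\di\in\mathsf{Diss}_2(S,\delta)}H^{k-2}(\M(\di))(-1)\to\cdots\ ,$$
is exact except at its leftmost term, the differentials being the sums of residue morphisms as in~(\ref{eqssres}). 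Splicing the identification $H^k(\Mdelta_{0,S})\cong\ker(d_1)$ in front of this complex yields the long exact sequence~(\ref{eqlongexseccohomology}).

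I do not expect a genuine obstacle here: the only points requiring attention are bookkeeping, namely checking that the edge morphism of~(\ref{eqssres}) is the geometric restriction map and that its $d_1$ differentials coincide with the residue/decomposition maps displayed in~(\ref{eqlongexseccohomology}). Both are built into Proposition~\ref{propresspectralgen} and the dictionary of Proposition~\ref{propequivsscobar}, so the corollary follows formally from Corollary~\ref{coropurity} together with Propositions~\ref{lemdegen} and~\ref{propresspectralgen}.
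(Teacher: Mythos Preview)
Your proposal is correct and follows essentially the same route as the paper. The paper's proof is terser: it invokes Theorem~\ref{thmfree} (cofreeness) and refers back to the proof of Theorem~\ref{propequivalence} for the spectral-sequence bookkeeping, while you invoke Corollary~\ref{coropurity} (purity) and unpack that bookkeeping explicitly; since purity and cofreeness are the two equivalent statements of Theorem~\ref{propequivalence}, these are the same argument entered from opposite ends.
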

		
		\begin{proof}
		By Theorem~\ref{thmfree} and the proof of Theorem~\ref{propequivalence}, we get an injective map~$H^k(\Mdelta_{0,S})\rightarrow E_1^{0,k}=H^k(\M_{0,S})$. By the construction of the residue spectral sequence, this map is indeed the one induced in cohomology by the inclusion~$\M_{0,S}\hookrightarrow\Mdelta_{0,S}$.
		\end{proof}
		
		We note that the image of the natural map~$H^{\bullet-1}(\Mdelta_{0,S})(-1)\hookrightarrow \mathcal{C}(S,\delta)$ is exactly the subspace~$R_0\mathcal{C}(S,\delta)$.\\
		
		Let us recall that the Betti numbers of the spaces~$\M_{0,n}$ are given by the Poincar\'{e} polynomials
		$$\sum_{k=0}^{n-3}b_k(\M_{0,n})\, x^k=\prod_{j=2}^{n-2}(x-j)\ .$$	
		By taking the Euler characteristic of the exact sequence (\ref{eqlongexseccohomology}), one may thus derive a formula for the Betti numbers of the spaces~$\Mdelta_{0,n}$, as follows.
		
		\begin{coro}\cite{bergstrombrown}\label{corobetti}
		The generating series
		$$f(x,t)=x-\sum_{n\geq 3}\left(\sum_{k=0}^{n-3}(-1)^k\,b_k(\M_{0,n})\,t^{n-3-k}\right)x^{n-1}$$
		$$f^{\delta}(x,t)=x+\sum_{n\geq 3} \left(\sum_{k=0}^{n-3}(-1)^k\,b_k(\Mdelta_{0,n})\,t^{n-3-k}\right)x^{n-1}$$
		are inverse one to another :~$f(f^\delta(x,t),t)=f^\delta(f(x,t),t)=x$.
		\end{coro}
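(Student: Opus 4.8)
The plan is to extract the stated identity from the long exact sequence~(\ref{eqlongexseccohomology}) of Corollary~\ref{coroinj}, whose existence already encodes the purity of the cohomology of Brown's moduli spaces (Corollary~\ref{coropurity}). For $n\geq 3$ write
$$\chi_n(t):=\sum_{k=0}^{n-3}(-1)^k b_k(\M_{0,n})\,t^{n-3-k}\ ,\qquad \chi^\delta_n(t):=\sum_{k=0}^{n-3}(-1)^k b_k(\Mdelta_{0,n})\,t^{n-3-k}\ ,$$
so that $f(x,t)=x-\sum_{n\geq 3}\chi_n(t)\,x^{n-1}$ and $f^\delta(x,t)=x+\sum_{n\geq 3}\chi^\delta_n(t)\,x^{n-1}$. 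For a smooth variety $X$ of dimension $d$ with finite-dimensional rational cohomology, set $Q_X(t):=\sum_j(-1)^j b_j(X)\,t^{d-j}$; the K\"unneth formula makes $X\mapsto Q_X$ multiplicative for cartesian products, and $Q_{\M_{0,m}}=\chi_m$.

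First I would fix an $n$-element set $S$ equipped with a dihedral structure $\delta$ and take, for each fixed cohomological degree $k$, the alternating sum of dimensions along~(\ref{eqlongexseccohomology}). That sequence is finite and exact, and the Tate twists do not change dimensions, so this gives
$$b_k(\Mdelta_{0,S})=\sum_{p\geq 0}(-1)^p\sum_{\di\in\mathsf{Diss}_p(S,\delta)}\dim H^{k-p}(\M(\di))\ ,$$
the term $p=0$ being $H^k(\M_{0,S})$. Multiplying by $(-1)^k\,t^{n-3-k}$, summing over $k$, and using that a dissection $\di$ with $p$ chords cuts $(S,\delta)$ into sub-polygons with $\dim\M(\di)=n-3-p$, the signs $(-1)^p$ cancel; combined with $\M(\di)\cong\prod_{p\in P(\di)}\M_{0,E(p)}$ and the multiplicativity of $Q$, this yields the key identity
$$\chi^\delta_n(t)=\sum_{\di\in\mathsf{Diss}(S,\delta)}\ \prod_{p\in P(\di)}\chi_{|E(p)|}(t)\ .$$

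Next I would recognise this as exactly the coefficient-wise form of the functional equation $f(f^\delta(x,t),t)=x$. Writing $u:=f^\delta(x,t)$, that equation amounts to $\sum_{n\geq 3}\chi^\delta_n(t)\,x^{n-1}=\sum_{m\geq 3}\chi_m(t)\,u^{m-1}$ with $u=x+\sum_{n\geq 3}\chi^\delta_n(t)\,x^{n-1}$, and I would prove the induced recursion for the $\chi^\delta_n$ by induction on $n$, using the bijection between dissections and dihedral trees (Lemma~\ref{lem:Diss-Tree}). Concretely: fix one side of the $n$-gon $(S,\delta)$ and let $p_0\in P(\di)$ be the sub-polygon containing it, with $m\geq 3$ sides; reading $p_0$ off from the fixed side, its other $m-1$ sides are cyclically ordered, each being either an original side of the $n$-gon (carrying no sub-dissection, and contributing the linear term $x$ of $u$) or a chord, beyond which sits a smaller polygon rooted at that chord and carrying a dissection of its own. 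Each ordered tuple of the corresponding sizes is realised by exactly one such dissection of the $n$-gon, and the inductive hypothesis identifies the total weight of the sub-dissection beyond the $i$-th chord, of size $n_i$, with $\chi^\delta_{n_i}(t)$; collecting terms reproduces the expansion of $\sum_{m\geq 3}\chi_m\,u^{m-1}$. The base case is $\chi^\delta_3=\chi_3=1$, since $\M_{0,3}=\Mdelta_{0,3}=\{*\}$. Finally, $f$ and $f^\delta$ are formal power series in $x$ of the form $x+O(x^2)$ with coefficients in $\Q[t]$, so a one-sided compositional inverse is automatically two-sided, whence $f^\delta(f(x,t),t)=x$ as well.

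The Euler-characteristic reduction and the bookkeeping with $Q$ are routine; the step I expect to require care is the last one, the combinatorial dictionary between polygon dissections and nested substitution --- singling out the \enquote{outermost} sub-polygon $p_0$, treating the original-side/chord dichotomy with the correct signs and exponents, and checking that each composition of sub-polygon sizes around $p_0$ arises from a unique dissection. (Alternatively, this last step is the classical description of the compositional inverse of $x-\sum_m a_m x^{m-1}$ as a generating function over polygon dissections weighted by $\prod_{p}a_{|E(p)|}$, which could be invoked directly.)
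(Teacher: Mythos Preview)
Your proposal is correct and follows exactly the route the paper indicates: the paper's entire argument is the single sentence ``By taking the Euler characteristic of the exact sequence~(\ref{eqlongexseccohomology}), one may thus derive a formula for the Betti numbers of the spaces~$\Mdelta_{0,n}$'', and you have carried this out in full, including the standard combinatorial translation of the resulting dissection-sum into the compositional-inverse identity. Your bookkeeping with the polynomials~$Q_X$ and the rooting-at-a-fixed-side bijection (equivalently, the classical Lagrange-type description of the inverse of $x-\sum_m a_m x^{m-1}$ as a generating function over polygon dissections) is precisely what the paper leaves implicit.
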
		 
		 
		 We note that in~\cite[Section 3]{bergstrombrown}, the injectivity statement of Corollary~\ref{coroinj} is used but not proved. 		
		 
		\begin{coro}\label{coroformality}
		For every dihedral set~$(S,\delta)$, Brown's moduli space~$\Mdelta_{0,S}$ is a formal topological space.
		\end{coro}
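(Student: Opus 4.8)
The plan is to deduce formality from purity of the mixed Hodge structure on $H^\bullet(\Mdelta_{0,S})$ (Corollary~\ref{coropurity}), using the standard principle that purity forces formality for smooth complex algebraic varieties. More precisely, I would invoke the theory of mixed Hodge structures on the rational homotopy type: by the work of Morgan and Hain, the de Rham algebra of a smooth complex algebraic variety $X$ (modelled, say, on Navarro Aznar's Thom--Whitney functor applied to a smooth compactification with normal crossing boundary, which exists here by Theorem~\ref{thmbrownaffine}) carries a functorial mixed Hodge structure for which the differential and product are morphisms. The key input is that the weight grading on such a model provides a bigrading of a commutative differential graded algebra quasi-isomorphic to $A_{PL}(\Mdelta_{0,S})$, in which the cohomology $H^k$ sits in weight exactly $2k$ by Corollary~\ref{coropurity}.

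The main steps would be as follows. First, fix the smooth compactification $\Mdelta_{0,S}\subset\Mbar_{0,S}$ with normal crossing boundary divisor $\partial\Mdelta_{0,S}$ (available by Theorem~\ref{thmbrownaffine}), and take the associated mixed Hodge complex of commutative differential graded algebras computing the rational homotopy type; call a finite-type model $E^\bullet$. Second, observe that the weight filtration $W_\bullet$ on $E^\bullet$ splits multiplicatively after passing to an associated graded (or, equivalently, pass to the underlying bigraded model via the decomposition theorem for mixed Hodge structures over $\Q$, which are direct sums in each weight). Third, use purity: since $H^k(\Mdelta_{0,S})$ is pure of weight $2k$, the only nonzero weight-graded pieces of $H^\bullet(E^\bullet)$ are the ``diagonal'' ones $W_{2k}/W_{2k-1}$ in degree $k$. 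Fourth, the two projections $E^\bullet \twoheadleftarrow W_\text{diag}E^\bullet \hookrightarrow \gr^W_\text{diag}E^\bullet$ onto the part of the weight filtration matching the cohomological degree give a zig-zag of quasi-isomorphisms of commutative differential graded algebras connecting $E^\bullet$ to its cohomology $H^\bullet(\Mdelta_{0,S})$ equipped with zero differential; this is exactly the assertion that $\Mdelta_{0,S}$ is formal. This is the same mechanism used, e.g., by Deligne--Griffiths--Morgan--Sullivan for compact K\"ahler manifolds and by many authors for hyperplane arrangement complements, of which $\M_{0,S}$ is an example.

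The main obstacle is a bookkeeping one rather than a conceptual one: making precise which mixed Hodge structure on the rational homotopy type one uses, and checking that purity of the \emph{cohomology} (not of the whole minimal model a priori) suffices to run the ``purity implies formality'' argument. The cleanest route is to cite the general statement --- purity of $H^\bullet(X)$ for $X$ smooth implies formality --- which follows from Morgan's mixed Hodge theory on the minimal model together with the fact that a pure-cohomology algebra over $\Q$ admits a bigrading that the minimal model inherits; alternatively one can quote that complements of normal crossing configurations realizing the combinatorics at hand are formal. I would phrase the proof as: \emph{By Corollary~\ref{coropurity}, $H^k(\Mdelta_{0,S})$ is pure of weight $2k$ for all $k$; since $\Mdelta_{0,S}$ is a smooth complex algebraic variety, the general principle that purity of the cohomology implies formality} --- see Morgan and the references therein --- \emph{shows that $\Mdelta_{0,S}$ is formal.}
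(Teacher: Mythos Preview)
Your proposal is correct and in fact matches the first sentence of the paper's proof: the paper begins by noting that formality follows from Corollary~\ref{coropurity} together with a general ``purity implies formality'' theorem (it cites \cite[Theorem~2.5]{dupontformality}, which packages exactly the Morgan/Hain-type argument you sketch).

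The paper then supplements this with a second, more direct argument that you did not give. Using that $\Mdelta_{0,S}$ is smooth affine, it compares two short exact sequences: the cohomological one from Corollary~\ref{coroinj}, and the sequence of global holomorphic forms expressing that a logarithmic form on $(\Mdelta_{0,S},\partial\Mdelta_{0,S})$ is regular on $\Mdelta_{0,S}$ if and only if all its residues along the boundary divisors vanish. The middle and right vertical maps, sending the cohomology class $\omega_c$ to the actual differential form $\omega_c$, are quasi-isomorphisms (this uses Brown's presentation of $H^\bullet(\M_{0,S})$ and affineness of $\Mdelta_{0,S}$). A diagram chase then produces an explicit quasi-isomorphism of commutative differential graded algebras $H^\bullet(\Mdelta_{0,S})\to\Omega^\bullet(\Mdelta_{0,S})$. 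Your approach is the clean black-box one; the paper's direct proof buys an explicit formality map realized by honest global holomorphic forms, and relies only on the specific geometry already developed in the paper rather than on the general mixed Hodge theory of minimal models.
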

		
		\begin{proof}
		It is a consequence of Corollary~\ref{coropurity} and~\cite[Theorem 2.5]{dupontformality}. A more direct proof goes as follows. Recall that~$\Mdelta_{0,S}$ is a smooth affine complex variety. We denote by~$\Omega^\bullet(\Mdelta_{0,S})$ the complex of global holomorphic differential forms on~$\Mdelta_{0,S}$, and by~$\Omega^\bullet(\Mdelta_{0,S},\log\partial\Mdelta_{0,S})$ the complex of global holomorphic logarithmic differential forms on~$\Mdelta_{0,S}$ along~$\partial\Mdelta_{0,S}$. Let us recall that the morphism~$H^\bullet(\M_{0,S}) \rightarrow \Omega^\bullet(\Mdelta_{0,S},\log\partial\Mdelta_{0,S})$ which maps the class of~$\omega_c$ to~$\omega_c$ is well-defined and is a quasi-isomorphism.
		We consider the commutative diagram
		$$
		\xymatrix{
		0 \ar[r] & H^\bullet(\Mdelta_{0,S}) \ar[r] & H^\bullet(\M_{0,S}) \ar[r]\ar[d] & \bigoplus_{\mathfrak{d}\in \mathrm{Diss}(S,\delta)} H^{\bullet-1}(\M(\mathfrak{d}))(-1) \ar[d] \\
		0 \ar[r] & \Omega^\bullet(\Mdelta_{0,S}) \ar[r] & \Omega^\bullet(\Mdelta_{0,S},\log\partial\Mdelta_{0,S}) \ar[r] & \bigoplus_{\mathfrak{d}\in \mathrm{Diss}(S,\delta)} \Omega^{\bullet-1}(\Mdelta(\mathfrak{d}),\log\partial\Mdelta(\mathfrak{d}))
		}$$
		where all arrows are morphisms of cochain complexes and where the vertical arrows are quasi-isomorphisms. The first row is exact by Corollary~\ref{coroinj}; the exactness of the second row follows from the fact that a logarithmic differential form on~$\Mdelta_{0,S}$ along~$\partial\Mdelta_{0,S}$ is regular on~$\Mdelta_{0,S}$ if and only if its residue along each~$\Mdelta(\mathfrak{d})$ is zero. Completing the diagram gives the following quasi-isomorphism, hence the result:
		$$H^\bullet(\Mdelta_{0,S}) \rightarrow \Omega^\bullet(\Mdelta_{0,S})\, .$$
		\end{proof}
		
	\subsection{The dihedral Lie operad is free}		
		
		As a corollary of Theorem~\ref{thmfree} and in view of Theorem~\ref{thmlie}, we get a geometric proof of a dihedral enhancement of the theorem of Salvatore and Tauraso about the nonsymmetric Lie operad~\cite{salvatoretauraso}.
		
		\begin{coro}\label{coroliefree}
		The dihedral Lie operad is free. More precisely, it is (non-canonically) isomorphic to the free dihedral operad on the dihedral module
		$$(S,\delta)\mapsto \det(S)\otimes H_{n-3}(\Mdelta_{0,S})(1).$$
		\end{coro}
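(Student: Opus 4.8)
The plan is to deduce the statement from Theorem~\ref{thmfree} by passing to linear duals and then restricting to homological degree zero. The first step is to observe that $\mathbb{DT}$ and $\mathbb{DT}^c$ have the same underlying endofunctor of $\mathsf{Dih}\text{-}\mathsf{Mod}$ (by Lemma~\ref{lem:Diss-Tree}, $\mathbb{DT}^c\mathcal{M}(S,\delta)=\bigoplus_{\di}\mathcal{M}(\di)=\mathbb{DT}\mathcal{M}(S,\delta)$), and that all graded vector spaces occurring here are finite-dimensional in each arity and each degree: the posets $\mathsf{Diss}(S,\delta)$ are finite, and $H^{\bullet}(\M_{0,S})$ and $H^{\bullet}(\Mdelta_{0,S})$ are finite-dimensional. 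Hence graded linear duality exchanges the comonad structure with the monad structure, so that the graded dual of a cofree dihedral cooperad $\mathbb{DT}^c(\mathcal{X})$ is the free dihedral operad $\mathbb{DT}(\mathcal{X}^{\vee})$ on the graded dual dihedral module. Dualizing the isomorphism of Theorem~\ref{thmfree}, and using $\det(S)^{\vee}\cong\det(S)$ and $\Q(-1)^{\vee}\cong\Q(1)$, one gets that the dihedral gravity operad is free, and (non-canonically, according to the choice of splitting in Theorem~\ref{thmfree}) isomorphic as a graded dihedral operad to the free dihedral operad on the dihedral module
$$\mathcal{X}^{\vee}\ :\ (S,\delta)\longmapsto \det(S)\otimes H_{\bullet+n-3}(\Mdelta_{0,S})(1)\ .$$

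Next I would restrict both sides of this isomorphism to homological degree zero. On the one hand, the dihedral gravity operad is obtained from the cyclic gravity operad by the forgetful functor $\mathsf{Cyc}\text{-}\mathsf{Op}\to\mathsf{Dih}\text{-}\mathsf{Op}$, which leaves the underlying graded $\mathsf{Bij}$-modules unchanged; hence its homological degree zero part is the dihedral operad underlying the degree zero sub-operad of the cyclic gravity operad, which is the cyclic Lie operad by Theorem~\ref{thmlie}, that is, the dihedral Lie operad. On the other hand, since $\Mdelta_{0,S}$ is affine of dimension $n-3$ (Theorem~\ref{thmbrownaffine}), $H_k(\Mdelta_{0,S})$ vanishes for $k>n-3$, so the dihedral module $\mathcal{X}^{\vee}$ is concentrated in homological degrees $-(n-3)\leq\bullet\leq 0$, with degree zero part $(S,\delta)\mapsto\det(S)\otimes H_{n-3}(\Mdelta_{0,S})(1)$.

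The remaining point is formal. In the free dihedral operad $\mathbb{DT}(\mathcal{X}^{\vee})$, the summand indexed by a dihedral tree $\tr$ is a tensor product $\bigotimes_{v\in V(\tr)}\mathcal{X}^{\vee}(E(v),\delta(v))$ whose homological degree is the sum of the degrees of its factors; since every factor lies in non-positive degree, such a summand meets homological degree zero exactly when each of its factors does. Summing over dihedral trees shows that the homological degree zero part of $\mathbb{DT}(\mathcal{X}^{\vee})$ coincides with $\mathbb{DT}$ applied to the homological degree zero part of $\mathcal{X}^{\vee}$, compatibly with the operad structures (tree grafting being additive in degrees). Combining this with the two identifications of the previous paragraph yields the asserted (non-canonical) isomorphism between the dihedral Lie operad and the free dihedral operad on $(S,\delta)\mapsto\det(S)\otimes H_{n-3}(\Mdelta_{0,S})(1)$.

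I expect the only genuine subtlety to be the dualization step: one must check carefully that, in this graded and locally finite-dimensional setting, linear duality really does interchange cofree dihedral cooperads and free dihedral operads, and one must track the behaviour of the operadic suspension, the $\det$-twists and the Tate twists so that the generating module comes out exactly as stated. Once the grading conventions are pinned down, the degree-zero truncation argument is routine.
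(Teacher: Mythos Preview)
Your argument is correct and is precisely the intended one: the paper derives the corollary directly from Theorem~\ref{thmfree} together with Theorem~\ref{thmlie}, and your write-up spells out exactly that derivation (dualize, then take the degree-zero part, using the non-positivity of the generating module). The only points you flag as subtle---finite-dimensionality for dualization and the bookkeeping of $\det$- and Tate-twists---are indeed routine here, so there is nothing to add.
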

		
\begin{rem}		The equality between the top Betti number of~$\Mdelta_{0,n}$ and the number of generators of the non-symmetric Lie operad in arity~$n$ in~\cite{salvatoretauraso} was already noticed in~\cite{bergstrombrown}.
\end{rem}

\bibliographystyle{alpha}
\bibliography{biblio}

\end{document}